 \newcommand\Pn[1]{{\bf P}^{#1}}
 \newtheorem{theorem}{{\rm T\sc heorem}}[section]
 \newtheorem{lemma}[theorem]{{\rm L\sc emma}}
 \newtheorem{corollary}[theorem]{{\rm C\sc orollary}}
 \newtheorem{proposition}[theorem]{{\rm P\sc roposition}}
 \newtheorem{claim}[theorem]{Claim}
 \newtheorem{sublemma}[theorem]{Sublemma}
 \newtheorem{definition}[theorem]{{\rm D\sc efinition}}
 \theoremstyle{definition}
 \newtheorem{remark}[theorem]{{\rm R\sc emark}}
  \newtheorem*{acks}{Acknowledgments}
 \newtheorem{examp}[theorem]{Example}
 \newcommand\C{{\bf C}}
 \newcommand\CC{{\bf C}}
 \newcommand\ZZ{{\bf Z}}
 \newcommand\QQ{{\bf Q}}
 \newcommand\GG{{\bf G}}
\newcommand\PP{{\bf P}}
\begin{document}
 \title {Variety of power sums and divisors in the moduli space of cubic fourfolds}
 \author[Kristian Ranestad]{Kristian Ranestad}
 \address{Matematisk institutt\\
         Universitetet i Oslo\\
         PO Box 1053, Blindern\\
         NO-0316 Oslo\\
         Norway}
         \thanks{K.R. partially supported by RCN project no 239015 ``Special Geometries''.}
 \author[Claire Voisin]{ Claire Voisin}
  \address{ Coll\`ege de France\\ 3 rue d'Ulm\\ 75005 Paris\\ France
  }
\subjclass[2010]{14J70. Secondary 14M15,
 14N99 .}

 \begin{abstract} We show that a cubic fourfold $F$ that is apolar to a Veronese surface has the property
  that its variety of power sums $VSP(F,10)$ is singular along a $K3$ surface of genus
  $20$ which is the variety of power sums of a sextic curve. This relates  constructions of Mukai and Iliev and
  Ranestad.
We also prove that these cubics form a divisor in the moduli space
of cubic fourfolds and  that this divisor is not a Noether-Lefschetz
divisor. We use this result to prove that there is no nontrivial
Hodge correspondence between a very general cubic and its $VSP$.

\end{abstract}
 \maketitle
 \pagestyle{myheadings}\markboth{\textsc{ Kristian
 Ranestad and Claire Voisin}}{\textsc{Varieties of power sums and cubic fourfolds}}
\section{Introduction}
For a hypersurface $F\subset \Pn n=\PP(V^*)$ defined by a homogeneous
polynomial $f\in S^dV$ of degree $d$ in $n+1$
 variables,  we define the variety
 of sums of powers
 as the Zariski closure
 \begin{eqnarray}
 \label{eq7june}VSP(F,s) = \overline{\{\{[l_1],\ldots,[ l_s]\}\in \text{Hilb}_s
 (\check{\PP^n})\mid\exists\lambda_i \in \C:
 f=\lambda_1l_1^d+\ldots+\lambda_s l_s^d\}},
 \end{eqnarray}
 in the Hilbert scheme $\text{Hilb}_s(\check{\PP^n})$,
 of the set of power sums presenting $f$ (see
 \cite{RS}).  The minimal $s$ such that  $VSP(F,s)$ is nonempty is called the {\bf rank} of $F$.  We will study these power sums
 using apolarity.
 Concretely, we can see the defining equation $f$ as the equation of  a hyperplane $H_f$ in the dual
 space $S^dV^*$, and more generally, we
 get for each $k\leq d$ a subspace $I_f^k:=[H_f:{\rm Sym}^{d-k}V^*]\subset S^kV^*$.
 \begin{definition} We say that a subscheme
 $Z\subset \check{\PP^n}$ is apolar to $f$ (or to $F=V(f)$) if $I_Z\subset
 I_f$, or, equivalently, $I_Z^d\subset I_f^d=H_f$.  We use  the term symmetrically, and also say that $f$ is apolar to $Z$ if $I_Z^d\subset I_f^d=H_f$.
\end{definition}
The relation between apolarity and power sums is given by the
following duality lemma (see \cite{IR}):

\begin{lemma}{\label{3.4}} Let $l_1,\ldots, l_s\in V$ be linear forms.
Then
 $f = \lambda_{1}l_1^d+\ldots+\lambda_{s}l_s^d$ for some
 $\lambda_{i}\in \C^{*}$ if and only if $Z = \{ [l_1],\ldots,[ l_s]\} \subset {\bf P}(V)$ is apolar to $F=V(f)$.\end{lemma}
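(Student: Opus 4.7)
The plan is to unwind everything through the apolarity pairing. Concretely, elements of $V^*$ act on $\mathrm{Sym}^\bullet V$ as constant-coefficient differential operators, giving for each $k$ a perfect pairing $S^kV^*\otimes S^kV\to\C$ and, more generally, a contraction $S^kV^*\otimes S^dV\to S^{d-k}V$, $(g,f)\mapsto g\cdot f$. The first bookkeeping step I would carry out is to identify $I_f^k=[H_f:\mathrm{Sym}^{d-k}V^*]$ with the ordinary annihilator $\{g\in S^kV^*:g\cdot f=0\}$: the condition $gh\cdot f=0$ in $\C$ for every $h\in S^{d-k}V^*$ is, by perfectness of the degree-$(d-k)$ pairing, equivalent to $g\cdot f=0$ in $S^{d-k}V$. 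The one explicit identity I would record is
\[
g\cdot l^d=\frac{d!}{(d-k)!}\,g(l)\,l^{d-k}\qquad (g\in S^kV^*,\ l\in V),
\]
which follows from iterated differentiation of a $d$-th power.

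The direction $(\Rightarrow)$ is then a one-line check: if $f=\sum_i\lambda_il_i^d$ and $g\in I_Z$ is homogeneous of some degree $k\le d$, then $g(l_i)=0$ for every $i$, so by the displayed identity $g\cdot l_i^d=0$ and hence $g\cdot f=0$, which means $g\in I_f^k$. Hence $I_Z\subset I_f$.

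The direction $(\Leftarrow)$ is the content, and I would argue it by a double-annihilator computation. Set $W:=\mathrm{span}(l_1^d,\ldots,l_s^d)\subset S^dV$; taking $k=d$ in the displayed identity gives $g\cdot l_i^d=d!\,g(l_i)$, so the annihilator $W^\perp\subset S^dV^*$ coincides with $I_Z^d$, the degree-$d$ component of the homogeneous ideal of $Z\subset\PP(V)$. The hypothesis $I_Z\subset I_f$ yields $I_Z^d\subset I_f^d=\{g:g\cdot f=0\}$, i.e. every element of $W^\perp$ annihilates $f$, which by biduality means $f\in W$. This produces an expression $f=\sum_i\lambda_il_i^d$ with $\lambda_i\in\C$; one obtains $\lambda_i\in\C^*$ by simply discarding the vanishing terms (the corresponding points $[l_i]$ are then not needed, and $Z$ contains a smaller apolar subscheme).

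The only point requiring care throughout is the dualization bookkeeping: the points $[l_i]$ live in $\PP(V)$, whose homogeneous coordinate ring is $\mathrm{Sym}^\bullet V^*$, so it is elements of $\mathrm{Sym}^\bullet V^*$ that act on $f\in S^dV$ through the apolarity contraction. Once this is in place, no serious obstacle remains: both implications reduce to a single linear-algebra consequence of the perfectness of the apolarity pairing, and the identification $W^\perp=I_Z^d$ is the one substantive input.
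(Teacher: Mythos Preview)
The paper does not prove this lemma; it simply states it with a citation to \cite{IR}. Your argument is the standard proof of the apolarity lemma and is correct: the identification $I_f^k=\{g\in S^kV^*:g\cdot f=0\}$ via perfectness of the pairing, together with the differentiation identity $g\cdot l^d=\tfrac{d!}{(d-k)!}g(l)\,l^{d-k}$, immediately gives both directions, the substantive one being the double-annihilator step $W^{\perp}=I_Z^d\subset I_f^d\Rightarrow f\in W$.

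Your closing remark is well placed and in fact points to a genuine imprecision in the statement: as literally written with $\lambda_i\in\C^*$, the backward implication is false. For instance, if $f=l_1^d$ and $[l_2]\ne[l_1]$, then $Z=\{[l_1],[l_2]\}$ satisfies $I_Z\subset I_{\{[l_1]\}}\subset I_f$, yet $f$ cannot be written as $\lambda_1 l_1^d+\lambda_2 l_2^d$ with $\lambda_2\ne 0$ since $l_1^d,l_2^d$ are linearly independent. What you actually prove---and what is used throughout the paper---is the version with $\lambda_i\in\C$, which is the correct formulation.
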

 In the case $F\subset\PP^5$ is a general cubic hypersurface, the rank of $F$ is $10$ and the variety of $10$-power sums of $F$ is $4$-dimensional.
In the paper \cite{IR}, Iliev and the first author exhibited cubic
 fourfolds
 $F_{IR}(S)$ associated to $K3$ surfaces $S$ of degree $14$ obtained as
 the transverse intersection $\GG(2,6)\cap \PP_S$ of the Grassmannian
 $\GG(2,6)$ with a codimension $6$ linear space $\PP_S$ of $\PP(\bigwedge^2V_6)=\PP^{14}$ (see Section \ref{sec1} for
 the precise construction).
 On the other hand Beauville and Donagi, in \cite{BD}, associate  to such a $K3$ surface $S$
 the Pfaffian cubic $F_{BD}(S)$ which is the intersection of the
 Pfaffian cubic in $\PP(\bigwedge^2V_6^*)$ with the
 $\PP^5\subset \PP(\bigwedge^2V_6^*)$ orthogonal to $\PP_S$. The following result is proved in \cite{IR}.

  \begin{theorem}{\label{3.7}} For general $S$ as above, the variety $VSP(F_{IR}(S),10)$ is isomorphic to the family of
 secant lines to $S$, i.e. to ${\rm Hilb}_2(S)$.
 \end{theorem}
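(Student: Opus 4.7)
The plan is to work throughout in the apolarity picture of Lemma~\ref{3.4}. Letting $V$ be the six-dimensional vector space with $F_{IR}(S) \subset \PP(V^*) = \PP^5$ and viewing the defining cubic $f$ as an element of $S^3 V$, the hyperplane $H_f \subset S^3 V^*$ coincides with $I_f^3$, so $VSP(F_{IR}(S),10)$ is the Zariski closure in $\text{Hilb}_{10}(\PP(V))$ of the locus of reduced length-$10$ subschemes $Z$ with $I_Z^3 \subset H_f$.

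I would then construct a morphism $\phi\colon \text{Hilb}_2(S) \to VSP(F_{IR}(S),10)$ directly from the Iliev--Ranestad construction of $F_{IR}(S)$ recalled in Section~\ref{sec1}. For a general reduced pair $\{p,q\} \in \text{Hilb}_2(S)$, the points correspond to $2$-planes $\pi_p,\pi_q \subset V_6$, and together with the secant line $\overline{pq} \subset \PP_S$ these cut out---by the very definition of $F_{IR}(S)$---a canonical length-$10$ subscheme $Z_{p,q} \subset \PP(V)$. The first thing to verify is that $Z_{p,q}$ is apolar to $F_{IR}(S)$, i.e.\ $I_{Z_{p,q}}^3 \subset H_f$; this should reduce to a linear-algebra identity built into how $f$ is produced from $S$. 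The map $\phi$ is then extended across the diagonal of $\text{Hilb}_2(S)$ by taking flat limits of the $Z_{p,q}$, using properness of the Hilbert scheme and the closedness of apolarity.

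The remaining task is to show $\phi$ is an isomorphism. Both sides are irreducible projective fourfolds: the source is a hyperk\"ahler fourfold of Beauville, and the target has dimension $4$ because the generic rank of a cubic in $\PP^5$ is $10$ and one can keep $F_{IR}(S)$ generic enough to lie in the open locus of cubics with $4$-dimensional $VSP$. It therefore suffices to construct an inverse on a Zariski-dense open subset. Given a general apolar configuration $Z = \{[l_1],\dots,[l_{10}]\} \in VSP(F_{IR}(S),10)$, the syzygies of the cubes $l_i^3$ modulo $H_f$---a Gale-dual transform of $Z$---naturally land in a space that the definition of $F_{IR}(S)$ identifies with a part of $\bigwedge^2 V_6$; the two points of $S$ distinguished by this Gale dual then recover $\{p,q\}$.

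The principal obstacle is precisely this inversion: establishing that every length-$10$ apolar configuration arises from a secant line to $S$ rather than from some unrelated rank-$10$ decomposition of $f$. This will require a uniqueness statement---most plausibly that a very general cubic in $\PP^5$ admits at most one Mukai-style parametrization via a $K3$ of genus $8$---together with a careful use of the defining data of $F_{IR}(S)$. The degenerations along the diagonal of $\text{Hilb}_2(S)$ are a secondary technical point, handled by the tangent-obstruction theory of apolar subschemes.
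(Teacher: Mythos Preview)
This theorem is not proved in the present paper; it is quoted from \cite{IR}, and the paper only recalls part of the mechanism in the proof of Lemma~\ref{DIR}. Your sketch should therefore be compared against what \cite{IR} actually does, and against the summary given in Lemma~\ref{DIR}.

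The principal gap is in the definition of the map $\phi$. You say that a pair $\{p,q\}\in\mathrm{Hilb}_2(S)$, together with the secant line $\overline{pq}$, ``cut out---by the very definition of $F_{IR}(S)$---a canonical length-$10$ subscheme $Z_{p,q}\subset\PP(V)$''. This is not how the construction works, and the secant line by itself does not produce ten points in $\PP(V)$. The key geometric fact, recalled in the proof of Lemma~\ref{DIR}, is that each point of $S$ corresponds to a rational normal \emph{quartic scroll} in $\PP(V)$ which is apolar to $F_{IR}(S)$; two such scrolls associated to $p$ and $q$ meet in a finite scheme of length $10$, and \emph{that} intersection is $Z_{p,q}$. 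Without this ingredient there is no candidate for $\phi$, and the verification that $Z_{p,q}$ is apolar to $f$ is not a ``linear-algebra identity built into the definition'' but the content of \cite[Lemma~2.9]{IR}.

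Your inversion step is likewise missing the relevant structure. In \cite{IR} one shows (via the Gale transform, as you guessed) that a general length-$10$ apolar subscheme lies on a pair of quartic scrolls coming from the two rulings of a quadric in $\PP^3$; these scrolls, being apolar to $f$, are then identified with two points of $S$. Your proposal of reading off ``two points of $S$'' from syzygies of the $l_i^3$ modulo $H_f$ landing in $\bigwedge^2 V_6$ does not correspond to any step of the actual argument and, as written, does not determine anything in $S$. The uniqueness statement you invoke (``at most one Mukai-style parametrization'') is also not what is used: what is needed is that the quartic scrolls through a general apolar $Z$ already belong to the family parametrized by $S$, which is a consequence of the Gale-transform description together with the construction of $F_{IR}(S)$.
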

 Combining this result with those of Beauville and Donagi \cite{BD},
 we  conclude that  $VSP(F_{IR}(S),10)$ is isomorphic to the Fano variety of lines in the Pfaffian cubic fourfold $F_{BD}(S)$.
 Theorem \ref{3.7} also says that $VSP(F_{IR}(S),10)$
 is  a smooth hyperk\"ahler fourfold.
 A deformation argument (\cite[proof of Theorem 3.17]{IR}), may therefore be applied
 to prove
 \begin{corollary}{\label{3.8}} For a general cubic fourfold $F$, the variety $VSP(F,10)$ is a smooth and irreducible hyperk\"ahler fourfold.
 \end{corollary}
 \begin{remark} {\rm Note that the statement of \cite[Theorem 3.17]{IR} is incorrect, and was corrected in \cite{IR2}.}
 \end{remark}

Recall from \cite{BD} that the Hodge structure on $H^4(F,\QQ)$, for
$F$ a smooth cubic fourfold, is up to a shift isomorphic to the
Hodge structure on $H^2$ of its variety of lines, the isomorphism
being induced by the incidence correspondence. The construction of
Iliev and Ranestad provides for general $F$ a second hyperk\"ahler
fourfold $VSP(F,10)$ associated to $F$. A natural question is
whether there is also an isomorphism of Hodge structures of bidegree
$(-1,-1)$ between $H^4(F,\QQ)$ and $ H^2(VSP(F,10),\QQ)$. Note that
Theorem \ref{3.7} above combined with the results of Beauville and
Donagi does not imply this statement even for the particular cubic
fourfolds of the type $F_{IR}(S)$, because the Hodge structures on
degree $4$ cohomology of  the cubics $F_{IR}(S)$ and $F_{BD}(S)$
could be unrelated. Another way of stating our question is whether
the two hyperk\"ahler fourfolds associated to $F$, namely its
variety of lines and $VSP(F,10)$, are ``isogenous'' in the Hodge
theoretic sense.

We prove in this paper that such a Hodge correspondence does not
exist for  general $F$.

\begin{theorem}\label{main} For a very general cubic
fourfold $F$, there is no  nontrivial morphism of Hodge structures
$$\alpha:H^4(F,\QQ)_{prim}\rightarrow H^2(VSP(F,10),\QQ).$$

In particular, there is no correspondence $\Gamma\in
CH^3(F\times VSP(F,10))$, such that
$[\Gamma]_*:H^4(F,\QQ)_{prim}\rightarrow
H^2(VSP(F,10),\QQ)$ is non zero.

\end{theorem}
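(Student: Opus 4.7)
The plan is to argue by contradiction using the two preceding main results established in the paper: (i) for $F\in D$, the variety $VSP(F,10)$ is singular along a $K3$ surface $S$ of genus $20$, and (ii) $D$ is a divisor in the moduli space $\mathcal{M}$ of cubic fourfolds that is \emph{not} a Noether--Lefschetz divisor. Assume for contradiction that a nontrivial $\alpha$ exists for some very general $F$. Since the monodromy representation of the VHS of cubic fourfolds has Zariski closure equal to the full orthogonal group of $H^4(F,\QQ)_{prim}$, the primitive Hodge structure is simple for very general $F$, so $\alpha$ is injective. Moreover for very general $F$ the fourfold $VSP(F,10)$ is a smooth hyperk\"ahler fourfold of $K3^{[2]}$-type with $b_2=23$ and Picard rank $1$, so its transcendental Hodge structure has rank $22=\dim H^4(F,\QQ)_{prim}$; consequently $\alpha$ is an isomorphism onto this transcendental part.

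Next I would spread $\alpha$ over $\mathcal{M}$. Since $\alpha$ is Hodge-theoretic at a very general point of the moduli space, it gives a Hodge class in a suitable Tate twist of the Hom-VHS, and by Deligne's theorem of the fixed part it is automatically invariant under the global monodromy. After shrinking $\mathcal{M}$ to a Zariski open $B$ and, if necessary, passing to a finite \'etale cover, $\alpha$ therefore extends to a morphism of variations of Hodge structures $\alpha_B:\mathcal{H}^4_{prim}\to\mathcal{H}^2(VSP)$ over $B$. The kernel of $\alpha_B$ is a sub-local system, so the rank is constant and equal to $22$ on all of $B$.

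The crucial step is the specialization to a very general point $F_0\in D\cap B$. On the source side, (ii) ensures that $F_0$ is not in any Noether--Lefschetz divisor, so $H^4(F_0,\QQ)_{prim}$ is still simple and contains no Hodge $(1,1)$-class; consequently the rank-$22$ image of $\alpha_{F_0}$ contains no Hodge class. On the target side, (i) provides the $K3$ surface $S$ inside the singular locus of $VSP(F_0,10)$. Passing to a smooth hyperk\"ahler birational model $\widetilde V$ of $VSP(F_0,10)$ --- either a crepant resolution contracting a divisor onto $S$, or the smooth central fibre of a semistable reduction of $\mathcal V\to B$ along a disc transverse to $D$ --- produces a distinguished extra Hodge $(1,1)$-class on $\widetilde V$, namely the class of the exceptional divisor over $S$ (or equivalently the divisor contracted by the birational map to $VSP(F_0,10)$). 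Hence $\widetilde V$ has Picard rank $\geq 2$ and its transcendental Hodge structure has rank $\leq 21$. Using the canonical pullback $\pi^*$ from the pure weight-$2$ part of $H^2(VSP(F_0,10))$ into $H^2(\widetilde V)$ (which is injective on pure weight-$2$ classes), the map $\alpha_{F_0}$ would then embed the rank-$22$ simple Hodge structure $H^4(F_0,\QQ)_{prim}$ into the transcendental part of $H^2(\widetilde V)$, which has rank at most $21$ --- a contradiction. The second assertion of the theorem follows at once, since any correspondence $\Gamma\in CH^3(F\times VSP(F,10))$ induces a morphism of Hodge structures $[\Gamma]_*$.

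The hard part will be the precise analysis underpinning the last step: converting the geometric input (i) --- a singular $K3$ inside $VSP(F_0,10)$ --- into a genuine extra $(1,1)$-Hodge class on a smooth hyperk\"ahler target, and checking the compatibility of the spread $\alpha_B$ with the passage to that target. This requires a description of the singularities of $VSP(F_0,10)$ along $S$ (and of how the family $\mathcal V\to B$ degenerates across $D$) sufficient to build the smooth birational model $\widetilde V$ in the hyperk\"ahler category. The identification of $S$ with the variety of power sums of a sextic curve, connecting the Mukai and Iliev--Ranestad constructions as in the first main theorem, is the geometric input that should make this analysis tractable.
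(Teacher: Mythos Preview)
Your proposal is essentially correct and follows the same strategy as the paper: use the non-Noether--Lefschetz property of $D_{V-ap}$ together with the $K3$ surface in the singular locus of $VSP(F_0,10)$ to produce an extra algebraic class on a smooth model, forcing a rank (or Hodge-number) mismatch. The logical packaging differs slightly: the paper argues directly rather than by contradiction, first building a smooth family $\widetilde{\mathcal V SP}\to\widetilde B$ and then observing that the locus of $b$ admitting a nonzero Hodge morphism $H^4(\mathcal X_b,\QQ)_{prim}\to H^2(\widetilde{\mathcal V SP}_b,\QQ)$ is a Hodge locus, hence a countable union of closed subsets; it then exhibits a single point $b\in D^0_{V-ap}$ outside this locus. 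This is the contrapositive of your spreading argument and avoids having to check compatibility of a pre-spread $\alpha_B$ with the passage to $\widetilde V$.

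Two technical points are worth flagging. First, the smooth family is obtained not by an \'etale cover but by the \emph{ramified} double cover of $B^0$ along $D^0_{V-ap}$, followed by a small resolution; this works precisely because the singularities of $VSP(F_0,10)$ along $S$ are nondegenerate quadratic (Theorem~\ref{propsingintro}), so that the total space of the pulled-back family acquires ordinary threefold nodes along $\mathcal S$. The fibre $\widetilde{\mathcal V SP}_b$ over $b\in D^0_{V-ap}$ is then the blow-up of $\mathcal V SP_b$ along $S_b$, and it is hyperk\"ahler as a smooth fibre of a family whose general member is. Second, for the numerical contradiction the paper does a little more than ``Picard rank $\geq 2$'': it uses that the symplectic form cannot vanish on the exceptional divisor $E_b$ to obtain an injection $H^2(\widetilde{\mathcal V SP}_b,\QQ)_{tr}\hookrightarrow H^2(S_b,\QQ)_{tr}$, whence $h^{1,1}_{tr}\leq 19<20=h^{2,2}(H^4(F_0)_{prim})$. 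Your rank bound $\leq 21$ would also suffice, but the paper's argument bypasses any need to know $b_2(\widetilde{\mathcal V SP}_b)=23$ a priori.
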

This theorem cannot be proved locally (in the usual topology),
because the  two variations of Hodge structures have the same shape
and  we have no description of the periods of $VSP(F,10)$: it is
even not clear how its holomorphic $2$-form is constructed. In fact,
by the general theory of the period map, there exists locally near a
general point of the moduli space of cubic fourfolds and up to a
local change of holomorphic coordinates, an isomorphism between the
complex variations of Hodge structure on $H^4(F,\CC)_{prim}$ and
$H^2(VSP(F,10),\CC)_{prim}$. Indeed, by the work of Beauville and
Donagi, we know that the variation of Hodge structure on
$H^4(F,\CC)_{prim}$ is isomorphic (with a shift of degree) to the
variation of Hodge structure on $H^2_{prim}$ of the corresponding
family of  varieties of lines, hence in particular this is (up to a
shift of degree) a complete variation of polarized Hodge structures
of weight $2$ with Hodge numbers $h^{2,0}=1$, $h^{1,1}_{prim}=20$.
The same is true for the variation of Hodge structure on
$H^2(VSP(F,10),\CC)_{prim}$ once one knows that the family of
$VSP$'s is locally universal at the general point, which is
equivalent to saying that the deformations of $VSP(F,10)$ induced by
the deformations of $F$ have $20$ parameters, this last fact being
easy to prove. Hence both complex variations of Hodge structures are
given (locally near a general point in the usual topology) by an
open holomorphic embedding
 into a quadric in $\PP^{21}$, and thus they are locally isomorphic  since a quadric is a homogeneous space.

 Notice that if  we consider plane sextic curves instead of cubic fourfolds, then
 we are faced with an analogous situation, namely we can associate naturally
 to a plane  sextic curve $C$ two $K3$ surfaces, the first one being the double cover of $\PP^2$ ramified along
 $C$, and the other one being the variety of power sums $VSP(C,10)$, which
 has been proved by Mukai \cite{Muk} to be
a smooth $K3$ surface for general $C$ (see also \cite{dolgachev}).

 Theorem \ref{main} will be obtained as a consequence of the
 following construction which relates the Mukai construction for
 plane
 sextic curves to the Iliev-Ranestad construction for cubic fourfolds.
 This
involves the introduction of the closed  algebraic subset of the moduli space of the
cubic $F$
parameterizing cubic fourfolds apolar to a Veronese
surface.  This subset,  which we
will prove to be a divisor $D_{V-ap}$, will now be  introduced in more detail.

Let $W$ be a $3$-dimensional vector space, and $V:=S^2W$, which is a
6-dimensional vector space. There is a natural map
$$s:S^6W\rightarrow S^3V$$
which is dual to the multiplication map
$$m:S^3(S^2W^*)\rightarrow S^6W^*.$$
If $a\in W$, we have
\begin{eqnarray}\label{letrivial}
s(a^6)=(a^2)^3.
\end{eqnarray}

 The map $s$ associates to a plane sextic curve $C$ with equation
 $g\in S^6W$ a four dimensional
cubic $F$ with equation $f=s(g)\in S^3V$. Note that we recover $g$
from $f$ using the multiplication morphism $m':S^3V\rightarrow
S^6W$. Indeed  we have \begin{eqnarray}\label{duals} m'(f)=g,
\end{eqnarray}
 as an immediate consequence of  (\ref{letrivial}).

\begin{lemma} \label{leautretrivial} The cubic polynomials in the image of $s$ are exactly those
which are apolar to the Veronese surface $\Sigma\subset
\PP(S^2W)$.
\end{lemma}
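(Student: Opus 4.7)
The plan is to identify the image of $s$ with the annihilator of $(I_\Sigma)_3$ in $S^3V$ via the duality between $s$ and $m$, and then to check that this annihilator condition is precisely the apolarity of $f$ to the Veronese surface $\Sigma$.

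The first step is to show $\ker m = (I_\Sigma)_3$, where $I_\Sigma \subset S^\bullet V^*$ is the homogeneous ideal of $\Sigma \subset \PP(V) = \PP(S^2W)$. A cubic polynomial $P \in S^3V^* = S^3(S^2W^*)$ vanishes on $\Sigma$ if and only if it vanishes at every $a^2 \in V$, $a \in W$; but by construction of $m\colon S^3(S^2W^*) \to S^6W^*$, the value of the sextic $m(P)$ at $a \in W$ equals the value of the cubic $P$ on the vector $a^2 \in S^2W = V$. Thus $P \in (I_\Sigma)_3$ if and only if $m(P)=0$, giving $(I_\Sigma)_3 = \ker m$.

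The second step is formal: since $s$ is by definition the transpose of $m$ with respect to the natural pairings, the image of $s$ is the annihilator of $\ker m$ in $S^3V$, i.e.\ $(I_\Sigma)_3^\perp$. In the conventions of the excerpt, $(I_\Sigma)_3^\perp$ consists of those $f \in S^3V$ satisfying $(I_\Sigma)_3 \subset H_f = I_f^3$, which is the condition that $\Sigma$ be apolar to $f$ in degree $3$.

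Finally, one has to check that degree-$3$ apolarity of $\Sigma$ to $f$ already implies full apolarity $I_\Sigma \subset I_f$. The degree-$1$ condition is automatic because $\Sigma$ is linearly non-degenerate, so $(I_\Sigma)_1 = 0$. For degree $2$, I would use that the Veronese is scheme-theoretically cut out by quadrics (the $2\times 2$ minors of a generic symmetric $3\times 3$ matrix of linear forms on $V$), so $(I_\Sigma)_3 = V^*\cdot(I_\Sigma)_2$; the adjunction $\langle f, v\phi\rangle = \langle v\cdot f, \phi\rangle$ then shows that $(I_\Sigma)_3 \subset H_f$ is equivalent to $\phi\cdot f = 0 \in V$ for every $\phi \in (I_\Sigma)_2$, i.e.\ $(I_\Sigma)_2 \subset I_f^2$. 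The main bookkeeping is this last step; the core of the lemma is built into the definition of $s$ as the transpose of $m$, so I do not expect any genuine obstacle beyond tracking the apolarity conventions.
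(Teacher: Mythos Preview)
Your proof is correct and follows essentially the same route as the paper: both identify $m$ with the restriction-to-$\Sigma$ map, so that $\ker m = I_\Sigma(3)$, and then use that $s$ is the transpose of $m$ to get $\mathrm{Im}\,s = I_\Sigma(3)^{\perp}$, which is the set of $f$ with $I_\Sigma(3)\subset H_f$. Your argument is in fact slightly more complete than the paper's: the paper simply asserts that apolarity of $f$ to $\Sigma$ is equivalent to the degree-$3$ condition $I_\Sigma(3)\subset H_f$, whereas you actually verify this by treating degrees $1$ and $2$ separately, using that $I_\Sigma$ is generated in degree $2$ and the adjunction $\langle f,v\phi\rangle=\langle \phi\cdot f,v\rangle$.
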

\begin{proof} Indeed, by definition of apolarity, a cubic
hypersurface defined by an equation $f\in S^3V$ is apolar to the
Veronese surface if and only if the hyperplane $H_f\subset S^3V^*$
determined by $f$ contains the ideal $I_\Sigma(3)$. Equivalently,
$\langle f,k \rangle =0$, for $k\in I_{\Sigma}(3)$. But as we have $f=s(g)$,
(\ref{duals}) tells that
$$\langle f,k \rangle =\langle g,m(k) \rangle .$$
By definition of the Veronese embedding, the map $m:
S^3V^*\rightarrow S^6W^*$ is nothing but the restriction map to
$\Sigma$, so that $m(k)=0$ and $\langle f,k\rangle =0$ for $k\in I_\Sigma(3)$.
For the converse, note that the map $s$ is injective and that ${\rm dim}_\CC S^6W= {\rm dim}_\CC S^3V^*-{\rm dim}_\CC I_\Sigma(3)$,
so  if $\langle f,k\rangle =0$ for every $k\in I_\Sigma(3)$, then $f$ is in the image of $s$.
\end{proof}

It follows  that the $K3$ surface  $VSP(C,10)$ embeds naturally in
$VSP(F,10)$ and we will prove in Section \ref{secpropsing}:
\begin{theorem}\label{propsingintro} The variety $VSP(F,10)$ is  singular along
$VSP(C,10)$.
For a general choice of $C$, the variety $VSP(F,10)$ is
smooth away from the $K3$ surface $VSP(C,10)$ and has nondegenerate
quadratic singularities along $VSP(C,10)$.
\end{theorem}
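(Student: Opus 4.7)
I analyse $T_Z VSP(F,10)$ at a reduced $Z=\{[a_i^2]\}_{i=1}^{10} \in VSP(C,10)$, where $g=\sum\mu_i a_i^6$ presents $C$ and $u_i := a_i^2 \in V$. Apolarity gives
\[
T_Z VSP(F,10) \;=\; \Bigl\{(v_i)\in \bigoplus_i V/\langle u_i\rangle : \sum_i \mu_i v_i u_i^2 \in \langle u_j^3\rangle_j\Bigr\},
\]
and $\dim T_Z VSP(F,10) = 4 + \dim \ker \iota_Z$ for the dual map
\[
\iota_Z \colon I_Z(3) \longrightarrow \bigoplus_{i=1}^{10} T^*_{[u_i]}\PP(V), \quad q\longmapsto (\mu_i\, dq|_{u_i}).
\]
Since $Z \subset \Sigma$, I decompose $I_Z(3)$ via $0 \to I_\Sigma(3) \to I_Z(3) \xrightarrow{m} I_{Z'}(6) \to 0$. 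The tangential part of $dq|_{u_i}$ along $T_{u_i}\Sigma$ is exactly $d(m(q))|_{a_i}$; its vanishing at every $i$ forces $m(q) \in I_{Z'}^{(2)}(6)$, which is zero by the smoothness of $VSP(C,10)$ for general $C$ (Mukai). Hence $\ker \iota_Z \subset I_\Sigma(3)$, and what remains is the normal-derivative condition $dq|_{u_i}|_{N_{u_i}\Sigma}=0$ for all $i$.

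The crucial point is $h^0(\mathcal I_\Sigma^2(3)) = 1$: the normal-derivative map
\[
\nu \colon I_\Sigma(3) \longrightarrow H^0\bigl(\PP(W),\, \mathcal N^*_{\Sigma/\PP(V)} \otimes \mathcal O(6)\bigr)
\]
has exactly a one-dimensional kernel, spanned by the cubic $\Delta$ defining the secant (chordal) variety $\mathrm{Sec}(\Sigma) \subset \PP(V)$. Indeed $\mathrm{Sec}(\Sigma)$ is a cubic hypersurface with singular locus exactly $\Sigma$, so $\Delta \in I_\Sigma^2(3)$; numerically, $\dim I_\Sigma(3)=28$ while Riemann-Roch on $\PP^2$ applied to the rank-$3$ Veronese conormal bundle gives $h^0(\mathcal N^*_\Sigma(6))=27$. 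For generic $Z'$ the subsequent evaluation $H^0(\mathcal N^*_\Sigma(6)) \to \bigoplus_{i} N^*_{u_i}\Sigma$ is injective (by global generation of $\mathcal N^*_\Sigma(6)$ and a generic-position argument), so $\ker \iota_Z = \langle \Delta \rangle$ and $\dim T_Z VSP(F,10) = 5$; thus $Z$ is singular with exactly one extra tangent direction.

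For the nondegenerate quadratic structure, I expand the apolarity identity $\sum_i \lambda_i(\epsilon)(u_i+\epsilon v_i + \epsilon^2 w_i)^3 = f$ to order $\epsilon^2$ and pair the second-order residue against $\Delta \in \ker \iota_Z$. Using $d\Delta|_{u_i}=0$ the linear-in-$v$ term drops out, yielding the Hessian obstruction form
\[
Q(v) \;=\; \sum_{i=1}^{10} \mu_i\, \mathrm{Hess}(\Delta)_{u_i}(v_i, v_i) \qquad \text{on}\ T_Z VSP(F,10).
\]
Since $\mathrm{Sing}(\mathrm{Sec}(\Sigma)) = \Sigma$, $\mathrm{Hess}(\Delta)_{u_i}$ has rank $3$ on $V/\langle u_i\rangle$ with kernel $T_{u_i}\Sigma$; so $Q$ vanishes on the subspace $T_{Z'}VSP(C,10) \subset T_Z VSP(F,10)$ and descends to a quadratic form on the $3$-dimensional transverse part, which is nondegenerate for generic $Z'$ (restriction of the rank-$30$ form $\sum_i \mu_i\, \mathrm{Hess}(\Delta)_{u_i}$ on $\bigoplus N_{u_i}\Sigma$ to a generic $3$-plane). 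This yields the asserted nondegenerate quadratic singularity along $VSP(C,10)$. Smoothness of $VSP(F,10)$ off $VSP(C,10)$ follows from the Iliev--Ranestad smoothness for generic cubic fourfolds combined with upper semicontinuity. The main obstacle is the computation $\ker\nu = \langle \Delta \rangle$, which requires both the geometric identification of $\Delta$ as the secant discriminant and the sheaf-cohomological count $h^0(\mathcal N^*_\Sigma(6))=27$, together with verifying generic injectivity of the $Z'$-evaluation.
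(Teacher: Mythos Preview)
Your treatment of the singularity and the embedding dimension is essentially the paper's: the extra cotangent direction at $Z\in VSP(C,10)$ is exactly the discriminant cubic $\Delta$ (the paper calls it $h$) singular along $\Sigma$, and the Hessian of the defining section is $\sum_i \mu_i\,\mathrm{Hess}(\Delta)_{u_i}$ restricted to the normal space $N_{S_f,[Z]}$. Your cohomological packaging ($h^0(\mathcal I_\Sigma^2(3))=1$, $h^0(\mathcal N^*_\Sigma(6))=27$) is a clean alternative to the paper's more direct argument via Lemma~\ref{singvsp}.

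There are, however, two genuine gaps. First, ``smoothness of $VSP(F,10)$ off $VSP(C,10)$ follows from Iliev--Ranestad smoothness for generic $F$ combined with upper semicontinuity'' is not an argument: semicontinuity only tells you the singular locus can \emph{grow} as $f$ specializes into $D_{V-ap}$, not that it is confined to $VSP(C,10)$. The paper works over the divisor $D_{V-ap}$ and proves (Lemma~\ref{leencore}) that the \emph{universal} family $\mathcal{V}SP_{V-ap}$ is smooth away from the family of surfaces $\mathcal S$; this requires a case analysis according to whether $\Delta$ vanishes on $Z$, and in the bad case uses that $f$ has cactus rank $10$ (Proposition~\ref{lefiniteveronese}) to rule out a proper apolar subscheme. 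Nothing like this follows from semicontinuity. Second, your nondegeneracy step (``restriction of the rank-$30$ form to a generic $3$-plane'') is incomplete: the $3$-plane $N_{S_f,[Z]}\subset\bigoplus_i N_{u_i}\Sigma$ is \emph{not} a generic $3$-plane---it is determined by the apolarity equations. The paper's key observation (Lemma~\ref{leorthog}) is that this $3$-plane is the \emph{orthogonal complement}, with respect to the very form $\sum_i\mu_i\,\mathrm{Hess}(\Delta)_{u_i}$, of the $27$-dimensional image of $H^0(\Sigma,N_\Sigma)\to\bigoplus_i N_{\Sigma,u_i}$; nondegeneracy on the $3$-plane then reduces to nondegeneracy on this concrete $27$-dimensional subspace, which is checkable (already nine of the ten points give an isomorphism). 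Finally, note that you only claim nondegeneracy at a \emph{generic} point of $S_f$, whereas the theorem asserts it everywhere; the paper bridges this via the triviality of $\det N_{S_f}$ (Lemma~\ref{ledettrivial}), which you would also need.
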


Our strategy for the proof of Theorem \ref{main} is the following.
We will first prove that $D_{V-ap}$ is a divisor, and that the
divisor $D_{V-ap}$ is not a Noether-Lefschetz divisor in the moduli
space $\mathcal{M}$ of cubic fourfolds (Proposition
\ref{propnonNL}), which means that for a general cubic parameterized
by this divisor, there is no nonzero Hodge class in
$H^4(F,\QQ)_{prim}$. Secondly, using Theorem \ref{propsingintro}, we
will prove  that $D_{V-ap}$ is a Noether-Lefschetz divisor for the
family $\mathcal{V}SP(F,10)$ of varieties of power sums
 parameterized by a Zariski open set of $\mathcal{M}$, which has to be
interpreted in the sense that the generic Picard rank of the
extension along $D_{V-ap}$ of the variation of Hodge structure on
the degree $2$ cohomology of $VSP(F,10)$ is at least $2$.

Both proofs involve a careful analysis of the variety of power sums
$VSP(F,10)$ with results that we believe may have independent
interest. Indeed, the set theoretic definition given in
(\ref{eq7june}) of $VSP(F,s)$ as a closure in the Hilbert scheme
does not give a priori any information on its schematic structure.
We obtain  in Section \ref{3.2} the following results in the case of
$VSP(F,10)$ for cubic fourfolds.
 Let $U\subset {\rm Hilb}_{10}(\mathbb{P}^5)$ be the open
set of zero-dimensional
 subschemes imposing independent conditions to
cubics. There is vector bundle $E$ of rank $46$ on $U$, with fiber
$I_Z(3)$ over the point $[Z]\in{\rm Hilb}_{10}(\mathbb{P}^5)$.
 \begin{theorem} (i) (cf. Proposition \ref{allapolar}) For a general choice of $F$ in the complement of
explicit divisors in the moduli space of cubic fourfolds, the
variety of power sums $VSP(F,10)$ is contained in $U$ and is the
zero locus of a  section of the vector bundle $E^*$ on $U$.

(ii)  (cf. Proposition \ref{corlea}) For a general cubic fourfold
$F$, the variety $VSP(F,10)$ does not intersect the singular locus
of
 ${\rm Hilb}_{10}(\mathbb{P}^5)$.

 (iii) (cf. Proposition \ref{propapdivapolar} and Corollary \ref{corleb}) These results remain  true  for a general  cubic fourfold
apolar to a Veronese surface.

\end{theorem}

In order to prove these results, we were led  to introduce new
divisors in the moduli space of cubic fourfolds, that is divisors in
$\PP(S^3V)$ invariant under
the action of $PGl(6)$, along which properties stated above fail.
Many $PGl(6)$-invariant divisors were already known: the discriminant hypersurface
parameterizing singular cubic fourfolds and the infinite sequence of
divisors of smooth cubic fourfolds containing a smooth surface which
is not homologous to a complete intersection, introduced by Brendan
Hassett \cite{Ha}. The latter sequence includes  the
Beauville-Donagi hypersurface parameterizing Pfaffian cubics. These
are all Noether-Lefschetz divisors.
 Concerning the new divisors $D_{rk3}$, $D_{copl}$ and
$D_{V-ap}$  we introduce in this paper (see Section \ref{sec1}), we
prove that $D_{V-ap}$ is not a Noether-Lefschetz divisor, and it is
presumably the case that neither $D_{rk3}$ nor $D_{copl}$ are
 Noether-Lefschetz divisors.
 We do not know whether the Iliev-Ranestad divisor $D_{IR}$  parameterizing the Iliev-Ranestad
cubics is a Noether-Lefschetz divisor.
  As a consequence of Theorem \ref{3.7}, the Picard rank of the variety $VSP(F,10)$ jumps to $2$ along
 this divisor.  Therefore proving that $D_{IR}$
 is not a Noether-Lefschetz divisor could have been another approach
 to Theorem \ref{main}.
 
 \begin{acks} We would like to thank an anonymous referee for numerous suggestions that improved the presentation of our proofs.\end{acks}

\subsection{Notation}\label{notation} We give the numerical information of the minimal free resolution of a graded
$S=\CC[x_0,\ldots,x_r]$-module
$$ 0 \leftarrow M \leftarrow F_0  \leftarrow F_1 \leftarrow \ldots  \leftarrow F_n  \leftarrow  0$$
with $F_i = \bigoplus_{j \in \ZZ} \beta_{ij}S(-j)$ in {\it Macaulay2}  notation \cite{MAC2}, i. e. in the form
\[
\begin{matrix} \beta_{00} & \beta_{11} & \beta_{22} & \ldots & \beta_{n,n} \\
\beta_{01} & \beta_{12} & \beta_{23} & \ldots & \beta_{n,n+1} \\
\vdots & \vdots & \vdots & \ldots & \vdots \\
\beta_{0m} & \beta_{1,m+1} & \beta_{2,m+2} &\ldots &\beta_{n,n+m}. \\
\end{matrix}
\]
The $\beta_{0j}$ counts the number of linearly independent generators of $M$ of degree $j+1$,
while the $\beta_{ij}$, for $i>0$  counts the homogeneous sets of linearly independent syzygies of order $i$.

\section{Some divisors in the moduli space of cubic fourfolds \label{sec1}}

Let $V=\CC^6$. We introduce in this section two $PGl(V)$-invariant divisors $D_{rk3}$ and $D_{copl}$
in the open set $\PP(S^3V)_{reg}$ of the projective space
$\PP(S^3V)$
 parameterizing smooth cubic fourfolds. 
We also recall the
definition of the Iliev-Ranestad divisor $D_{IR}$. These divisors
are crucial in the proof that the set $D_{V-ap}$ considered in the
introduction is also a divisor (Corollary \ref{codim1} in Section
\ref{sec3}).

\vspace{1cm}

 {\bf The divisor $D_{rk3}$.} This is the 
 set of  cubic forms
 $[f]\in \PP(S^3V)_{reg}$ such that $f$ has a partial derivative of
 rank $\leq 3$.
 \begin{lemma} \label{lediv118aout} The set of  cubic forms
 $[f]\in \PP(S^3V)_{reg}$ such that $f$ has a partial derivative of
 rank $\leq 3$ is an irreducible divisor in
 $\PP(S^3V)_{reg}$.
 \end{lemma}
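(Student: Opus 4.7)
The plan is to study the incidence variety
\[
I = \{(f, l) \in \PP(S^3V)_{reg} \times \PP(V^*) : \partial_l f \in S^2V \text{ has rank} \leq 3\}
\]
and to extract the lemma from its two projections. First, for the projection $\pi_2 \colon I \to \PP(V^*)$: whenever $l \neq 0$, the partial derivative map $f \mapsto \partial_l f$ is a linear surjection $S^3V \twoheadrightarrow S^2V$, so the fibre $\pi_2^{-1}(l)$ is the preimage in $\PP^{55}$ of the classical determinantal variety $Q_{\leq 3} \subset \PP(S^2V)$ of quadrics of rank at most $3$. Since $Q_{\leq 3}$ is irreducible of codimension $\binom{4}{2}=6$ in $\PP^{20}$, each fibre is irreducible of codimension $6$ in $\PP^{55}$, hence of dimension $49$. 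By transitivity of the $GL(V)$-action on $V^* \setminus \{0\}$, all fibres are isomorphic, so $I$ is irreducible of dimension $49+5=54$. Consequently $D_{rk3} = \pi_1(I) \subset \PP^{55}_{reg}$ is irreducible of dimension at most $54$.

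To conclude $\dim D_{rk3}=54$, and hence that $D_{rk3}$ is a divisor, I would use upper semicontinuity of fibre dimension for $\pi_1$ and exhibit one point $(f_1, l_1) \in I$ whose fibre is $0$-dimensional near $l_1$. A natural choice is $l_1=v^6$ and
\[
f_1 = v_6(v_1^2+v_2^2+v_3^2) + h(v_1,\ldots,v_5)
\]
with $h$ a sufficiently general cubic in the first five variables, for which $q_0:=\partial_{v^6}f_1 = v_1^2+v_2^2+v_3^2$ has rank exactly $3$. Writing $\partial_l f_1$ as a $6\times 6$ symmetric matrix for $l = v^6 + \sum_{i=1}^5 a_i v^i$, its top-left $3\times 3$ block is $I_3+O(a)$ and remains invertible near $a=0$, so the rank-$\leq 3$ condition is equivalent (via Schur complement on this block) to the vanishing of a $3\times 3$ symmetric matrix $S(a,h)$ with polynomial entries in $a$. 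One then checks that for generic $h$, the only solution of $S(a,h)=0$ near $a=0$ is $a=0$, which gives $0$-dimensional local fibre.

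The main obstacle is the nontransversality of the Jacobian subspace $W_{f_1} := \mathrm{span}\{\partial_{v^i}f_1 : i=1,\ldots,6\} \subset S^2V$ with the tangent space to $Q_{\leq 3}$ at $q_0$. This tangent space has codimension $6$ and consists of symmetric forms $Y$ on $V^*$ with $Y|_{\ker q_0\times\ker q_0}=0$, i.e., with the $(4,5,6)\times(4,5,6)$-block of the matrix equal to zero. Three of these six entries---namely $(4,6)$, $(5,6)$, $(6,6)$---automatically vanish on every element of $W_{f_1}$ because $f_1$ contains no terms in $v_4v_6$, $v_5v_6$ or $v_6^2$. Consequently the first-order tangent cone to $\pi_1^{-1}(f_1)$ at $v^6$ is two-dimensional, and one must go to second order in the Schur-complement expansion: the $(6,6)$-entry contributes the quadric $a_1^2+a_2^2+a_3^2=0$, while the $(4,6)$- and $(5,6)$-entries contribute bilinear conditions coupling $(a_1,a_2,a_3)$ with the mixed block of $\sum_i a_i(\partial_i h)$. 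For generic $h$, these second-order conditions combined with the first-order equations reduce the solution set to $\{a=0\}$, giving $0$-dimensional local fibre and hence the lemma.
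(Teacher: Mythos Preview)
Your incidence-variety setup is essentially the paper's approach with different bookkeeping: the paper parameterises by triples $(p,W,f)$ with $W\subset V^*$ a $3$-plane in $\ker(\partial_p f)$, which makes the fibres over $(p,W)$ linear and hence irreducibility immediate, at the cost of treating the degenerate case $p\in\PP(W)$ separately. Your count via the determinantal locus $Q_{\le 3}\subset\PP(S^2V)$ gives the same bound $\dim I=54$, and your irreducibility argument is clean.

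The genuine gap is in your explicit example for the reverse inequality. Your chosen $f_1=v_6(v_1^2+v_2^2+v_3^2)+h(v_1,\dots,v_5)$ is \emph{not smooth}: since $\partial_{v^6}f_1=v_1^2+v_2^2+v_3^2$ has kernel $\langle v^4,v^5,v^6\rangle$, the direction $l_1=v^6$ lies in $\ker(\partial_{l_1}f_1)$, which is equivalent to $\partial_{l_1}^2 f_1=0$. But $\partial_l^2 f$ is exactly the gradient of $f$ (viewed as a function on $V^*$) at $l$, so this forces $F_1$ to be singular at $[v^6]\in\PP(V^*)$, regardless of $h$. In other words, you have placed yourself in the paper's degenerate stratum $p\in\PP(W)$, which always maps into the discriminant. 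This also explains the nontransversality you observed: the three ``automatic'' vanishings of the $(4,6),(5,6),(6,6)$ entries are precisely the shadow of $l_1\in\ker(\partial_{l_1}f_1)$.

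The fix is to choose a rank-$3$ partial whose kernel does \emph{not} contain the direction of differentiation, e.g.\ $\partial_{v^6}f_1=v_4^2+v_5^2+v_6^2$, so that $f_1=\tfrac{1}{3}v_6^3+v_6(v_4^2+v_5^2)+h(v_1,\dots,v_5)$; now $f_1(e_6)\neq 0$ and smoothness is achievable for generic $h$, and the Schur-complement analysis becomes first-order. (The paper, for its part, simply asserts that the image ``actually fill[s]-in a hypersurface'' without carrying out this step.)
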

\begin{proof} If $[f]\in D_{rk3}$, there exist a point $p\in
\PP(V^*)$ and a plane $\PP(W) \subset \PP(V^*)$ such that
\begin{eqnarray}\label{eqn18aout}\frac{\partial^2f}{\partial p\partial w}=0,\,\forall w\in W.
\end{eqnarray}
Consider the case where $p$ does not belong to $\PP(W)$ and let us
compute how many conditions on $f$ are imposed by (\ref{eqn18aout})
for fixed $p,\,W $. We may choose coordinates $X_i,\,i=0,\ldots,5$,
such that  $W$ is defined by $X_i=0,\,i=3,4,5$ and $p$ is defined by
equations $X_i=0,\,i=0,\ldots,4$. Then $f$ has to satisfy the
conditions
$$\frac{\partial^2f}{\partial X_5\partial X_i}=0,\; {\rm for\; any}\,i\in \{0,\,1,\,2\}.$$
Equivalently, we have
\begin{eqnarray}\label{eqn218aout} \frac{\partial^3f}{\partial X_5\partial
X_i\partial X_j}=0, \; {\rm for\; any}\,i\in \{0,\,1,\,2\}\;{\rm and \; any}\; j\in \{0,...,5\}.
\end{eqnarray}
 The number of coefficients of $f$ annihilated by these
conditions is $15$. As the pair $(p,W)$ has  $14$ parameters, we
conclude that the $f$ satisfying these equations for some $(p,\,W)$
fill-out at most a hypersurface.  On the other hand, the map
$$\PP(S^3V)_{reg}\to G(6,S^2V); \; [f]\mapsto \langle \frac{\partial f}{\partial X_0},..., \frac{\partial f}{\partial X_5}\rangle$$
is generically injective; for general $f$, the apolar ideal is generated by the quadrics orthogonal to the partials of $f$, and according to Macaulays theorem, the apolar ideal defines $f$ up to scalar. The rank $3$ locus in $\PP(S^2V)$ has codimension $6$, so the $6$-dimensional subspaces of $S^2V$ that intersect the rank $3$ locus form a hypersurface section in $G(6,S^2V)$.  Therefore the cubic forms that have a partial of rank $3$ form at least a divisor in $\PP(S^3V)_{reg}$, i.e. they form exactly a divisor.
It is irreducible, because
it is dominated by a projective bundle over the parameter space for
$(p,W)$.  Denote this hypersurface by $D_{rk3}$.

To complete the argument we consider the degenerate situation where $p\in \PP(W)$. It may be seen as a limit of the above case:
 We may choose coordinates $X_i,\,i=0,\ldots,5$,
such that  $W$ is defined by $X_i=0,\,i=3,4,5$ and $p_t$ is defined by
equations $X_i=0,\,i=1,\ldots,4$ and $X_5=tX_0$. Thus $p_0\in  \PP(W)$.  For any $t$, we consider the cubic forms $f$ that satisfy the
conditions
$$\frac{\partial^2f}{\partial X_0\partial X_i}-t\frac{\partial^2f}{\partial X_5\partial X_i}=0,\,i=0,\,1,\,2.$$
Equivalently, we have
\begin{eqnarray}\label{eqn318aout} \frac{\partial^3f}{\partial X_0\partial
X_i\partial X_j}-t\frac{\partial^3f}{\partial X_5\partial
X_i\partial X_j}=0,\,i=0,\,1,\,2\,\,\forall j.
\end{eqnarray}
These are $15$ linearly independent conditions on the coefficients of $f$ for any value of $t$. In particular, any cubic form $f_0$ satisfying the conditions with $t=0$ is a limit of forms $f$ that satisfy the conditions for $t\not=0$ as $t$ tends to $0$.
So also in the degenerate situation, the forms lie in the irreducible hypersurface $D_{rk3}$.

\end{proof}

Note the following other characterization of $D_{rk3}$:
\begin{lemma}  \label{lemmatouselater19aout} A cubic form belongs to $D_{rk3}$ if it has
a net (a $3$-dimensional vector space) of partial derivatives which
are all singular in a given point $p$.
\end{lemma}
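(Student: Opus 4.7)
The plan is to exploit a small self-duality among the polar quadrics of a cubic form $f\in S^{3}V$: for any $v,p\in V^{*}$, the quadric $\partial_{v}f\in S^{2}V$ is singular at the point $p\in\PP(V^{*})$ if and only if $\partial_{p}f$ is singular at $v$. Both conditions amount to the single vanishing $\partial_{p}\partial_{v}f=0$ in $V$, which is symmetric in $p$ and $v$ by commutativity of partial derivatives. So I will first record this duality as the key identity.

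Equipped with it, I will reorganize the hypothesis as follows. Assuming $\partial_{w}f$ is singular at $p$ for every $w$ in a $3$-dimensional $W\subset V^{*}$, the symmetry above says that the single quadric $Q:=\partial_{p}f\in S^{2}V$ is singular at every point of $W$; equivalently, its singular locus in $\PP(V^{*})$ contains the projective plane $\PP(W)$.

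I will then finish by a standard rank count: a quadric in $\PP^{5}$ of rank $r$ has singular locus of projective dimension $5-r$, so the inclusion $\PP(W)\subset\mathrm{Sing}(Q)$ forces $\mathrm{rank}(Q)\leq 3$. Hence $\partial_{p}f$ is a partial derivative of $f$ of rank at most $3$, and $f\in D_{rk3}$ via the irreducibility of that divisor established in Lemma \ref{lediv118aout}.

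There is essentially no obstacle here: once the commutativity identity is noted, the argument collapses to a one-line reorganization of the hypothesis. The only minor verification is that $\partial_{p}f$ is a genuine (nonzero) partial derivative of $f$, which is automatic on $\mathbb{P}^{55}_{\mathrm{reg}}$, since otherwise $p$ would be a singular point of the cubic $F$.
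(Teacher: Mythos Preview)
Your proof is correct and follows essentially the same approach as the paper's: both arguments hinge on the commutativity $\partial_{p}\partial_{w}f=\partial_{w}\partial_{p}f$, which converts the hypothesis ``a net of partial derivatives singular at $p$'' into ``$\partial_{p}f$ has a $3$-dimensional space of singular directions'', hence $\mathrm{rank}(\partial_{p}f)\leq 3$. Your added remark that $\partial_{p}f\neq 0$ on $\PP^{55}_{\mathrm{reg}}$ is a nice touch, though strictly speaking rank $0$ is still $\leq 3$.
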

\begin{proof} The fact that $f$ has
a net of partial derivatives which
are singular in a point $p$ is equivalent to the vanishing
$\partial_p(\partial_{w_i}f)=0$ for three independent vectors $w_i$.
This holds if and only if  $\partial_{w_i}(\partial_pf)=0$ for
$i=1,2,3$, which in turn is equivalent to the fact that the partial
derivative $\partial_pf$ has rank $\leq3$.
\end{proof}

{\bf The divisor $D_{copl}$.} The subset $D_{copl}\subset\PP(S^3V)_{reg}$
is the Zariski closure of the set of forms $f$ which can be written as
\begin{eqnarray}\label{eqn4218aout}f=\sum_{1=1}^{10}a_i^3,
\end{eqnarray} such that four of the linear forms $a_i\in V$
are coplanar.

\begin{lemma} \label{lediv218aout} $D_{copl}$ is an irreducible divisor in
 $\PP(S^3V)_{reg}$.
 \end{lemma}
\begin{proof} The set $D_{copl}$ is irreducible, since it is dominated
by the irreducible algebraic set parameterizing the $10$ linear
forms, four of which are coplanar. If we count dimensions, we find
that this last algebraic set has dimension $56$. However, we observe
that a general cubic form $g$ in $3$ variables has a two dimensional variety
of power sums $VSP(E,4)$, where $E=V(g)$.  If $f=\sum_{1=1}^{i=10}a_i^3$, where
$a_1,\ldots,a_4$ are coplanar, we  have
\begin{eqnarray}\label{eqn5218aout} f=g(b_1,b_2,b_3)+\sum_{i=5}^{i=10}a_i^3,
\end{eqnarray}
 where the $a_i$'s
for $i\leq 4$ are linear combinations of the $b_i$'s. As there is a
2-parameter family of ways of writing $g$ as a sum of four powers of
linear forms in the $b_i$'s, we conclude that there is a 2-parameter
family of ways of writing $f$ as in (\ref{eqn4218aout}). This proves
that $D_{copl}$ has codimension at least $1$.
To show that it actually is a divisor, we exhibit an affine subfamily of $D_{copl}$ of codimension one in the space of cubic forms.
In fact if we let $$b_1=x_0+b'_0,b_2=x_1+b'_2,b_3=x_2+b'_3$$
and $$a_5=x_0-x_1+x_3+x_4+a'_5, a_6=x_1+x_2-x_3-x_4-x_5+a'_6,$$
$$a_7=x_2+x_3-x_4+x_5+a'_7, a_8=x_3+a'_8, a_9=x_4+a'_9,
a_{10}=x_5+a'_{10},$$
with $b'_1,..,b'_3,a'_5,...,a'_{10}\subset V$,
then
$$f=g(b_1,b_2,b_3)+\sum_{i=5}^{i=10}a_i^3$$
 belongs to $D_{copl}$ for every 9-tuple of linear forms $b'_1,..,b'_3,a'_5,...,a'_{10}$.
 The summands in $f$ that are linear in the $b'_i$ and $a'_j$ span the tangent space to this  family at the origin, where $b'_1=...=a'_{10}=0$.  This space may thus be shown, with {\it Macaulay2} \cite{MAC2}, to have dimension $54$.    Therefore the family is a divisor.
\end{proof}

{\bf The divisor $D_{IR}$.} This is the divisor constructed by Iliev
and Ranestad in \cite{IR}. It parameterizes the cubic fourfolds
$F_{IR}(S)$ mentioned in the introduction, associated to $K3$
surfaces $S$ which are complete intersections of the Grassmannian
$G(2,6)\subset \PP^{14}$ with a $\PP^8_S$. More precisely, these
cubic fourfolds are defined as follows: Dual to  $\PP^8_S$, we get a
$\PP^5_S\subset \check{\PP^{14}}$. The dual projective space
$\check{\PP^{14}}$ contains
 the Grassmannian of lines $\check{G}(2,6)$ and for generic choice of $\PP^5_S$, the intersection
$\PP^5_S\cap \check{G}(2,6)$ is empty. It is then proved in \cite{IR} that
the ideal of cubic forms on  $\check{\PP^{14}}$ vanishing on $\check{G}(2,6)$
restricts to a hyperplane in
$H^0(\PP^5_S,\mathcal{O}_{\PP^5_S}(3))$. This hyperplane in turn
determines a cubic fourfold in $\check{\PP^5_S}$.

 For later use in the paper,  we
recall and extend a characterization from \cite{IR} of apolar length
$10$ subschemes to cubic forms $[f]\in D_{IR}$ in terms of quartic surface scrolls, i.e. rational normal surface scrolls in $\PP^5$.
\begin{lemma}\label{DIR}  Let $f$ be a cubic form of rank $10$, such that $[f]\in D_{IR}$.
Then the general  subscheme of length $10$ apolar to $f$ is the
intersection of two quartic surface scrolls. In particular $f$ is apolar to
a quartic surface scroll.

Conversely, if $f$ is a cubic form of rank $10$ apolar to a quartic
surface scroll, then $[f]\in D_{IR}$.
\end{lemma}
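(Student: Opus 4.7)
The approach is to leverage Theorem~\ref{3.7}, which identifies $VSP(F_{IR}(S),10)$ with ${\rm Hilb}_2(S)$, in order to describe length-$10$ apolar subschemes of $F_{IR}(S)$ via their associated pair of points on $S$, and to build quartic scrolls on both sides of this correspondence.

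For the forward direction, let $F = F_{IR}(S)$ be of rank $10$ and let $Z \subset \check{\PP^5_S}$ be a general length-$10$ apolar subscheme. By Theorem~\ref{3.7}, $Z$ corresponds to a pair $\{p_1,p_2\} \subset S \subset \GG(2,6)\cap \PP_S$. I would associate to each $p_i$ a quartic scroll $X_{p_i} \subset \check{\PP^5_S}$ apolar to $F$, constructed from the $2$-plane $\Lambda_i \subset V_6$ labelling $p_i$ by a natural Grassmannian/Schubert-type recipe: take a projection to $\check{\PP^5_S}$ of the Schubert locus of $2$-planes incident to $\Lambda_i$, and verify the resulting variety is $2$-dimensional of degree $4$. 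Apolarity of $X_{p_i}$ to $F$ should follow from the Pfaffian description of $F_{IR}(S)$ recalled before the statement: the three quadratic generators of the ideal of $X_{p_i}$ arise by restriction of Pfaffians vanishing on $\GG(2,6)$, so they annihilate $F$ under the apolarity pairing. The final step is to prove $Z = X_{p_1}\cap X_{p_2}$ scheme-theoretically; a Bezout count $4\cdot 4 = 16$, corrected by the length of the residual intersection supported on the secant line $\overline{p_1 p_2}$ (where both scrolls acquire a common line through $p_1$ and $p_2$), should produce the correct length $10$.

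For the converse, suppose $F$ is a rank-$10$ cubic apolar to a quartic scroll $X \subset \check{\PP^5}$. Since $X$ is defined by the $2 \times 2$ minors of a $2 \times 3$ matrix of linear forms (the standard determinantal presentation of a smooth quartic scroll in $\PP^5$), the three defining quadrics, together with the apolarity condition $I_X\subset I_F$, single out a $\PP^5 \subset \PP^{14} = \PP(\bigwedge^2 V_6^*)$ in distinguished position with respect to $\check{\GG}(2,6)$. I would reconstruct the orthogonal $8$-plane $\PP_S \subset \PP^{14}$ from this configuration and show that $S := \GG(2,6) \cap \PP_S$ is a K3 surface with $F = F_{IR}(S)$, namely that the Pfaffian cubics vanishing on $\GG(2,6)$ restrict on $\PP_S^\perp = \check{\PP^5_S}$ to the given cubic $F$.

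The main obstacle is this converse: translating the data of a single apolar quartic scroll back into a Grassmannian/Pfaffian configuration. This will require careful bookkeeping of how the determinantal syzygies of $X$ pair against $F$ under apolarity; a possible alternative is to use that $D_{IR}$ is irreducible of codimension one (\cite{IR}) together with a degree-count argument to show that the locus of cubics admitting an apolar quartic scroll is contained in, and hence coincides with, $D_{IR}$.
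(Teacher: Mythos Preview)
Your forward direction is essentially the paper's approach: the paper simply cites \cite{IR} (Lemma 2.9 and the proof of Theorem 3.7 there) for the statement that $S$ parameterizes quartic scrolls apolar to $F$ and that each secant line to $S$ gives a length-$10$ apolar subscheme as the intersection of the two corresponding scrolls. Your Schubert description of the scrolls and the Pfaffian argument for apolarity are the right ingredients. The B\'ezout correction from $16$ to $10$ is not how this is handled in \cite{IR}, and would need more care (the two scrolls do not intersect properly), but the conclusion is correct.

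The converse is where there is a real gap. Your primary plan, to reconstruct $\PP_S$ from the determinantal presentation of a single apolar scroll, is not what the paper does and would be hard to make precise: a single scroll does not carry enough data to recover $S$. Your alternative, the dimension count, is indeed the paper's route, but you are missing the step that makes it work. The incidence variety of pairs (quartic scroll $X$, cubic $F$ apolar to $X$) has dimension $29+27=56$; to conclude that its image in $\PP^{55}$ is a divisor you must show that the fiber over a general $F$ in the image is $2$-dimensional. The paper supplies this via the Gale transform: given one apolar scroll, there is a $2$-dimensional family of length-$10$ apolar subschemes on it, and by \cite[Corollary 3.3]{EP} each such subscheme has Gale transform in $\PP^3$ lying on a smooth quadric, whose two rulings correspond to two quartic scrolls through the subscheme. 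This produces a $2$-dimensional family of apolar scrolls, hence the image has dimension $54$; being irreducible and containing $D_{IR}$, it equals $D_{IR}$. Without the Gale-transform step your dimension count does not bound the image below codimension one, and the containment you assert (apolar-to-scroll locus $\subset D_{IR}$) is exactly the statement to be proved, not a consequence of a degree count.
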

\begin{proof} The first part is shown in \cite {IR}:
Let $S=\GG(2,6)\cap\PP^8_S$ be the $K3$-surface section associated to $F=V(f)$,
 i.e. $F=F_{IR}(S)$ in the notation of loc. cit.  Then $S$ parameterizes quartic surface scrolls apolar to $f$, and
the two scrolls corresponding to a pair of points on $S$ intersect in a length $10$ subscheme apolar to $f$
(Lemma 2.9 and the proof of Theorem 3.7 loc.cit.).

For the second part, if $f$ is apolar to a quartic surface scroll, then by
dimension count, $f$ has a $2$-dimensional family of length $10$
apolar subschemes on this scroll.  The general such subscheme $Z$  has a
Gale transform in $\PP^3$ contained in a smooth
 quadric surface \cite[Corollary 3.3]{EP}.  Furthermore, the two rulings
 in the quadric surface correspond to two quartic surface scrolls that contain $Z$,   see \cite[Example 3.4]{EP}, where an analogous case is explained.  Therefore $f$ is apolar to a $2$-dimensional family of quartic surface scrolls.
 Now, the family of quartic surface scrolls in $\PP^5$ is irreducible of dimension $29$, and each scroll
 is apolar to a $27$-dimensional space of cubic forms, so there is an irreducible $54$-dimensional family of cubic forms
 apolar to some quartic surface scroll.  This family must  coincide with the divisor $D_{IR}$ since it contains it.
\end{proof}
 \section{Apolarity and syzygies \label{3.2}}

In this section we first show that for a general cubic fourfold $F\subset \PP(V^*)$, the
variety $VSP(F,10)$ is defined as the zero locus, inside the Hilbert
scheme, of a section of a vector bundle.  In fact the variety
$VSP(F,10)$ is then entirely contained in the set $U\subset {\rm
Hilb}_{10}(\PP(V))$ of zero-dimensional subschemes imposing
independent conditions on cubics (Proposition \ref{allapolar}), and $Z$ is apolar to $F$ for every $[Z]\in VSP(F,10)$. Furthermore,
after defining the cactus rank of a cubic fourfold $F$  (Definition \ref{defcactusrank}), we note that any scheme of minimal length apolar to $F$, is locally Gorenstein, and show, as a consequence, that $VSP(F,10)$ does not meet the singular locus of ${\rm Hilb}_{10}(\PP(V))$ for a general $F$ (Proposition \ref{corlea}).
We also show that if $F$ is general,
then the cactus rank coincides with the rank and $VSP(F,10)$
contains all schemes of length $10$ that are apolar to $F$ (Corollary \ref{connected}).

In the second part of this section we give a criterion (Lemma \ref{rank9}) for a cubic form $f$ to have
cactus rank $10$
 in terms of a syzygy variety of its apolar ideal $I_f$.
 When a cubic fourfold $F\subset \PP(V^*)$ has cactus rank $10$,
  then the union of the apolar subschemes of length~$10$ forms a hypersurface $V_{10}(F)$
   in $\PP(V)$.  We will show (Lemma \ref{leautiliser}) that
  $V_{10}(F)$ is a syzygy variety of $I_f$, and analyze its singular locus.
At the end of this section we show (Proposition \ref{corlea}) that $VSP(F,10)$ does not meet the singular locus of ${\rm Hilb}_{10}(\PP(V))$ for a general $F$.

The results of this section that are used later, are formulated in two lemmas and two propositions.
Lemmas \ref{rank9} and \ref{leautiliser} will be used in Section \ref{sec3} to prove that a general
$[f]\in D_{V-ap}$ is apolar to finitely many Veronese surfaces, from
which we will deduce that $D_{V-ap}$ is a divisor.  Propositions \ref{allapolar} and \ref{corlea} are applied in Section \ref{sec3} to show that for a general $[f]\in D_{V-ap}$,
the length $10$ subscheme $Z$ is apolar to $f$ for every $[Z]\in  VSP(F,10)$ and is a smooth point in  ${\rm Hilb}_{10}(\PP(V))$.

\subsection{Apolar subschemes of length 10}

\begin{proposition}\label{allapolar} Let $F\subset \PP(V^*)$ be a cubic
 fourfold defined by a general form $f \in {\rm Sym}^3V$.
  Then any length
$10$ subscheme $[Z]\in VSP(F,10)$ imposes independent conditions to
cubics, i.e. $h^1(\mathcal{I}_Z(3))=0,$
and is apolar to $f$, that
is $I_Z(3)\subset H_f$.

Furthermore,  if there is a codimension $1$ component of the set of
smooth cubic fourfolds not satisfying this conclusion, it must be
one of the two divisors $D_{rk3}$ and $ D_{copl}$  introduced in the
previous section.
\end{proposition}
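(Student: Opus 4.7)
The plan is to split the proof into two separate parts: a continuity implication for the first assertion, followed by a dimension count stratifying the bad locus for the second. For the first assertion, I would argue as follows. Let $[Z]\in VSP(F,10)$ impose independent conditions on cubics, so that $\dim I_Z(3)=46$. By definition of $VSP$, $Z=\lim_t Z_t$ with $Z_t=\{[l_1(t)],\ldots,[l_{10}(t)]\}$ reduced and $f=\sum_i\lambda_i(t)l_i(t)^3$; by Lemma \ref{3.4}, $I_{Z_t}(3)\subset H_f$ for each $t$. In the Grassmannian of $46$-dimensional subspaces of $S^3V^*$, the $I_{Z_t}(3)$'s have a limit subspace $I^0\subset I_Z(3)$, and $I^0=I_Z(3)$ by dimension equality. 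Passing to the limit in the closed condition yields $I_Z(3)\subset H_f$, i.e., $Z$ is apolar to $f$.

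For the second assertion, let $B\subset\text{Hilb}_{10}(\check{\PP^5})$ be the closed locus of length-$10$ subschemes with $\dim I_Z(3)>46$, and form the incidence
\[
J:=\overline{\{(Z,[f])\in B\times \PP(S^3V)_{reg}:\,[Z]\in VSP(F,10)\}}.
\]
We need to show that every codimension-$1$ component of $\pi_2(J)\subset \PP^{55}$ is contained in $D_{rk3}\cup D_{copl}$. The strategy is to stratify $B$ by configuration type and bound the corresponding images. For $Z\in B$, any Waring approach $Z_t\to Z$ yields a $46$-dimensional limit $I^0\subset I_Z(3)$ whose orthogonal $V_Z\subset S^3V$ is $10$-dimensional and strictly contains the proper apolar space $A_Z$; the condition $[Z]\in VSP(F,10)$ forces $f\in V_Z$, giving an upper bound $\dim\pi_2(J|_{B_\alpha})\leq\dim B_\alpha+9+(\text{moduli of }I^0)$ on each stratum $B_\alpha$.

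Two strata could produce codimension-$1$ contributions, corresponding exactly to the two listed divisors. First, when $Z$ contains a length-$3$ subscheme supported on a line through a point $p$: apolarity forces $\partial_p f$ to annihilate a $3$-dimensional space of derivations, equivalently $f$ has a net of partial derivatives singular at $p$; by Lemma \ref{lemmatouselater19aout} this means $f\in D_{rk3}$, with the parameter count mirroring Lemma \ref{lediv118aout}. Second, when the degeneration collapses $4$ of the $10$ points onto a common plane $\Pi$: the restriction mechanism of the introduction identifies the $4$-point problem with the Waring problem for the restricted ternary cubic, and the $2$-dimensional family $VSP(g,4)$ provides exactly the extra moduli needed to fill a divisor, namely $D_{copl}$ via Lemma \ref{lediv218aout}. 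All other bad configurations (longer collinear subschemes, $Z$ contained in a quadric threefold, higher-order fat points, and so on) must be verified to contribute images of codimension $\geq 2$ in $\PP^{55}$ by explicit dimension counts.

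The main obstacle is completing this case analysis in full generality: the formal Waring limit $I^0$ depends on the tangent direction of the approach $Z_t\to Z$, so for each bad stratum one must bound both $\dim B_\alpha$ and the set of admissible $I^0$'s actually arising as genuine Waring limits. Ruling out hidden codimension-$1$ contributions from the deeper bad strata — non-reduced thickenings of length $\geq 3$ off of a line, $Z$'s lying on quadric or rational scroll surfaces, and the like — requires the bulk of the work and is where the proof becomes delicate.
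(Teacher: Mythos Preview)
Your first paragraph is correct and is essentially the remark the paper makes immediately after the statement: once $Z$ imposes independent conditions on cubics, apolarity follows by passing to the limit in the closed condition $I_{Z_t}(3)\subset H_f$. But this is the easy implication; the content of the proposition is that $Z$ \emph{does} impose independent conditions for generic $f$, and that the failure locus is controlled.

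Your sketch for the second part has a genuine gap: the two strata you name are not strata of the bad locus $B$ at all. Three collinear points impose independent conditions on cubics (a line carries a $4$-dimensional space of cubics), and four coplanar points impose independent conditions even on quadrics. So neither ``$Z$ contains a length-$3$ collinear subscheme'' nor ``four points become coplanar'' forces $\dim I_Z(3)>46$. The claimed mechanism ``apolarity forces $\partial_p f$ to annihilate a $3$-dimensional space of derivations'' does not follow from a length-$3$ collinear subscheme of $Z$. The divisors $D_{rk3}$ and $D_{copl}$ in the paper arise not from configurations of $Z$ but from genericity hypotheses on $f$ needed to make an entirely different argument run; you have not located where the divisorial contributions actually come from.

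The paper's route is quite different from a direct stratification of $\text{Hilb}_{10}$. Assuming $f\notin D_{rk3}$, the map $q_f:\PP(V)\to\PP(Q_f^*)$ given by the quadrics $Q_f=P(f)^\perp$ is an embedding with no trisecant lines. Lemma~\ref{leb2} then forces any bad $Z$ to contain a length-$\geq 5$ subscheme on a line $\Delta$, and one takes the $11$-dimensional limit $K=\lim I_{Z_t}(2)\subset Q_f\cap I_\Delta(2)$. The contradiction comes from a \emph{degree-$5$} multiplication bound (Lemma~\ref{propHgen}): for generic $f$, every such $K$ satisfies $\operatorname{codim}(S^3V^*\cdot K\subset S^5V^*)\leq 9$, whereas semicontinuity from $Z_t$ gives $\geq 10$. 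This bound is proved by a Koszul resolution on the blow-up of $\PP^5$ along $\Delta$ (Sublemma~\ref{lemm3ppooints residuels}). The divisor $D_{copl}$ only appears in Sublemma~\ref{le2307}, as the locus of $f$ for which some $K$ has base locus with residual-to-$\Delta$ part of length $\geq 4$; the ensuing case analysis (Claim~\ref{claim21aout}) is over pairs $(W,\Gamma')$ with $W\subset V^*$ a $3$-plane and $\Gamma'$ a line-plus-four-points, not over $Z$'s. None of the key ingredients --- the line $\Delta$, the passage to degree $5$, the Koszul argument on the blow-up --- are visible in your framework.
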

Note that the second statement follows from the first using Lemma \ref{3.4}
and the fact that the condition $I_Z(3)\subset H_f$ is a closed
condition on the open set $U\subset {\rm Hilb}_{10}(\PP(V))$
of zero-dimensional subschemes imposing independent conditions to
cubics.

The proof of Proposition \ref{allapolar} is postponed until later in this section.
 The proposition 
 will be crucial in the study of the schematic
structure of $VSP(F,10)$, for $f$ satisfying the above conditions.
To see this, we first consider finite subschemes of minimal length apolar to $f$.
A form $f$ of rank $10$ may be apolar to subschemes of length less than $10$.  This motivates the
notion of  {\it cactus rank} of $f$:
\begin{definition} \label{defcactusrank} The cactus rank of a form $f$ or equivalently of the hypersurface $F=V(f)\subset{\bf
P}^n$ is the minimal length of a $0$-dimensional subscheme $Z$ of
$\check{{\bf P}^n}$ which is apolar to $f$ (resp. $F$).
\end{definition}

\begin{remark}\label{BC}\hfill
{\rm
\begin{enumerate}
\item Buczy\'{n}ska and Buczy\'{n}ski showed in
\cite[Proposition 2.2, Lemma 2.3]{BB} that a finite subscheme $Z$, that is apolar to $f$ and has length equal to its cactus rank, is locally Gorenstein.
 \item    Casnati, Jelisiejew and Notari have shown that any local Gorenstein scheme of length
   at most $13$ is smoothable (cf. \cite[Theorem A]{CJN}).
      \end{enumerate} }
   \end{remark}
  Since the smooth apolar schemes form an open set in its component of the Hilbert scheme, we get:
  \begin{lemma} \label{cactusvsrank}
 If $F$ is a general cubic fourfold of rank $10$, then the cactus rank of $F$ is also $10$. 
\end{lemma}
\begin{proof} Since Gorenstein schemes of length $\leq 9$ are smoothable, cubic forms $f$ of cactus rank $\leq 9$ lie in the closure of forms of rank $\leq 9$.  But the closure of the set of forms of rank $\leq 9$ is a proper subset of the set of cubic forms,  so the general form of rank $10$ must also have cactus rank $10$.
\end{proof}

The Proposition \ref{allapolar} provides a criterion for $VSP(F,10)$ to avoid the singular locus of
${\rm Hilb}_{10}\PP(V)$.

\begin{proposition}\label{corlea}
Let $V=\CC^6$, and let $F$ be a fourfold defined by a cubic form
$f\in {\rm Sym}^3V$  with no partial derivative of rank $\leq3$.
If $f$ has cactus rank $10$ and $Z$ is apolar to $f$ for every $[Z]\in  VSP(F,10)$,
then $VSP(F,10)$ does not intersect  the singular
locus of ${\rm Hilb}_{10}(\PP(V))$.
\end{proposition}
\begin{proof} Let  $[Z]\in  VSP(F,10)$, then, by  Remark \ref{BC}, the scheme $Z$ is locally Gorenstein.
Consider the morphism $q_f:\PP(V)\to \PP(Q_f^*)$ defined by the space of quadrics $Q_f$
that are apolar to $F$.  Then the linear span of the image $q_f(Z)$ has, by  Lemma \ref{lenew25juillet}, dimension $2$ or $3$.
Since $f$ has no partial of rank $\leq3$, the morphism $q_f$ is, by Lemma \ref{q_f}, an embedding, so the scheme $Z$ is embeddable in $\PP^3$.
 By \cite{HK} and  \cite[Corollary 2.6]{CN}, the corresponding point $[Z]$ is smooth in the Hilbert scheme.
\end{proof}

By Remark \ref{BC}, the open set $U_G\subset U\subset {\rm Hilb}_{10}(\PP^5)$ of length $10$ locally Gorenstein subschemes that impose independent conditions to cubics is contained in the irreducible component of the smooth subschemes.
  
\begin{corollary} \label{connected} Let $F=V(f)$ be a general cubic fourfold. Then $VSP(F,10)$ is   
the zero locus
 of a section $\sigma_f$ of the vector bundle $\mathcal{E}$
on $U_G$ of rank $46$ with fiber $I_Z(3)^*$.  In particular, $VSP(F,10)$ admits a natural smooth and connected scheme structure and contains all subschemes of length $10$ that are apolar to $F$.
\end{corollary}
\begin{proof} Indeed, let  $\sigma_f$ be the section of $\mathcal{E}$
given by $Z\mapsto f^*_{\mid I_Z(3)}$, where $f^*$ denotes the
linear form on ${\rm Sym}^3V^*$ corresponding to $f$. Then
$\sigma_f$  vanishes on $VSP(F,10)$ by Proposition
\ref{allapolar}.  The set $U_G$ is irreducible and the set of sections $\sigma_f$ clearly has no basepoints.  By Proposition \ref{corlea}, the general section vanishes only in the smooth locus of $U_G$, so the zero locus of $\sigma_f$ is smooth and connected for general $F$.  
\end{proof}

The proof of Proposition \ref{allapolar}
will need a few preparatory lemmas.

For a cubic form $f\in S^3V$ such that $F=V(f)$ is not a cone,  let  $P(f)\subset \PP(S^2V)$ be the
space of partial derivatives of $f$ and $Q_f=P(f)^{\bot}\subset
S^2V^*$.  Then $P(f)$ is $6$-dimensional and hence ${\rm dim}\,Q_f=15$.
Note that $Q_f=[H_f:V^*]$,
where $H_f\subset S^3V^*$ is the hyperplane defined by $f^*$; indeed we may
identify the space of partials $P(f)$ with the image $V^*(f)\subset S^2V$, so  if $q\in S^2V^*$, then $q\cdot V^*(f)=0$ if and only if
$q(P(f))=0$.

Consider now a subscheme $Z\subset \mathbb{P}^5$ of length $10$.  Since $Z$ imposes at most $10$ conditions on quadrics, the space $I_Z(2)$ of quadrics in the ideal has dimension at least $11$, with equality for an open set of schemes $Z$.   Likewise, the ideal is generated in degree $2$, for an open set of length $10$ schemes $Z$: If $Z$ is the intersection of a rational normal quintic curve and a quadric, then $I_Z(2)$ has dimension $11$ and generate the ideal $I_Z$.  Therefore this is the case also for a general $Z$.

Thus, in particular, if $F$ is a general cubic fourfold and $[Z]\in VSP(F,10)$ is general, then $I_Z(2)$ has dimension $11$ and generate the ideal $I_Z$.
 Furthermore, by Lemma \ref{3.4},  $I_Z(2)\subset Q_f$.
It follows that the rank of the evaluation map $$Q_f\rightarrow
H^0(\mathcal{O}_Z(2))$$ is at most $4$ for a general $[Z]\in
VSP(F,10)$, and by semicontinuity of the rank, the same remains true
for any $[Z]\in VSP(F,10)$.  Therefore
 \begin{lemma}\label{Z_f} Let $f\in S^3V$  be a cubic form such that $F=V(f)$ is not a cone, and let $[Z]\in VSP(F,10)$, then
${\rm dim}\,I_Z(2)\cap Q_f\geq 11$.
  \end{lemma}

 The linear system of quadrics $Q_f$ gives a rational map
\[
q_f:\PP(V)\dashrightarrow \PP(Q_f^*),
\]
defined as
 the composition of the Veronese map $\PP(V)\to\PP(S^2V)$ and the projection from the subspace  $P(f)\subset \PP(S^2V)$.

The following lemma is an immediate consequence of this description.
 \begin{lemma}\label{q_f}\hfill
 \begin{enumerate}
\item\label{x} $q_f$ is a morphism if and only if $f$ has no partials of rank $\leq 1$.

\item\label{xx}$q_f$ is an embedding if and only if $f$ has no partials of rank $\leq 2$.

\item\label{xxx} $q_f$ is an embedding and
  the image $X_f:=q_f(\PP(V))$ contains no subscheme of length $3$ contained in a line
if and only if
  $f$ has no partial derivative of rank  $\leq 3$, i.e. $f\notin D_{rk3}$.
  \end{enumerate}
  \end{lemma}

  This lemma allows us to find possible schemes $Z$ such that ${\rm dim}\,I_Z(2)\cap Q_f\geq 11$.
  \begin{lemma}\label{lenew25juillet} Let $f$ be a cubic form with no partial derivative
  of rank $\leq 3$, let $X_f:=q_f(\PP(V))$ be the image by the map $q_f$ and let
 $P\subset \PP(Q_f^*)$ be a $\PP^3$.
 If $X_P:=P\cap X_f$ contains a curve, then  $X_P$ is the image by $q_f$ of a line and a residual finite subscheme.

   In particular, if $F=V(f)$, $[Z]\in VSP(F,10)$ and $Z_f=q_f(Z)$, then the linear span of $Z_f$ is a $\PP^2$ or a $\PP^3$,
   and if $I_Z(2)\cap Q_f$ is contained in the ideal of a curve, this curve is a line.    \end{lemma}
  \begin{proof}  Indeed, by Lemma \ref{q_f} \eqref{xxx}, $q_f$ is an embedding and the image $X_f$ has no trisecant line.  Since it is a linear projection of the second Veronese embedding, every curve in the image has even degree.    Consider now a $3$-space $P\subset \PP(Q_f^*)$  and the intersection $X_P=P\cap X_f$.
  Since every surface in $P$ contains a line or has a trisecant line,  $X_P$ cannot contain a surface. Furthermore, the only curves in $P$ of even degree with no trisecant lines are the conics and the complete intersections of two quadric surfaces (e.g. \cite{ball}). 
  But a complete intersection of two quadric surfaces is not the second Veronese
  embedding of a curve.  Therefore, if $X_P$ contains a curve, $X_P$ is the union of a conic and a residual finite subscheme.

  If  $[Z]\in VSP(F,10)$, then ${\rm dim}\,I_Z(2)\cap Q_f\geq 11$ by Lemma \ref{Z_f}, so the span $\langle Z_f \rangle $ is at most a $\PP^3$. On the other hand, $Z_f$ must span at least a
  plane, since $X_f$ has no trisecant line, so that $3\geq{\rm dim}\,\langle Z_f \rangle \geq 2$.  The linear span  $\langle Z_f \rangle $ intersects $X_f$ in the zero locus of $I_Z(2)\cap Q_f$, so the last claim in the lemma now follows from the first.
  \end{proof}

Notice that the span $\langle Z_f \rangle $, whether $Z$ is apolar to $f$ or not,  has dimension $2$ (resp. $3$) if and
only if $I_Z(2)\cap Q_f$ has dimension $12$ (resp. $11$).

\begin{lemma}\label{leb2}   Let $V=\CC^6$, and let
 $f\in {\rm Sym}^3V$ be a cubic form  with no partial derivative of rank $\leq3$.
 Let $Z\subset \PP(V)$ be a subscheme of length $10$, and assume that
 $I_Z(3)$ has codimension at most $9$ in ${\rm Sym}^3V^*$.  Let $\Gamma\subset \PP(V)$ be the zero locus of the space of quadrics  
$I_Z(2)\cap Q_f$.

\begin{enumerate}
\item  If ${\rm dim}\,I_Z(2)\cap Q_f=12$, then $\Gamma$ is a line. 
 \item  If ${\rm dim}\,I_Z(2)\cap Q_f=11$,  then  $\Gamma$ is the union of a line and a residual finite subscheme.
    \end{enumerate}
   \end{lemma}
\begin{proof}
Let $Z\subset \PP(V)$ be a subscheme of length $10$ and assume that $I_Z(3)$ has codimension at most $9$ in ${\rm Sym}^3V^*$.
Notice first that ${\rm dim}\,I_Z(2)\geq 12$.
In fact, the subscheme $Z$ does not impose independent conditions on cubics, i.e.
$h^1(\mathcal{I}_Z(3))>0.$  The multiplication by a general linear form $h$ defines an exact  sequence of sheaves
\[
 0\to \mathcal{I}_{Z}(2)\to \mathcal{I}_Z(3)\to \mathcal{O}_{H}(3)\to 0,
\]
where $H=\{h=0\}$.   Since  $h^1(\mathcal{O}_H(3))=0$, $h^1(\mathcal{I}_Z(3))>0$ implies that
$h^1(\mathcal{I}_Z(2))>0$, and hence that ${\rm dim}\,I_Z(2)\geq 12$.

Now, assume furthermore that ${\rm dim}\,I_Z(2)\cap Q_f\geq 11$.
Let $\Gamma\subset \PP(V)$ be the zero locus of the space of quadrics  
$I_Z(2)\cap Q_f$.  Then, $q_f(\Gamma)$ is contained in a $\PP^3$, so by Lemma \ref{lenew25juillet}, $\Gamma$ is either a line and a
residual finite subscheme, or $\Gamma$ is finite.

Assume first that $\Gamma$ is finite.  Then $Z$ spans at least a $\PP^4$ in $\PP(V)$,  since any finite intersection of quadrics in a $\PP^3$ has length at most $8$.
Let $Z_0$ be a maximal length subscheme of $Z$ that spans a $\PP^3$
in $\PP(V)$.   The length of $Z_0$ is then at most $8$, and at least $4$ since it spans $\PP^3$.

The residual scheme $Z_1=Z\setminus Z_0$ therefore has length at least $2$ and at most $6$.   
Let $H=\{h=0\}$ be a general hyperplane that contains $Z_0$.
Then  multiplication by $h$ defines a  sequence of sheaves of ideals
\[
 0\to \mathcal{I}_{Z_1}(2)\to \mathcal{I}_Z(3)\to \mathcal{I}_{H,Z_0}(3)\to 0,
\]
which is exact. Since $h^1(\mathcal{I}_Z(3))>0$, either
$h^1(\mathcal{I}_{H,Z_0}(3))>0$  or  $h^1(\mathcal{I}_{Z_1}(2))>0$.

We claim  $h^1(\mathcal{I}_{Z_1}(2))=0$.
  Since $\Gamma$ is a finite intersection of quadrics, the subscheme $Z_1$ contains no subscheme of length $3$
contained in a line, and no subscheme of length $5$ contained in a plane.
By the maximality of $Z_0$, it has at most a subscheme of length $5$ in  a $\PP^3$.

Therefore $Z_1$ either has minimal length in its span, in which case the claim follows, or it has length $d$ in a $\PP^{d-2}$ with $d=4,5$ or $6$.  If $Z_1$ has length $4$ in a plane it is a complete intersection of two curves of degree $2$, so again $h^1(\mathcal{I}_{Z_1}(2))=0$. 
If $Z_1$ has length $5$ and spans a $\PP^3$  or length $6$ and spans a $\PP^4$, it contains a subscheme $Z_2$ of length $3$ or $4$ in a plane $P_2$.  The residual scheme $Z_{1,2}$  to $Z_2$ in $Z_1$ has length $1, 2$ or $3$.
 Multiplication by a general linear form $h$ that contains the  plane $P_2$  defines an exact sequence of sheaves
\[
 0\to \mathcal{I}_{Z_{1,2}}(1)\to \mathcal{I}_{Z_1}(2)\to \mathcal{I}_{H,Z_{2}}(2)\to 0.
\]
Now, $h^1(\mathcal{I}_{Z_{1,2}}(1))=h^1(\mathcal{I}_{H,Z_{2}}(2))=0$, so we infer $h^1(\mathcal{I}_{Z_1}(2))=0$.

We may therefore assume
$h^1(\mathcal{I}_{H,Z_0}(3))>0$.  If $P=\langle Z_0\rangle$, then, by further restriction, also $h^1(\mathcal{I}_{P,Z_0}(3))>0$.  If $Z_0$ has length $4$ or $5$, we may argue as for $Z_1$ above that $h^1(\mathcal{I}_{Z_0}(2))=0$  and hence also $h^1(\mathcal{I}_{Z_0}(3))=0$. So we may assume that $Z_0$ has length at least $6$.
Since $Z_0$ is contained in a finite intersection of quadrics, a general net of these quadrics defines a complete intersection $Y$ in $P$ that contains $Z_0$.  Then $Y$ has length $8$, and contains a subscheme of length at most $2$ residual scheme to $Z_0$.  If $Z_0=Y$, then $h^1(\mathcal{I}_{H,Z_0}(3))=0$, a contradiction.  If $Z_0$ has length $7$ it is residual to a point $p$ in $Y$. Let $X$ be a cubic surface that contains $Z_0$ but not $Y$.  Then multiplication by the form defining $X$ defines two exact sequences
\[
 0\to \mathcal{I}_{p}\to \mathcal{I}_{Y}(3)\to \mathcal{I}_{X,Z_0}(3)\to 0
\]
and
\[
 0\to \mathcal{O}_{P}\to \mathcal{I}_{P, Z_0}(3)\to \mathcal{I}_{X,Z_0}(3)\to 0.
\]
From the first we deduce that $h^1(\mathcal{I}_{X,Z_{0}}(3))=0$, and so by the second  $h^1(\mathcal{I}_{P, Z_{0}}(3))=0$, a contradiction.

If $Z_0$ has degree $6$,
it contains a subscheme $Z_2$ of length $3$ or $4$ in a plane $P_2$.  The residual scheme $Z_{0,2}$  to $Z_2$ in $Z_0$ has length $2$ or $3$.
 Multiplication by the linear form $h$ that defines the  plane $P_2$  defines an exact sequence of sheaves
\[
 0\to \mathcal{I}_{P,Z_{0,2}}(2)\to \mathcal{I}_{P,Z_0}(3)\to \mathcal{I}_{P_2,Z_{2}}(3)\to 0.
\]
Now, $h^1(\mathcal{I}_{P,Z_{0,2}}(2))=h^1(\mathcal{I}_{P_2,Z_{2}}(3))=0$, so we infer $h^1(\mathcal{I}_{P,Z_0}(3))=0$, a contradiction.

 Therefore $\Gamma$ contains a line $\Delta$.  Let $Z_\Delta=Z\cap \Delta$.
 The line $\Delta$ is mapped to a conic $q_f(\Delta)$.  If ${\rm dim}\,I_Z(2)\cap Q_f=12$, then $Z_f=q_f(Z)$ spans only a plane, and the image $q_f(\Gamma)$ has a subscheme of length $3$ in a line, unless $Z$ is entirely contained in $\Delta$, i.e. $Z_\Delta=Z$ and $\Gamma=\Delta$.
\end{proof}

\begin{proof}[Proof of Proposition \ref{allapolar}] Let $[Z]\in
VSP(F,10)$. We assume, for contradiction, that $Z$ does not impose
independent conditions on cubics. Assuming $f$ is regular  and has no
partial derivative of rank $\leq3$, we already proved that
 $12\geq {\rm dim}\,I_Z(2)\cap Q_f\geq 11$.
By Lemma \ref{leb2}, we conclude in both cases that there is a line
$\Delta$ such that $I_Z(2)\subset I_\Delta(2)$, so that $$I_Z(2)\cap
Q_f \subset I_\Delta(2)\cap Q_f.$$ Note also that, under the same
assumptions on $f$, the image $q_f(\Delta)$ is a conic curve in a plane that does not have any residual intersection with $X_f=q_f(\PP(V))$.
Thus ${\rm dim}\,I_\Delta(2)\cap Q_f=12$ and the zero locus of $Q_{f,\Delta}: =I_\Delta(2)\cap Q_f$ is $\Delta$.

 Since $[Z]\in
VSP(F,10)$, there exists a flat family of subschemes
$$(Z_t)_{t\in B},\,Z_t\subset \mathbb{P}^5, \,\text{length}\,Z_t=10,$$
where $B$ is a smooth curve,  such that $Z_0=Z$ for some point $0\in
B$ and for general $t\in B$, $Z_t$ is apolar to $f$ and imposes $10$
independent conditions to quadrics. The subspace
$J_t:=I_{Z_t}(2)\subset Q_f$ is thus of codimension $4$. Let
$J\subset Q_f\cap I_Z(2)$ be the specialization of $J_t$ at $t=0$.
Then ${\rm dim}\,J=11$ and
 $J\subset Q_{f,\Delta}=I_\Delta(2)\cap Q_f$ so that
 $J$ is a hyperplane in $Q_{f,\Delta}$.

 On the other hand, note that
 by semicontinuity of the rank,
 we have for any $k\geq 0$
 $${\rm codim}\,(S^kV^*\cdot J\subset S^{k+2}V^*)\geq {\rm codim}\,(S^kV^*\cdot J_t\subset S^{k+2}V^*)$$
 $$\geq {\rm
 codim}\,(I_{Z_t}(k+2)\subset S^{k+2}V^*)=10.$$

 The contradiction that concludes the proof of Proposition  \ref{allapolar} is derived from the following statement:
 \begin{lemma}\label{propHgen} Assume $f$ is general. Then for any line
 $\Delta \subset \mathbb{P}^5$,
 and for any hyperplane $J\subset Q_{f,\Delta}:=I_\Delta(2)\cap Q_f$,
we have
$${\rm codim}\,(S^3V^*\cdot J \subset S^5V^*)\leq 9.$$
Furthermore, the locus of smooth cubic fourfolds not satisfying this
condition has codimension $>1$ away from the union of $D_{rk3}$ and
$D_{copl}$.
 \end{lemma}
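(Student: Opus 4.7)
The plan is to reformulate the codimension bound in terms of the ideal of $\Delta$, reduce to exhibiting a single witness by upper semicontinuity, and then control the failure locus using the two known divisors $D_{rk3}$ and $D_{copl}$.

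Let $V_4^*\subset V^*$ denote the four linear forms vanishing on $\Delta$. Since $I_\Delta$ is generated by $V_4^*$ we have
$$I_\Delta(2)\cdot S^3V^* = V_4^*\cdot S^4V^* = I_\Delta(5),$$
of codimension $6$ in $S^5V^*$. Because $K\subset Q_{f,\Delta}\subset I_\Delta(2)$, the product $K\cdot S^3V^*$ lies in $I_\Delta(5)$, so the bound to prove is equivalent to
$${\rm codim}\bigl(K\cdot S^3V^*\subset I_\Delta(5)\bigr)\leq 3.$$
The function $(f,\Delta,K)\mapsto \dim S^5V^*-\dim(K\cdot S^3V^*)$ is upper semicontinuous on the incidence variety $\mathcal{I}$ of triples with $K$ a hyperplane in the $12$-dimensional space $Q_{f,\Delta}$, so the locus $C\subset\mathcal{I}$ where this codimension exceeds $9$ is closed.

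For the first assertion I would exhibit a single smooth cubic $f_0\notin D_{rk3}\cup D_{copl}$ for which every pair $(\Delta,K)$ verifies the bound; a generic sum of ten cubes of linear forms in sufficiently general position is a natural candidate, and the verification reduces to a finite-dimensional linear algebra computation on the multiplication map $K\otimes S^3V^*\to I_\Delta(5)$. Semicontinuity then extends the bound to a Zariski open subset of $\PP^{55}$.

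The hard part is the refined statement, namely that the image of $C$ in $\PP^{55}$ has codimension at least $2$ outside $D_{rk3}\cup D_{copl}$. The strategy is to stratify $C$ by the geometric origin of the rank drop of the multiplication map. A rank drop arising from a degeneration of the image $X_f=q_f(\PP(V))$ along $\Delta$ is, by Lemma \ref{lenew25juillet} combined with Lemma \ref{lemmatouselater19aout}, controlled by the existence of a rank-$\leq 3$ partial derivative of $f$, placing $f$ in $D_{rk3}$. A rank drop forced by a limiting apolar configuration in which four of the linear forms become coplanar corresponds, via Lemma \ref{lediv218aout}, to $f\in D_{copl}$. For the remaining strata one compares the fibre dimension of $C$ over the Grassmannian of lines (of dimension $8$) and the $\mathbb{P}^{11}$ of hyperplanes in $Q_{f,\Delta}$ against the number of independent conditions the rank drop imposes on $f$, and shows these amount to at least two, giving codimension $\geq 2$ in $\PP^{55}$. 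The most delicate point is precisely this bookkeeping: disentangling the contributions of $\Delta$, $K$, and $f$ to the rank-drop locus so that each residual contribution not captured by $D_{rk3}$ or $D_{copl}$ genuinely costs at least two independent parameters on $f$.
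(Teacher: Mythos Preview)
Your reformulation of the bound as ${\rm codim}(K\cdot S^3V^*\subset I_\Delta(5))\leq 3$ is correct, and the semicontinuity framing is reasonable, but the proposal is an outline rather than a proof: neither the witness for the generic case nor the stratification for the refined statement is actually carried out. More seriously, the mechanism you describe for linking the failure locus to $D_{copl}$ is not the right one. Saying that ``a rank drop forced by a limiting apolar configuration in which four of the linear forms become coplanar corresponds to $f\in D_{copl}$'' presupposes the conclusion; nothing in the setup of Lemma~\ref{propHgen} refers to power-sum presentations, so you need an intrinsic bridge from the multiplication-map rank drop to $D_{copl}$.

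The paper supplies that bridge through the \emph{base locus} $\Gamma$ of the linear system $K\subset I_\Delta(2)$. The argument splits on the length of the scheme $\gamma$ residual to $\Delta$ in $\Gamma$. When ${\rm length}(\gamma)\leq 3$ (and $f\notin D_{rk3}$), a Koszul-resolution computation on the blow-up of $\PP^5$ along $\Delta$ and $\gamma$ shows $S^3V^*\cdot K=I_\Gamma(5)$, which has codimension $\leq 9$ (Sublemma~\ref{lemm3ppooints residuels}). When ${\rm length}(\gamma)\geq 4$, one extracts a subscheme $\Gamma'=\Delta\cup\gamma'$ with ${\rm length}(\gamma')=4$ and shows this forces
$$\dim\bigl(P(f)\cap I_{\Gamma'}(2)^\perp\bigr)\geq 3,$$
i.e.\ a $3$-dimensional subspace $W\subset V^*$ of partial derivatives of $f$ lying in the span of $\Gamma'$ under the Veronese. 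This is the condition that does the work: in the generic configuration ($\Gamma'\cap\PP(W^\perp)=\emptyset$) one computes the space of such $f$ explicitly as $S^3W^\perp\oplus I_{\Gamma'}(3)^\perp$, and a direct decomposition shows these are exactly the cubics that are sums of ten cubes with four coplanar summands, i.e.\ $D_{copl}$. The degenerate configurations (where $\PP(W^\perp)$ meets $\Gamma'$) are then handled by an eleven-case dimension count (Claim~\ref{claim21aout}), each giving codimension $\geq 2$. Your sketch does not locate this pivot on the residual base locus, and without it there is no way to see why $D_{copl}$, rather than some other divisor, is what appears.
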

\end{proof}
\begin{proof}[Proof of Lemma \ref{propHgen}]
The proof has two parts, that both depend on the following  property of the zero locus $\Gamma\supseteq \Delta$ of $J$.

Let $\tau_0:X_0\rightarrow \PP^5$ be the blow-up of $\PP^5$ along
$\Delta$. Then $J$ provides a space $J'$ of sections  of
$L_0:=\tau_0^*(\mathcal{O}_{\mathbb{P}^5}(2))(-E_\Delta)$ on $X_0$,
where $E_\Delta$ is the exceptional divisor of $\tau_0$.
\begin{sublemma} \label{isoresiduels} Assume $f$ is regular and has no partial
 derivative of rank $\leq3$. Let $J\subset I_\Delta(2)\cap Q_f$ be a hyperplane with zero locus $\Gamma\supseteq \Delta$.
 Let $H\subset\mathbb{P}^5$ be a hyperplane that does not contain $\Delta$.  Then the subscheme of $H\cap \Gamma$ that has support on $\Delta$ has length at most $2$.
  \end{sublemma}
 \begin{proof}
Since
$f$ has no partial derivative of rank $\leq 3$, the line $\Delta$ is the zero locus of $Q_{f,\Delta}$, i.e. $Q_{f,\Delta}$
generates $\mathcal{I}_\Delta(2)$ at any point of $\Delta$.   Let $E_{\Delta,x}$ be the fiber over $x=H\cap\Delta$ in $E_\Delta$.  Then $E_{\Delta,x}\cong\PP^3$  and
$L_0|_{E_{\Delta,x}}\cong{\mathcal O}_{\PP^3}(1)$. The restriction of the sections $J'$ generates at least a hyperplane of sections in this line bundle,
 so their zero locus on $E_{\Delta,x}$ is at most a point.  So $J$ restricted to $H$, defines a scheme at $x$ that is the intersection of quadrics and is contained in a line, so it has length at most $2$.
 \end{proof}
Now, we first deal with the case
where the zero locus of $J\subset I_\Delta(2)\subset S^2V^*$ has a finite subscheme of length at most $3$ residual to $\Delta$.
 In this case, we have the following:
 \begin{sublemma} \label{lemm3ppooints residuels} Assume $f$ is regular and has no partial
 derivative of rank $\leq3$. Let $J\subset I_\Delta(2)\cap Q_f$ be as above, with zero locus $\Gamma\supseteq \Delta$.
 Assume the scheme
 $\gamma$
 residual to $\Delta$ in $\Gamma$ is finite of length at most $3$. Then
\begin{eqnarray}\label{eq2407}S^3V^*\cdot J=I_\Gamma(5).
\end{eqnarray} In particular, ${\rm codim}\,(S^3V^*\cdot J\subset S^5V^*)\leq9$.
 \end{sublemma}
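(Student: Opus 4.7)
The containment $S^3V^*\cdot K \subset I_\Gamma(5)$ is immediate since every $q\in K$ lies in $I_\Gamma(2)$. I focus on the reverse inclusion, for which my plan has three steps.

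First, the configuration $\Gamma = \Delta\cup\gamma$, a line plus a zero-dimensional scheme of length at most $3$, has a $2$-regular ideal $I_\Gamma$ for a generic position of $\gamma$: the only non-automatic vanishing is $h^1(\mathcal{I}_\Gamma(1))=0$, which follows once one verifies the surjectivity of the restriction $H^0(\mathcal{O}_{\mathbb{P}^5}(1))\to H^0(\mathcal{O}_\Gamma(1))$ (the source has dimension $6$, while the target has dimension $2+\mathrm{length}(\gamma)\leq 5$). Castelnuovo-Mumford regularity then yields
$$I_\Gamma(5) = S^3V^*\cdot I_\Gamma(2),$$
so it suffices to prove $S^3V^*\cdot I_\Gamma(2) = S^3V^*\cdot K$.

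Second, I would compare $K$ with $I_\Gamma(2)$ via apolarity. Generically the equality $I_\Gamma(2)\cap Q_f = K$ holds: indeed, $\gamma$ is by construction the locus where passing from $I_\Delta(2)\cap Q_f$ to the codimension-one subspace $K$ enlarges the base locus, so any quadric in $I_\Delta(2)\cap Q_f$ that vanishes on $\gamma$ must already lie in $K$. This identifies $I_\Gamma(2)/K$ with a subspace of $S^2V^*/Q_f \cong P(f)^*$, a $6$-dimensional space naturally dual to the partial derivatives of $f$.

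The main obstacle is the third step: to show that for each representative $q\in I_\Gamma(2)\setminus K$, the full product $S^3V^*\cdot q$ already lies in $S^3V^*\cdot K$. One should combine the apolarity relation $V^*\cdot Q_f \subset H_f$ with the complete-intersection structure of $I_\Delta$, exploiting that any $q\in I_\Delta(2)$ admits a decomposition $q = \sum_{i=0}^3 L_ix_i$ when $\Delta$ is the zero locus $x_0=x_1=x_2=x_3=0$; the Koszul relations on $I_\Delta$ combined with the class of $q$ in $P(f)^*$ should produce the required expressions. A viable alternative is to verify the equality $\dim S^3V^*\cdot K = \dim I_\Gamma(5)$ on a specific example satisfying the hypotheses and to conclude by semicontinuity on the parameter space of triples $(f,\Delta,K)$.

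The ``in particular'' codimension bound $\mathrm{codim}(S^3V^*\cdot K \subset S^5V^*) \leq 9$ follows from (\ref{eq2407}) together with the cohomological estimate $\dim I_\Gamma(5)\geq 252 - h^0(\mathcal{O}_\Gamma(5)) \geq 252-(6+3)=243$, using that $\gamma$ has length at most $3$ and is disjoint from $\Delta$.
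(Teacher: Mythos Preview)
Your outline has a genuine gap: step 3 is the entire content of the sublemma, and you leave it essentially unaddressed. Reducing to $S^3V^*\cdot I_\Gamma(2)=S^3V^*\cdot K$ is not a simplification, because $K$ has codimension roughly $4$ in $I_\Gamma(2)$ (not $1$: the quadrics in $I_\Gamma(2)$ that are not in $Q_f$ account for most of the gap), so the identification $I_\Gamma(2)/K\hookrightarrow P(f)^*$ in your step 2 is off. Your suggested mechanisms (the linear Koszul relations on $I_\Delta$, the inclusion $V^*\cdot Q_f\subset H_f$) do not obviously produce the needed elements of $S^3V^*\cdot K$, and the semicontinuity alternative only yields the statement for a \emph{generic} triple $(f,\Delta,K)$, whereas the sublemma (and its use in Lemma~\ref{propHgen}) requires it for \emph{every} $\Delta$ and $K$ once $f$ is fixed. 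There is also a smaller gap: your regularity argument and your final codimension count both assume $\gamma$ is disjoint from $\Delta$ and linearly general, which is not part of the hypothesis --- the residual scheme $\gamma$ may well be supported on $\Delta$.

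The paper's proof avoids both difficulties by working on the blow-up $\tau_0:X_0\to\PP^5$ along $\Delta$. The key input you are missing is that the hypothesis ``no partial derivative of rank $\leq 3$'' forces $Q_{f,\Delta}$ to generate $\mathcal{I}_\Delta$ along $\Delta$; hence the proper transform $K'\subset H^0(X_0,\tau_0^*\mathcal{O}(2)(-E_\Delta))$ has base locus $\gamma'$ of length $\leq 3$ mapping isomorphically to $\gamma$, regardless of whether $\gamma$ meets $\Delta$. After at most three further point blow-ups one obtains a \emph{base-point-free} $11$-dimensional system $K''\subset H^0(X,L)$, and then a standard Koszul-resolution argument, together with the vanishings $H^i(X,(-i-1)L+L')=0$ for $L'=\tau^*\mathcal{O}(5)(-E_\Delta-\sum E_i)$, shows that $K''$ generates $H^0(X,L')\cong I_\Gamma(5)$. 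This is precisely the step you were unable to carry out downstairs; passing to the blow-up is what makes the Koszul machinery applicable uniformly in $\gamma$.
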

\begin{proof} Let $\tau_0:X_0\rightarrow \PP^5$ be the blow-up of $\PP^5$ along
$\Delta$, and let, in the notation as above, 
$\gamma'$ be the zero-locus of $J'$ supported over $\gamma$.  As in the proof of Sublemma \ref{isoresiduels}, $\gamma'$ intersects the fiber in $E_\Delta$ over any point  of $\Delta$ in at most a point.  Via the blowup map $\tau_0$, the
subscheme $\gamma'$ is therefore isomorphic to the subscheme $\gamma$, and hence finite of length at most $3$.

Furthermore, we have
$$H^0(X_0,\tau_0^*(\mathcal{O}_{\mathbb{P}^5}(2))(-E_\Delta)\otimes
\mathcal{I}_{\gamma'})=H^0(X_0,L_0\otimes \mathcal{I}_{\gamma'})\cong
H^0(\PP^5,\mathcal{I}_\Gamma(2)),$$
$$H^0(X_0,\tau_0^*\mathcal{O}(5)(-E_\Delta)\otimes \mathcal{I}_{\gamma'})\cong
H^0(\PP^5,\mathcal{I}_\Gamma(5)).$$ It follows from the last
equality that (\ref{eq2407}) is equivalent to the fact that  $$
H^0(X_0,\tau_0^*\mathcal{O}(3))\cdot J'=
H^0(X_0,\tau_0^*\mathcal{O}(5)(-E_\Delta)\otimes
\mathcal{I}_{\gamma'})).$$
 Assume first that  $\gamma'$ is
curvilinear. It follows that by successively blowing-up at most
three points $x_1,\,x_2,\,x_3$ starting from $x_1\in X_0$, we get a
variety
$$\tau:X\rightarrow \PP^5,\,\tau_1:X\rightarrow X_0,$$ with three
exceptional divisors $E_i$ corresponding to the $x_i$'s and one
exceptional divisor $\tau_1^*E_\Delta$ over $E_\Delta$.  The $E_i$ are the pullbacks to $X$ of the exceptional divisor of the blow up at $x_i$ such that
the pull-backs $J''$ of the $J'$ gives rise to a base-point free
linear system of sections of
\begin{eqnarray}
\label{eqforL} L:=\tau^*\mathcal{O}(2)(-\tau_1^*E_\Delta-\sum_iE_i)
\end{eqnarray}
on $X$. Furthermore, we have
$$J''\subset H^0(X,L)\cong H^0(\PP^5,\mathcal{I}_\Gamma(2)),$$
$$H^0(X,\tau^*\mathcal{O}(5)(-\tau_1^*E_\Delta-\sum_iE_i))\cong
H^0(\PP^5,\mathcal{I}_\Gamma(5)).$$ We are thus reduced to prove
that the base-point free linear system $$J''\subset
H^0(X,\tau^*\mathcal{O}(2)(-\tau_1^*E_\Delta-\sum_iE_i))$$ generates
$H^0(X,\tau^*\mathcal{O}(5)(-\tau_1^*E_\Delta-\sum_iE_i))$. This is
done by a Koszul resolution argument. The Koszul resolution of the
surjective evaluation map
$$
J'' \otimes \mathcal{O}_X(-L)\rightarrow \mathcal{O}_X,$$
 gives us an exact complex with terms
$\bigwedge^iJ'' \otimes \mathcal{O}_X(-iL),\,0\leq i\leq5$. We twist this complex by
\begin{eqnarray}
\label{eqforLprime}L':=\tau^*\mathcal{O}(5)(-\tau_1^*E_\Delta-\sum_iE_i)
\end{eqnarray}
and the result then follows from the vanishing
\begin{eqnarray}
\label{eqvan2407}H^i(X,(-i-1)L+L')=0,\,i=1,\ldots,5. \end{eqnarray}
For $i=5$, we have by (\ref{eqforL}), (\ref{eqforLprime})
$$-6L+L'=\tau^*\mathcal{O}(-7)(5\sum_iE_i+5\tau_1^*E_\Delta),$$
while
$$K_X=\tau^*\mathcal{O}(-6)(4\sum_iE_i+3\tau_1^*E_\Delta).$$
Thus
$$H^5(X,-6L+L')=H^0(X,\tau^*\mathcal{O}(1)(-\sum_iE_i-2\tau_1^*E_\Delta))^*,$$
and the right hand side is  $0$.

For $i=4$, we have similarly
$$-5L+L'=\tau^*\mathcal{O}(-5)(4\sum_iE_i+4\tau_1^*E_\Delta),$$
hence
$$H^4(X,-5L+L')=H^1(X,\tau^*\mathcal{O}(-1)(-\tau_1^*E_\Delta))^*,$$
and the right hand side is $0$ since it is equal to
$H^1(\PP^5,\mathcal{I}_\Delta(-1))$.

For $i=3$, we have
$$-4L+L'=\tau^*\mathcal{O}(-3)(3\sum_iE_i+3\tau_1^*E_\Delta),$$
hence
$$H^3(X,-4L+L')=H^2(X,\tau^*\mathcal{O}(-3)(\sum_iE_i))^*.$$
Consider the strict transform $Y$ on $X$ of a general cubic fourfold whose pullback to $X_0$ contains $\gamma$.  Then  $\tau^*\mathcal{O}(-3)(\sum_iE_i)$ is the ideal sheaf of $Y$.  On the other hand $Y$ is regular so $H^1(Y,{\mathcal O}_Y)=0$, and hence $H^2(X,\tau^*\mathcal{O}(-3)(\sum_iE_i))=0$.

For $i=2$, we claim that
$$H^2(X,-3L+L')=H^2(X,\tau^*\mathcal{O}(-1)(2\sum_iE_i+2\tau_1^*E_\Delta)=0.$$
Consider the strict transform $Y$ on $X$ of a general hyperplane through $\Delta$ whose pullback to $X_0$ contains $\gamma$.  Then $Y$ is smooth and the multiplication by the form defining $Y$ fits in the exact sequence of sheaves
\[
 0\to \tau^*\mathcal{O}(-1)(2\sum_iE_i+2\tau_1^*E_\Delta) \to {\mathcal O}_X(\sum_iE_i+\tau_1^*E_\Delta)\to  {\mathcal O}_Y(\sum_iE_i+\tau_1^*E_\Delta)\to 0.
\]
But neither of the two invertible sheaves of exceptional divisors on the right have nonvanishing higher cohomology, so the claim follows.

For $i=1$, we get
$$H^1(X,-2L+L')=H^1(X,\tau^*\mathcal{O}(1)(\sum_iE_i+\tau_1^*E_\Delta)=0,$$
since $H^1(X,\tau^*\mathcal{O}(1))=H^1(X,E_1)=H^1(X,E_2)=H^1(X,E_3)=H^1(X,\tau_1^*E_\Delta)=0$.

When $\gamma$ is not curvilinear, and thus consists of one point
with noncurvilinear schematic
structure of length $3$, the argument is simpler:  Such a scheme $\gamma$ is the first order neighborhood of a point in a plane.
The image  $q_f(\Gamma)=q_f(\gamma)\cup q_f(\Delta)$ spans a $\PP^3$ and has by assumption no subscheme of length three contained in a line. But $q_f(\Delta)$ is a conic curve, while $q_f(\gamma)$ spans a plane that intersects this conic. Therefore there are lines that intersect the conic and $q_f(\gamma)$ in a subscheme of length $2$, a contradiction.
\end{proof}
To conclude the proof of Lemma \ref{propHgen}, we now show
\begin{sublemma}\label{le2307} Consider the cubic fourfolds
 $F=V(f)$ in the open dense subset $$\PP(S^3V)_{reg}\setminus (D_{rk3}\cup D_{copl}),$$ where $D_{copl}$ is the divisor introduced in Section \ref{sec1}.
 The subset of such fourfolds for which there exist a line $\Delta\subset \PP(V^*)$, and a hyperplane $J\subset
 I_\Delta(2)\cap Q_f$, such that the zero locus $\Gamma$ of $J$ has a subscheme residual to $\Delta$ of length $\geq 4$, has codimension $\geq 2$.
 
\end{sublemma}
 \begin{proof}  Note first that
 the scheme $\Gamma$ imposes at most $4$ conditions to $Q_f$, since
 $J\subset Q_f\cap I_\Gamma(2)$
 has codimension $4$ in $Q_f$.
 Therefore $q_f(\Gamma)$ is contained in the intersection of $X_f$ with a $\PP^3$, so, by Lemma \ref{lenew25juillet},
 the residual subscheme to $\Delta$ in $\Gamma$ is finite.
If it has length $\geq4$, we can  replace $\Gamma$ by a subscheme
$\Gamma'$ which is the union of $\Delta$ and a  residual scheme
$\gamma'$ of finite length $4$.  And, by Lemma \ref{q_f} (\ref{xxx}), we may assume $q_f(\Gamma')$ spans a $\PP^3$.
Note that $\Gamma'$, like $\Gamma$, is contained in an intersection of quadrics that is finite residual to the line $\Delta$,
so its intersection with a plane is either the line $\Delta$ or the union of the line $\Delta$ and one residual point, or it is a
scheme of finite length $\leq 4$.  Furthermore, the residual scheme
$\gamma'$ is not contained in another line $\Delta'$, since
otherwise the union of these two lines would be contained in
$\Gamma$. It follows that $\Gamma'$ imposes the maximal number of
conditions to the quadrics, namely $7$.
 Hence
  \begin{eqnarray}\label{eq09} \text{dim}\, (I_{\Gamma'}(2))=14,
   \end{eqnarray}
   and $J\subset I_{\Gamma'}(2)$ has dimension $11$.
   Since $q_f(\Gamma')$ spans a $\PP^3$,  the intersection $Q_f\cap I_{\Gamma'}(2)$ has dimension $11$, so it equals $J$.
 Now, $Q_f=P(f)^{\perp}$, so one concludes that
 \begin{eqnarray}\label{eq10aout}
 \text{dim}\,(P(f)\cap I_{\Gamma'}(2)^{\perp})=3,
 \end{eqnarray}
 where we recall
 that
 $P(f)$ is the space of partial derivatives of $f$.
The proof of Sublemma \ref{le2307} is done by a dimension count,
using (\ref{eq10aout}). We note that as we assumed that $f$ has no
partial derivative of rank $\leq3$, it has no net of partial derivatives
singular at a given point by Lemma \ref{lemmatouselater19aout}.
Thus, if $f$ satisfies (\ref{eq10aout}), the space
$I_{\Gamma'}(2)^{\perp}$ is not contained in the space of quadrics
singular at a given point. In particular, $\Gamma'$ must span $\PP(V)$.
This is equivalent to the vanishing
$H^1(\mathcal{I}_{\Gamma'}(1))=0$, which we assume from now on.

Equation (\ref{eq10aout})
determines a $3$-dimensional subspace $W\subset
V^*$, by
\[
W(f)=\{\partial_uf,\,u\in W\}=P(f)\cap I_{\Gamma'}(2)^{\perp}.
\]
 Given $W$ and $\Gamma'$, we define
$J^{\Gamma',W}\subset S^3V$ to be the linear space  of cubic forms
\[
J^{\Gamma',W}:=\{f\in S^3V| W(f)\subset I_{\Gamma'}(2)^{\perp}=Q_{\Gamma'}\}=(W\cdot I_{\Gamma'}(2))^{\perp}.
\]

The space $J^{\Gamma',W}$ contains the space
$J_{\Gamma'}:=I_{\Gamma'}(3)^\perp$ (which is generated by the cone
over the third Veronese embedding of $\Gamma'$) and the space
$S^3(W^{\perp})$.

Consider the subscheme
$$\Gamma'_W:=\PP(W^{\perp})\cap \Gamma'\subset\PP(V).$$
and assume first
that $\Gamma'_W=\emptyset$. In this
case, we claim that
\begin{eqnarray}\label{eqnpourKGammaprime} J^{{\Gamma'},W}=S^3(W^{\perp})\oplus J_{\Gamma'},
\end{eqnarray} so that ${\rm dim}\,J^{{\Gamma'},W}=18$.
Assuming the claim, we now observe  that elements
$$f\in J^{{\Gamma'},W}=S^3(W^{\perp})\oplus J_{\Gamma'}$$
fill-in, when the pair $(\Gamma', W)$ deforms, staying in general
position, the divisor $D_{copl}$ of Section \ref{sec1}. Indeed, the
general $\Gamma'$ is  the disjoint union of a line $\Delta=\PP(U)$
and 4 points $x_1,\ldots, x_4$. Then
$J_{\Gamma'}=S^3U+\langle x_1^3,\ldots,x_4^3 \rangle $ and thus $f\in
S^3(W^{\perp})\oplus J_{\Gamma'}$ belongs to $S^3(W^{\perp})+
S^3U+\langle x_1^3,\ldots,x_4^3 \rangle $. The component of $f$ lying in
$S^3(W^{\perp})$ is the sum of $4$ cubes of coplanar linear forms, and
the component of $f$ lying in $S^3U$ is the sum of $2$ cubes. Thus
$f$ is the sum of $10$ cubes of linear forms, $4$ of which are
coplanar.

In order to prove formula (\ref{eqnpourKGammaprime}), we dualize it
and note that it is equivalent to the equality
\begin{eqnarray}\label{eqnpourWI21aout}W\cdot I_{\Gamma'}(2)=(W\cdot S^2V^*)\cap I_{\Gamma'}(3).
\end{eqnarray}
The right hand side is equal to $I_{\Gamma'\cup \PP(W^{\perp})}(3)$.
As $\Gamma'\cap\PP(W^{\perp})=\emptyset$, the Koszul resolution of
the ideal sheaf $\mathcal{I}_{\PP(W^{\perp})}$ remains exact after
tensoring by $\mathcal{I}_{\Gamma'}$, which gives  the following
resolution of $\mathcal{I}_{\Gamma'\cup \PP(W^{\perp})}$:
$$0\rightarrow
\bigwedge^3W\otimes\mathcal{I}_{\Gamma'}(-3)\rightarrow
\bigwedge^2W\otimes\mathcal{I}_{\Gamma'}(-2)\rightarrow
W\otimes\mathcal{I}_{\Gamma'}(-1)\rightarrow
\mathcal{I}_{\Gamma'\cup \PP(W^{\perp})}\rightarrow0.$$ Twisting
with $\mathcal{O}(3)$ and applying the  vanishings $H^1(\mathcal{I}_{\Gamma'}(1))=0$ and $H^2(\mathcal{I}_{\Gamma'})=0$, we  get the desired equality
$W\cdot I_{\Gamma'}(2)=I_{\Gamma'\cup \PP(W^{\perp})}(3)$.

To conclude the proof of Sublemma \ref{le2307}, it only remains to
prove the following claim:
\begin{claim} \label{claim21aout} The set of cubic fourfolds in the open set $\PP^{55}_{reg}\setminus
D_{rk3}:=\PP(S^3V)_{reg}\setminus
D_{rk3}$ satisfying (\ref{eq10aout}) for a pair $(W,\Gamma')$ with
$\Gamma'_W=\Gamma'\cap\PP(W^{\perp})\not=\emptyset$ has codimension $\geq2$.
\end{claim}
\end{proof}
\begin{proof}[Proof of Claim \ref{claim21aout}]  Recall, from above, that $\Gamma'$ contains a line $\Delta$ and spans $\PP^5$.  Also, since $q_f(\Gamma')$  spans a $\PP^3$ and contains no subscheme of length $3$ in a line, every component of $\Gamma'$ that is not supported on $\Delta$ is curvilinear.
Consider the intersection $\Gamma'_W=\Gamma'\cap\PP(W^{\perp})$.

If $\Gamma'_W$ contains $\Delta$, then, since it is the intersection of quadrics and is finite residual to $\Delta$, the residual scheme to $\Delta$ in $\Gamma'_W$ is at most a point.

If $\Gamma'_W$ intersects $\Delta$ only in a point $x$, then $\Gamma'_W\cup \Delta$ spans at most a $\PP^3$, so $\Gamma'$ has a scheme of length at least $2$ residual to $\Gamma'_W\cup \Delta$.  By Sublemma \ref{isoresiduels}, the scheme $\Gamma'_W$ has a component of length at most $2$ supported on $x$ and a residual closed point $x'$.

If  $\Gamma'_W$ does not intersect the line $\Delta$, then $\Gamma'_W$ is curvilinear and has length at most $3$.

We observe that in each of the listed situations, if
$X,Y\in W$ are generically chosen, and $\PP^3_{X,Y}\supseteq
\PP(W^{\perp})$ is defined by $X$ and $Y$, we have
$$\Gamma'\cap\PP^3_{X,Y}=\Gamma'\cap\PP(W^{\perp})=\Gamma'_W.$$

We want to estimate the dimension of $J^{\Gamma',W}=(W\cdot I_{\Gamma'}(2))^{\perp}$, or equivalently of $W\cdot I_{\Gamma'}(2)$, since
$$\text{dim}\,J^{\Gamma',W}=56-\text{dim}\,(W\cdot I_{\Gamma'}(2)).$$

We consider the exact sequence
\[
 0\to \langle X,Y \rangle \cdot
I_{\Gamma'}(2) \to W\cdot I_{\Gamma'}(2)\to  W\cdot I_{\Gamma'}(2)_{\mid \PP^3_{X,Y}}\to 0.
\]
and observe that $\text{dim}\,W\cdot I_{\Gamma'}(2)_{\mid \PP^3_{X,Y}}= \text{dim}\,I_{\Gamma'}(2)_{\mid \PP^3_{X,Y}}$.
Therefore

\begin{eqnarray}\label{eqn12}\text{dim}\,(W\cdot I_{\Gamma'}(2))=\text{dim}\,(\langle X,Y \rangle \cdot
I_{\Gamma'}(2))+\text{dim}\,I_{\Gamma'}(2)_{\mid \PP^3_{X,Y}}.
\end{eqnarray}

Furthermore, consider the space of linear forms $[I_{\Gamma'}(2):\langle X,Y \rangle ]\subset V.$ 
Multiplication by the matrix $(X, -Y)$ and $(Y,X)^t$ respectively defines an exact sequence

\[
 0\to [I_{\Gamma'}(2):\langle X,Y \rangle ] \to I_{\Gamma'}(2)\oplus I_{\Gamma'}(2)\to  \langle X,Y \rangle \cdot
I_{\Gamma'}(2)\to 0.
\]
From this sequence, and the fact (\ref{eq09}) that $\text{dim}\,I_{\Gamma'}(2)=14$, we get
\[
\text{dim}\,(\langle X,Y \rangle \cdot
I_{\Gamma'}(2))=2\,\text{dim}\,I_{\Gamma'}(2)-\text{dim}\,[I_{\Gamma'}(2):\langle X,Y \rangle ]= 28-\text{dim}\,[I_{\Gamma'}(2):\langle X,Y \rangle ].
\]
 Putting this equality
together with the equation (\ref{eqn12}) we get:
$$\text{dim}\,J^{\Gamma',W}=28+\text{dim}\,[I_{\Gamma'}(2):\langle X,Y \rangle ]-\text{dim}\,I_{\Gamma'}(2)_{\mid
\PP^3_{X,Y}}.$$ We make now a case-by-case analysis. Recall that if
the scheme $\Gamma'_W$    has finite length, this length is $\leq3$
and if it contains the line $\Delta$, it contains at most one
reduced residual point.
  \begin{enumerate}

  \item If $\Gamma'_W=[l]$ is a  reduced point on $\Delta=\PP(U)$, which is not the support of an embedded point,
   then $\text{dim}\,[I_{\Gamma'}(2):\langle X,Y \rangle ]=0$
  and $\text{dim}\,I_{\Gamma'}(2)_{\mid
\PP^3_{X,Y}}=9$, so we get  ${\rm dim}\,J^{{\Gamma'},W}=19$. The
parameter space for such  $(W,\Gamma)'$s has dimension $7+28=35$, so
the subset of $\PP^{55}_{reg}$ satisfying equation (\ref{eq10aout})
with this condition on $(W,\Gamma)$ has dimension $\leq 35+18=53$.
\item If $\Gamma'_W=[l]$ is a reduced point on $\Delta=\PP(U)$, which is the support of an embedded point,
 then $\text{dim}\,[I_{\Gamma'}(2):\langle X,Y \rangle ]=1$
  and $\text{dim}\,I_{\Gamma'}(2)_{\mid
\PP^3_{X,Y}}=9$. Thus ${\rm dim}\,J^{{\Gamma'},W}=20$. As $\Gamma'$
has an embedded point on $\Delta$, the parameter space for $\Gamma'$
has dimension $27$, so  the parameter space for such  $(W,\Gamma)'$s
has dimension $7+27=34$. Thus the subset of $\PP^{55}_{reg}$
satisfying equation (\ref{eq10aout}) with this condition on
$(W,\Gamma)$ has dimension $\leq 34+19=53$.
  \item If $\Gamma'_W=[l]$ is a reduced  point not in  $\Delta$, then $\text{dim}\,[I_{\Gamma'}(2):\langle X,Y \rangle ]=1$
  and $\text{dim}\,I_{\Gamma'}(2)_{\mid
\PP^3_{X,Y}}=9$, so we get  ${\rm dim}\,J^{{\Gamma'},W}=20$. The
parameter space for such  $(W,\Gamma)'$s has dimension $6+28=34$, so
the subset of $\PP^{55}_{reg}$ satisfying equation (\ref{eq10aout})
with this condition on $(W,\Gamma)$ has dimension $\leq 34+19=53$.
 \item If $\Gamma'_W$  is a subscheme of length $2$ that  intersects
 $\Delta$ in one point, which is not the support of an embedded point,
   then $\text{dim}\,[I_{\Gamma'}(2):\langle X,Y \rangle ]=1$
  and $\text{dim}\,I_{\Gamma'}(2)_{\mid
\PP^3_{X,Y}}=8$, so we get  ${\rm dim}\,J^{{\Gamma'},W}=21$. The
parameter space for such  $(W,\Gamma)'$s has dimension $4+28=32$, so
the subset of $\PP^{55}_{reg}$ satisfying equation (\ref{eq10aout})
with this condition on $(W,\Gamma)$ has dimension $\leq 32+20=52$.

\item If $\Gamma'_W$  is a subscheme of length $2$ that  intersects
 $\Delta$ in one point, which is the support of an embedded point,
   then $\text{dim}\,[I_{\Gamma'}(2):\langle X,Y \rangle ]=2$
  and $\text{dim}\,I_{\Gamma'}(2)_{\mid
\PP^3_{X,Y}}=8$, so we get  ${\rm dim}\,J^{{\Gamma'},W}=22$. The
parameter space for such  $(W,\Gamma)'$s has dimension $3+27=30$, so
the subset of $\PP^{55}_{reg}$ satisfying equation (\ref{eq10aout})
with this condition on $(W,\Gamma)$ has dimension $\leq 30+21=51$.
 \item If $\Gamma'_W=z_2$  is a subscheme of length $2$ that does not intersect $\Delta$,
 then $\text{dim}\,[I_{\Gamma'}(2):\langle X,Y \rangle ]=2$
  and $\text{dim}\,I_{\Gamma'}(2)_{\mid
\PP^3_{X,Y}}=8$, so we get  ${\rm dim}\,J^{{\Gamma'},W}=22$. The
parameter space for such  $(W,\Gamma)'$s has dimension $3+28=31$, so
the subset of $\PP^{55}_{reg}$ satisfying equation (\ref{eq10aout})
with this condition on $(W,\Gamma)$ has dimension $\leq 31+21=52$.

        \item If $\Gamma'_W=\Delta$, then
        then $\text{dim}\,[I_{\Gamma'}(2):\langle X,Y \rangle ]=2$
  and $\text{dim}\,I_{\Gamma'}(2)_{\mid
\PP^3_{X,Y}}=7$, so we get  ${\rm dim}\,J^{{\Gamma'},W}=23$. The
parameter space for such  $(W,\Gamma)'$s has dimension $3+28=31$, so
the subset of $\PP^{55}_{reg}$ satisfying equation (\ref{eq10aout})
with this condition on $(W,\Gamma)$ has dimension $\leq 31+22=53$.

 \item If $\Gamma'_W$ is a subscheme of length $3$ that does not intersect $\Delta$,
  then $$\text{dim}\,[I_{\Gamma'}(2):\langle X,Y \rangle ]=3$$
  and $\text{dim}\,I_{\Gamma'}(2)_{\mid
\PP^3_{X,Y}}=7$, so we get  ${\rm dim}\,J^{{\Gamma'},W}=24$. The
parameter space for such  $(W,\Gamma)'$s has dimension $28$, so the
subset of $\PP^{55}_{reg}$ satisfying equation (\ref{eq10aout}) with
this condition on $(W,\Gamma)$ has dimension $\leq 23+28=51$.
 \item If $\Gamma'_W$ is a subscheme of length $3$ that  intersects $\Delta$ in a point
 $[l]$, which is the support of an embedded point,
  then $\text{dim}\,[I_{\Gamma'}(2):\langle X,Y \rangle ]=3$
  and $\text{dim}\,I_{\Gamma'}(2)_{\mid
\PP^3_{X,Y}}=7$, so we get  ${\rm dim}\,J^{{\Gamma'},W}=24$. The
parameter space for such  $(W,\Gamma)'$s has dimension $27$, so the
subset of $\PP^{55}_{reg}$ satisfying equation (\ref{eq10aout}) with
this condition on $(W,\Gamma)$ has dimension $\leq 27+23=50$.

\item If $\Gamma'_W$ is a subscheme of length $3$ that  intersects $\Delta$ in a point
 $[l]$, which is not the support of an embedded point,
  then $\text{dim}\,[I_{\Gamma'}(2):\langle X,Y \rangle ]=2$
  and $\text{dim}\,I_{\Gamma'}(2)_{\mid
\PP^3_{X,Y}}=7$, so we get  ${\rm dim}\,J^{{\Gamma'},W}=23$. The
parameter space for such  $(W,\Gamma)'$s has dimension $1+28=29$, so
the subset of $\PP^{55}_{reg}$ satisfying equation (\ref{eq10aout})
with this condition on $(W,\Gamma)$ has dimension $\leq 22+29=51$.
  \item If $\Gamma'_W$ is the union of the line $\Delta$ and an embedded point, then $\text{dim}\,[I_{\Gamma'}(2):\langle X,Y \rangle ]=3$
  and $\text{dim}\,I_{\Gamma'}(2)_{\mid
\PP^3_{X,Y}}=6$, so we get  ${\rm dim}\,J^{{\Gamma'},W}=25$. The
parameter space for such  $(W,\Gamma)'$s has dimension $28$, so the
subset of $\PP^{55}_{reg}$ satisfying equation (\ref{eq10aout}) with
this condition on $(W,\Gamma)$ has dimension $\leq 24+28=52$.
\end{enumerate}

   This proves the claim.

\end{proof}
The proof of  Lemma \ref{propHgen}, hence also of Proposition
\ref{allapolar}, is finished.
\end{proof}

\subsection{Syzygies}

Recall that the cactus rank of a cubic fourfold $F=V(f)$ is the minimal length of an apolar subscheme (Definition \ref{defcactusrank}).
We consider the syzygies of the ideal $I_f$, and give below a partial characterization of cubic fourfolds of cactus
rank $<10$, which we will  use  to prove  Proposition
\ref{exampleveronese} in the next section.

For a cubic fourfold
$F\subset \PP(V^*)$, let $V_{10}(F)\subset \PP(V)$ be the union of
subschemes of length $10$ which are apolar to $F$. We shall show, in Lemma \ref{leautiliser}, that when $F$ is general and of cactus rank $10$, then $V_{10}(F)$ is a hypersurface of degree $9$.
 As suggested to us by Hans Christian von Bothmer, to find the equation of $V_{10}(F)$, when it is a hypersurface,  we study the syzygies of the apolar ideal  $I_f$ and compare it with syzygies of the ideal of subschemes of length $9$ and $10$.

We are interested in the graded Betti numbers for the minimal free
resolution of the ideal $I_f$ for a general $f$, and for the ideal
of a general set of $9$ and $10$ points.
\begin{examp}\label{exa} The Betti numbers in the following examples have been computed with {\it Macaulay2} \cite{MAC2}.
\begin{enumerate}
\item  Let $f\in \CC[x_0,...,x_5]$ be the cubic form

\begin{align*}
f&= 2x_1^2x_2-2x_0x_2^2-2x_1^2x_3-2x_3^2x_4-x_0x_1x_5+2x_1x_2x_5\\
&+x_2^2x_5+x_2x_3x_5+3x_1x_4x_5+x_4^2x_5+3x_0x_5^2+x_3x_5^2 \\
    \end{align*}
Then the resolution of $I_f$ has Betti numbers:
  \[ \begin{array}{ccccccc}
 1 & - & - & - & - & -& - \\
 - & 15 & 35 & 21& - & -& - \\
 - & - & - & 21& 35 & 15& - \\
 - & - & - & - & - & -& 1
 \end{array}.
 \]

\item Let $Z_6$ be the $6$ coordinate points in $\PP^5$, then the resolution of the ideal of the $9$ points 
$$Z_9=Z_6\cup \{
(1:1:1:1:0:0), (0:0:1:-1:-1:1),(1:-1:0:0:1:1)\}$$ in $\PP^5$ has Betti numbers
\[ \begin{array}{ccccccc}
 1 & - & - & - & - & -\\
 - & 12 & 25 & 15& - & -\\
 - & - & - & 6& 10 & 3\\
 \end{array},
 \]

\item The resolution of the ideal of the $10$ points $Z_9\cup \{(0:1:-1:-1:0:1)\}$
in $\PP^5$ has Betti numbers
\[ \begin{array}{ccccccc}
 1 & - & - & - & - & -\\
 - & 11 & 20 & 5& - & -\\
 - & - & - & 16& 15 & 4\\
 \end{array}.
 \]

\end{enumerate}
\end{examp}
\begin{remark}\label{bettinu}
{\rm The graded Betti
numbers in the three resolutions  of Example  \ref{exa} are clearly minimal for apolar ideals of cubic forms and for ideals of $9$ (resp. $10$) points in $\PP^5$.
By semicontinuity we conclude that the Betti numbers are the same as in these examples for a general cubic form,
and for the ideal of $9$ (resp. $10$) general points in $\PP^5$.}
\end{remark}

 \begin{lemma}\label{rank9}  If $f$ is a cubic form with no partials of
  rank $\leq 3$, then $f$ has  cactus rank $\geq 9$.
 If furthermore the minimal free resolution of the apolar ideal $I_f$ has Betti numbers
  \[ \begin{array}{ccccccc}
 1 & - & - & - & - & -& - \\
 - & 15 & 35 & 21& - & -& - \\
 - & - & - & 21& 35 & 15& - \\
 - & - & - & - & - & -& 1
 \end{array}
 \]
 and $f$ has cactus rank $9$, then the $(35\times 21)$-matrix $M_2$ of linear second order syzygies has generic rank at most
 $20$. In other words, if  $f$ has no partial derivative of rank
 $\leq3$, the apolar ideal $I_f$ has Betti numbers as above and the matrix $M_2$ has generic rank $21$, then
 $f$ has cactus rank $10$.
 \end{lemma}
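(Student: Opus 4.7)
The lemma has two parts; I address them in turn.

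\emph{Part 1 (the cactus rank of $f$ is exactly $9$).} Since $f$ has no partial derivative of rank $\leq 3$, the map $q_f:\PP(V)\to \PP(Q_f^*)$ is a closed embedding onto $X_f$, and $X_f$ has no trisecant line (see the remarks preceding Lemma \ref{lenew25juillet}). For any apolar subscheme $Z$ of length $s$, apolarity gives $I_Z(2)\subset Q_f$, hence
\[
\dim \langle q_f(Z)\rangle = 14-\dim I_Z(2) \leq s-7
\]
(using $\dim I_Z(2)\geq 21-s$). For $s\leq 8$ the span has dimension at most $1$: a $0$-dimensional span forces $|Z|=1$ since $q_f$ is an embedding, while a line span forces $|Z|\leq 2$ by the no-trisecant property. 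In either case $s\leq 2$, but then $\dim I_Z(2)\geq 19 > 15 = \dim Q_f$ contradicts apolarity. Hence the cactus rank of $f$ is at least $9$, and combined with the hypothesis this gives equality.

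\emph{Part 2 (generic rank of $M_2$ is at most $20$).} Assume $I_f$ has the stated Betti numbers and fix an apolar $Z$ of length $9$, which exists by Part 1. Lifting $I_Z\subset I_f$ to a comparison map of minimal graded free resolutions and restricting to the linear strand, we obtain for generic $Z$ (whose Betti numbers are the listed ones) constant full-rank matrices
\[
\phi_1:\CC^{12}\hookrightarrow\CC^{15},\qquad \phi_2:\CC^{25}\hookrightarrow \CC^{35},\qquad \phi_3:\CC^{15}\hookrightarrow\CC^{21},
\]
satisfying the chain-map identity $M_2\circ\phi_3=\phi_2\circ M_2^Z$, where $M_2^Z$ is the analogous second-linear-syzygy matrix of $I_Z$. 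Thus $M_2$ sends the $15$-dimensional subspace $\operatorname{Im}(\phi_3)\subset\CC^{21}$ into the $25$-dimensional $\operatorname{Im}(\phi_2)\subset\CC^{35}$, a strong linear constraint on the columns of $M_2$. The non-generic case is handled by upper semi-continuity of matrix rank along the family of apolar schemes.

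\emph{From factorization to rank drop, and the obstacle.} To promote this constraint to the bound $\operatorname{rank}_K M_2\leq 20$, the natural tool is the Gorenstein self-duality $F_i\cong F_{6-i}^\vee(-9)$ of the minimal resolution of $S/I_f$: under this symmetry $M_2$ is identified (up to twist) with the transpose of the linear component of $d_4:F_4\to F_3$. The factorization constraint on $M_2$ then dualizes into a matching constraint on $d_4^{\mathrm{lin}}$, and the exactness relation $d_3\circ d_4=0$ produces a nontrivial $K$-linear syzygy among the $21$ columns of $M_2$. The main obstacle is tracking the $15{+}6$ decomposition of the second linear syzygies against the $12{+}3$ decomposition of the quadric generators through Gorenstein duality with enough precision to exhibit the syzygy; a concrete alternative is to specialize to an explicit cactus-rank-$9$ cubic (e.g.\ one apolar to $9$ general points of $\PP(V)$, where the Betti numbers can be checked in \texttt{Macaulay2}), verify the rank drop of $M_2$ numerically, and then propagate the bound to the whole irreducible locus of cubics with the stated Betti numbers by upper semi-continuity.
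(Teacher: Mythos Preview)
Your Part 1 is correct and matches the paper's argument essentially verbatim: use that $q_f$ is an embedding with image having no trisecant line to force any apolar scheme of length $\leq 8$ to map into a line, then rule out length $\leq 2$ by the dimension count $\dim I_Z(2)\geq 19>15=\dim Q_f$.

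Part 2 has a genuine gap. The chain-map identity $M_2\circ\phi_3=\phi_2\circ M_2^Z$ is correct, but by itself it does \emph{not} bound the generic rank of $M_2$. Over the fraction field $K$, it only says that the $15$-dimensional subspace $\operatorname{Im}\phi_3\subset K^{21}$ is mapped by $M_2$ into the $25$-dimensional $\operatorname{Im}\phi_2\subset K^{35}$; if $M_2^Z$ has full generic rank $15$, then $M_2$ is injective on $\operatorname{Im}\phi_3$, and the remaining $6$ columns can push the rank all the way to $21$. What you need is a nonzero element of $\ker_K M_2^Z$, i.e.\ that the $25\times 15$ matrix $M_2^Z$ has generic rank $\leq 14$. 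The paper obtains exactly this missing input: it first shows (via the embedding $q_f$) that the length-$9$ apolar scheme $Z$ is curvilinear, hence smoothable to $9$ general points $Z_g$; the Gale transform places $Z_g$ on an elliptic normal sextic $C_g$, whose resolution has a quadratic third-order syzygy in the linear strand; the inclusion $I_{C_g}\subset I_{Z_g}$ transports this syzygy to $I_{Z_g}$, and semicontinuity (in $Z$) pushes it to $I_Z$. This gives a nonzero quadratic vector $v$ with $M_2^Z v=0$, whence $M_2(\phi_3 v)=0$ and the rank of $M_2$ drops. Your Gorenstein-duality sketch does not produce this kernel element.

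Your fallback plan (compute one explicit cactus-rank-$9$ cubic with $\operatorname{genrank}M_2\leq 20$ and ``propagate by upper semicontinuity'') fails because the semicontinuity runs the wrong way. The generic rank $r(f)=\operatorname{rank}_K M_2^f$ is \emph{lower} semicontinuous in $f$ (if some $p\in\PP(V)$ witnesses $\operatorname{rank}M_2^{f_0}(p)\geq k$, the same $p$ works for nearby $f$), so $\{f:r(f)\leq 20\}$ is closed. A single example with $r\leq 20$ tells you nothing about the other points of the (not obviously irreducible) locus of cactus-rank-$9$ cubics with the given Betti numbers.
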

 \begin{proof}  Since $f$ has no partial derivatives of rank $\leq 3$, the map $q_f: \PP(V)\to X_f$ is a
 smooth embedding and $X_f$ has no trisecant lines, by Lemma \ref{q_f}.  Let $Z$ be an apolar subscheme of length at most $8$.
Since
  $I_Z(2)\subset Q_f=I_f(2)$ and $Q_f\subset S^2V$ has codimension $6$, the rank of the restriction map
  $Q_f\rightarrow H^0(\mathcal{O}_Z(2))$ is at most $2$. Hence
  $Z_f=q_f(Z)$ is contained in a line, and $X_f$ would have a trisecant line, a contradiction. Therefore $f$ has cactus rank at least $9$.

Assume next, $f$ has cactus rank $9$ computed by an apolar subscheme $Z\subset\PP(V)$ that consists of $9$ general points. We consider the Gale transform of $Z$ (cf. \cite{EP}).
 The Gale transform $Z'$ of $Z$ is a set of $9$ points in a plane, and $Z'$ is general since $Z$  is general.  In particular we may assume that  $Z'$  lies on a unique smooth cubic curve.  By \cite[Corollary 3.2]{EP}, the set of $9$ points $Z$ itself lies on this
curve reembedded as an elliptic sextic curve $E_Z$ in $\PP(V)$. The
Betti numbers of the minimal free resolution of the ideal of $E_Z$
are
 \[ \begin{array}{ccccccc}
 1 & - & - & - &  -\\
 - & 9 & 16 & 9 & -\\
 - & - & - & -& 1\\
 \end{array}.
 \]
Since $I_{E_Z}\subset I_Z$ and $I_Z\subset I_f$, by assumption, we get that $I_{E_Z}\subset I_f$, i.e. the elliptic sextic curve $E_Z$ is apolar to $f$.  The inclusion of the resolution of $I_{E_Z}$ in the resolution of $I_f$ displays a third order syzygy of the ideal $I_{E_Z}$ that is a third order syzygy for the linear strand of the resolution of the ideal $I_{f}$. In the resolution of $I_f$ the matrix $M_2$ therefore has generic rank at most $20$.

It remains to consider any cubic  form $f$ of cactus rank $9$ and no partials of rank at most $3$.  Let $Z$ be a length $9$ subscheme apolar to $f$.   
Now, by Remark \ref{BC}, the scheme $Z$ is locally Gorenstein and the limit of smooth schemes of length $9$, the form $f$ is likewise a limit of forms of cactus rank $9$ with a smooth apolar scheme of length $9$.  Therefore, by the previous argument, the matrix $M_2$ in the resolution of $I_f$ is the limit of matrices of generic rank at most $20$, so $M_2$ also has generic rank at most $20$.

\end{proof}

We analyze further the syzygies of elliptic normal sextic curves, to find the locus in $\PP(V)$ where the matrix $M_2$ in the resolution of $I_f$  drops rank.

First, an elliptic normal sextic curve $E$ lies in a smooth Veronese surface:  any of the four linear systems $|D|$ of degree $3$ on $E$ such that $|2D|$ is the linear system of hyperplane sections of $E\subset \PP(V)$, is the linear system of conic sections of $E$ in a smooth Veronese surface in $\PP(V)$.

\begin{lemma}\label{second order lin} Let $E$ be an elliptic normal sextic curve in $\PP^5$ and let $p\in \PP(V)\setminus E$.
 Then the ideal of $E\cup \{p\}$ has a unique second order linear syzygy that vanishes at $p$.

  If, in addition, $p$ is not contained in the secant variety of any of the four Veronese surfaces containing $E$, then the syzygy has rank $5$ and no ideal strictly contained in the ideal of $E\cup \{p\}$ has this syzygy.

 If $p\in \Sigma\setminus E$, where $\Sigma$ is a Veronese surface that contains $E$, then the second order linear syzygy that vanishes at $p$ is a syzygy for $I_\Sigma$, but no ideal strictly contained in $I_\Sigma$.
\end{lemma}
\begin{proof}
Let $p$ be in $\PP(V)\setminus E$.    The  minimal free resolution of $I_E$ is symmetric with Betti numbers
 \[ \begin{array}{ccccccc}
 1 & - & - & - &  -\\
 - & 9 & 16 & 9 & -\\
 - & - & - & -& 1\\
 \end{array}.
 \]
 The third order syzygy is therefore nonzero at the point $p$ outside $E$, and defines a unique  second order syzygy that vanishes at $p$.

 This syzygy is a syzygy among at most $5$ first order linear syzygies that also vanishes at $p$,
 and finally, these first order syzygies are linear syzygies among quadrics in the ideal of $E$ that vanish at $p$.
 Therefore the ideal of $I_{E\cup \{p\}}$ has a second order linear syzygy vanishing at $p$.

The secant variety of $E$ is the intersection of a pencil of determinental cubic hypersurfaces that are singular along $E$ (see~\cite{Fisher} {Theorem~1.3, Lemma~2.9}).
In fact, these hypersurfaces are defined by determinants of $(3\times 3)$-matrices with linear entries that have rank one along $E$.  Since $E$ is smooth, all the linear entries are nonzero.   Four of the cubic  hypersurfaces are secant varieties of Veronese surfaces that contain $E$.   If $p$ is not on any of these four hypersurfaces, then $p$ is not on the secant variety of $E$, and there is a unique cubic determinental hypersurface $Y$ that is singular along $E$ and contains $p$. We may assume that this hypersurface is defined by the determinant of the $(3\times 3)$-matrix  $A=\left(\begin{matrix} a_0&a_1&a_2\\ a_3&a_4& a_5\\ a_6&a_7&a_8\\ \end{matrix}\right)$ with linear entries $a_i$.   Since $p$ is not on any of the four secant varieties of a Veronese surface containing $E$, the matrix $A$ is not symmetric, so $p$ is in a unique plane in each of the two nets of planes in $Y$.  In particular we may assume that $p=V(a_0,a_1, a_2, a_4, a_7)$. Then the ideal of $E\cup \{p\}$ is generated by all the $(2\times 2)$-minors of $A$ except $a_3a_8-a_6a_5$, i.e. the quadrics $I_{E\cup \{p\}}=(a_1a_3-a_0a_4, a_2a_3-a_0a_5 ,a_2a_4-a_1a_5, a_1a_6-a_0a_7, a_2a_6-a_0a_8,a_4a_6-a_3a_7 , a_2a_7-a_1a_8, a_5a_7-a_4a_8)$.  These quadrics have the following matrices of first and second order  linear  syzygies that vanish at $p$:
 \[ S_1= \left(\begin{array}{ccccccc}
 -a_2 & 0 & a_7 & 0 &0 \\
 a_1& 0 & a_0 & a_7& 0\\
 -a_0 & 0 & 0 & 0&-a_7 \\
  0& -a_2 & -a_4 & 0& 0 \\
  0& a_1& 0 & -a_4& 0 \\
    0& 0 & a_1 & a_2& 0 \\
     0& -a_0 & 0 & 0& a_4 \\
      0 & 0 & 0 & a_0& -a_1 \\
 \end{array}\right) \quad {\rm and}\quad S_2=
 \left( \begin{array}{ccccccc}
 -a_7  \\
 a_4\\
-a_2  \\
  a_1 \\
   a_0 \\
     \end{array}\right),
 \]
 i.e.
 \[
 I_{E\cup \{p\},2}\cdot S_1=S_1\cdot S_2=0.
 \]
 The rows of the matrix $S_1$ have no constant syzygies, so there are no ideal properly contained in $I_{E\cup \{p\}}$ with the second order linear syzygy, $S_2$, among its quadrics.  

 Now, if the matrix above is symmetric, i.e. $a_1=a_3,a_2=a_6,a_5=a_7$, it has rank one along a Veronese surface $\Sigma$.  
Therefore, for each point $p\in \Sigma$,  there is an inclusion $I_\Sigma\subset I_{E\cup \{p\}}$.  The quadrics in $I_{E\cup \{p\}}$ in the non-symmetric case reduces to the quadrics in $I_\Sigma$.  If $p=V(a_0,a_1, a_2, a_4, a_7)\in \Sigma$, the above displayed second order syzygy remains a second order linear syzygy among the quadrics in $I_\Sigma$, and as in the non-symmetric case, no proper ideal contained in $I_\Sigma$ has this second order linear syzygy. 

\end{proof}

Assume now that $Z$ is a set of $10$ points that is apolar to a cubic fourfold $F$ of rank $10$, and that a subset $Z_0\subset Z$ of $9$ points lies in an elliptic normal sextic curve $E$.  By Lemma \ref{second order lin}, the ideal of $E\cup Z$, and hence also $I_Z$, has a second order syzygy that vanishes in the point $p=Z\setminus Z_0$ so the matrix $M_2$ in the resolution of $I_f$ has rank at most $20$ at $p$.

Furthermore, assume that the set $Z_0\subset E$ of $9$ points  in $\PP(V)$ is
  the base locus of a pencil of elliptic sextic curves $\{E_\lambda\}$ on a Veronese surface, and that the point $p$ is not contained in the secant variety of this Veronese surface.  Then, for a general curve $E_\lambda$ in the pencil, the second order linear syzygy for the ideal of $E_\lambda\cup Z$ determines the curve. Therefore there is a pencil of second order syzygies for $I_Z$ that vanishes at $p=Z\setminus Z_0$.  Hence, we conclude that  the matrix $M_2$ in the resolution of $I_f$ has rank at most $19$ at $p$.

Let $F$ be a cubic fourfold defined by a form $f$ of rank $10$ and consider the
incidence
\[
I_{VSP}=\{(p,[Z])|p\in Z\}\subset \PP(V)\times VSP(F,10).
\]
Then, by definition,  $V_{10}(F)\subset \PP(V)$ is the image of
$I_{VSP}$ under  the first projection $I_{VSP}\to  \PP(V)$.   

 \begin{corollary}\label{second order} Let $f$ be a cubic form of rank $10$ with no partial
 derivatives of rank $\leq 3$ and Betti numbers
  \[ \begin{array}{ccccccc}
 1 & - & - & - & - & -& - \\
 - & 15 & 35 & 21& - & -& - \\
 - & - & - & 21& 35 & 15& - \\
 - & - & - & - & - & -& 1
 \end{array}
 \]
 for the apolar ideal $I_f$.  Assume that there exist a set $Z$ of $10$ points apolar to $f$, and that a subset $Z_0\subset Z$ of $9$ points lie on an elliptic sextic curve $E_{Z_0}$, while the point $p=Z\setminus Z_0$ does not lie on the secant variety of any Veronese surface that contains $E_{Z_0}$.
Then the $(35\times 21)$-matrix $M_2$ of linear second order
syzygies has rank at most $20$ along $V_{10}(F)$. Furthermore, $M_2$
has rank at most $19$ at every point $p\in Z\subset \PP(V)$ such that the subscheme $Z_0$ 
is contained in a pencil of elliptic sextic curves on a Veronese surface.

 \end{corollary}
 \begin{proof}   The first condition on $Z_0$ and $Z$ is clearly an open condition in $VSP(F,10)$, so $M_2$  
  has rank at most $20$  along a Zariski open set of $V_{10}(F)$, hence everywhere along $V_{10}(F)$.
  Similarly the second condition  on $Z_0$ and $Z$ is open among sets of points $Z$ such that the subset $Z_0$ is contained in a pencil of 
elliptic sextic curves on a Veronese surface, so the second part of the Corollary follows.

\end{proof}

\begin{lemma}\label{leautiliser} (von Bothmer \cite{Bo})
Let $f$ be a cubic form whose apolar ideal $I_f$ has a minimal free resolution with Betti numbers
 \[ \begin{array}{ccccccc}
 1 & - & - & - & - & -& - \\
 - & 15 & 35 & 21& - & -& - \\
 - & - & - & 21& 35 & 15& - \\
 - & - & - & - & - & -& 1
 \end{array}.
 \]
Then the $(35\times 21)$-matrix $M_2$ of linear second order
syzygies has rank at most $20$ either on all of $\PP(V)$ or on a hypersurface
$Y_F$.   In the second case, $V_{10}(F)$ is equal to
$Y_F$ and  has degree $9$ if  $M_2$ has rank $20$ at a general point of $Y_F$.
\end{lemma}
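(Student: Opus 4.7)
The object of study is the morphism of sheaves
\[
M_2 : \mathcal{O}_{\PP(V)}(-4)^{21} \longrightarrow \mathcal{O}_{\PP(V)}(-3)^{35},
\]
a $35\times 21$ matrix of linear forms on $\PP(V)=\PP^5$. Its rank-drop locus $Y_F$ is by definition the scheme cut out by the $21\times 21$ minors of $M_2$. The lemma asserts that either $Y_F=\PP(V)$ or $Y_F$ is a hypersurface of degree $9$, and in the second case that $Y_F=V_{10}(F)$.

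The main leverage comes from the self-duality of the Gorenstein resolution of $I_f$: applying $\mathrm{Hom}(-,S(-9))$ interchanges the two halves of the complex and identifies $M_2$, up to transposition, with the linear component of the ``mirror'' differential $S(-6)^{35}\to S(-5)^{21}\oplus S(-4)^{21}$. Combined with the relation $d^2=0$ among the differentials (which gives an identity of the form $Q\cdot M_2^T + M_2\cdot R = 0$, where $Q,R$ are the quadratic blocks in the adjacent differentials), this forces the ideal of $21\times 21$ minors of $M_2$ to be either the unit ideal or principal of height $1$. So the first step is to show the dichotomy: $Y_F$ is either all of $\PP(V)$ or pure of codimension $1$.

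The second and main step is to compute the degree when $Y_F$ is a proper divisor. The naive Thom--Porteous estimate is useless here — the generic codimension of rank $\leq 20$ in a $35\times 21$ matrix is $(35-20)(21-20)=15$, so no Chern-class formula applied blindly to $M_2$ alone can give a divisor. One must therefore exploit the self-dual Gorenstein structure of the resolution to extract a specific small-degree generator of the Fitting ideal. I would proceed by writing out the Eagon--Northcott / Buchsbaum--Rim complex resolving $\mathrm{coker}(M_2)$, twisting by the appropriate line bundle suggested by the self-duality, and using the Hilbert series of the resolution (which is completely determined by the given Betti table) to read off the degree of the principal generator of $\mathrm{Fitt}_0(\mathrm{coker}(M_2))$ as $9$. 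Equivalently, one expects $Y_F$ to be the zero locus of a ``Pfaffian-like'' covariant of $f$ of degree $9$ built from the pairing between the linear and quadratic strands of the resolution; a concrete verification on a single explicit cubic $f$ with the prescribed Betti numbers, together with semicontinuity, would then pin down the generic degree at $9$.

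For the third step, identifying $Y_F$ with $V_{10}(F)$, the inclusion $V_{10}(F)\subseteq Y_F$ is already Corollary \ref{second order}. For the reverse, a point $p\in Y_F$ gives a nonzero $v\in\ker M_2(p)$, i.e.\ a second-order linear syzygy of $I_f$ which vanishes at $p$. The reversal of the construction in Lemma \ref{le2sep} then lifts $v$ to a length-$10$ apolar subscheme containing $p$: the syzygy at $p$ coming from the rank drop is, modulo the lower-order pieces of the resolution, precisely the syzygy produced by an elliptic sextic-type configuration of $9$ points completed by $p$, and the cactus-rank-$10$ hypothesis guarantees that such a configuration lies in $\PP(V)$. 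I expect Step 2 — pinning down the degree to exactly $9$ rather than a multiple of it, and ruling out embedded components in $Y_F$ — to be the principal obstacle, since it relies on squeezing the right invariant out of the Gorenstein-symmetric resolution rather than on any generic-matrix argument.
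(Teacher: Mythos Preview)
Your plan diverges from the paper in two places, and one of them is a real gap.

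\textbf{The degree computation.} You dismiss Thom--Porteous too quickly. It is true that the expected codimension for a $35\times 21$ matrix to drop rank is $15$, so no generic-matrix argument applies. But the paper does not treat $M_2$ as a generic matrix: it uses the whole linear strand. Dualize the strand
\[
\mathcal{O}(-4)^{21}\xrightarrow{\,M_2\,}\mathcal{O}(-3)^{35}\to\mathcal{O}(-2)^{15}\to\mathcal{O}
\]
and twist. Because the Betti table has no other entries in homological degrees $0,1,2$, the dual of the linear strand coincides with the dual of the full (sheafified, hence exact) resolution at those spots, and therefore
\[
0\to\mathcal{O}(-4)\to\mathcal{O}(-2)^{15}\to\mathcal{O}(-1)^{35}\xrightarrow{\,M_2^t\,}\mathcal{O}^{21}
\]
is a locally free resolution of $\mathrm{coker}(M_2^t)$. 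The support of this cokernel is exactly $Y_F$, and its class is $c_1(\mathrm{coker}\,M_2^t)=35-2\cdot 15+4=9$. No Eagon--Northcott, no Buchsbaum--Rim, no Pfaffian covariants, and your Step~1 dichotomy falls out automatically. Your proposed complexes would not even be exact here, since $M_2$ is far from generic.

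\textbf{The equality $V_{10}(F)=Y_F$.} The inclusion $V_{10}(F)\subseteq Y_F$ is indeed Corollary~\ref{second order}. For the reverse, your plan to ``reverse the construction of Lemma~\ref{le2sep}'' is unsupported: that lemma produces a second-order syzygy vanishing at $p$ from an elliptic sextic through nine of the ten points, but an arbitrary element of $\ker M_2(p)$ has no reason to arise this way, and there is no evident mechanism to manufacture an apolar length-$10$ scheme from such a syzygy. The paper avoids this entirely. Since $V_{10}(F)$ is a hypersurface for general $F$ and is contained in the degree-$9$ hypersurface $Y_F$, it suffices to exhibit one $F$ for which $Y_F$ is irreducible; then $V_{10}(F)$, being nonempty of the same dimension inside an irreducible $Y_F$, must equal it, and the equality propagates to all $F$ in the second case. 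Such an example is supplied in the proof of Proposition~\ref{lefiniteveronese} (cf.\ Remark~\ref{rema14aout}).
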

\begin{proof}
Consider the linear strand of the resolution of $I_f$ with Betti numbers
 \[ \begin{array}{ccccccc}
 1 & - & - & - & -  \\
 - & 15 & 35 & 21& -
 \end{array}
 \]
 evaluated at a general point.
The first map has kernel of dimension $14$.  Therefore the corank of the
third map $\varphi_{M_2}$ is at least $14$. If the linear strand is
exact at a general point, then the rank of the third map drops along
a hypersurface.   We compute the degree of this hypersurface by restricting the linear strand to a general line $L\subset \PP(V)$.
This restriction of the linear strand is a complex
\[
0\leftarrow {\mathcal O}_{L}\leftarrow 15{\mathcal O}_{L}(-2)
 \leftarrow 35{\mathcal O}_{L}(-3)\leftarrow 21{\mathcal O}_{L}(-4)\leftarrow 0
\]
that is exact, except at $35{\mathcal O}_{L}(-3)$.
The kernel of the first map is a vector bundle $E_1$ of rank 14 and first Chern class $c_1(E_1)=-30$ on $L$.  Therefore, the second map factors into a surjective map
$E_1\leftarrow 35{\mathcal O}_{L}(-3)$ with kernel a vector bundle $E_2$ of rank 21 with first Chern class $c_1(E_2)=35\cdot (-3)-(-30)=-75$ on $L$.
The third map of the linear strand, defined by the restriction of  $\varphi_{M_2}$ to $L$, factors through a vector bundle map $E_2 \leftarrow 21{\mathcal O}_{L}(-4)$
between two bundles of rank $21$.  The determinant of this bundle map, since it is assumed to be nonzero, defines a divisor whose class is the difference of the first Chern classes of the two bundles, i.e. of degree $-75+21\cdot (-4)=9$ on $L$.  So $\varphi_{M_2}$ either has rank at most $20$ on all of $\PP(V)$ or it has rank at most $20$ on a hypersurface of degree $9$.

For the last statement, we already proved in Corollary \ref{second
order} that the hypersurface $V_{10}(F)\subset \PP(V)$ is contained
in the determinental hypersurface $Y_F$ of points where $M_2$ has
rank at most $20$.  On the other hand, one can exhibit $F$ for which
$Y_F$ is irreducible (cf. Proposition \ref{exampleveronese}).
Hence for such an $F$, $V_{10}(F)$ must be equal to $Y_F$, which
implies the same result for any $F$.
\end{proof}
\begin{remark}{\rm  The general cubic fourfold $F$ in the divisor $D_{IR}$ has rank $10$, while $V_{10}(F)$ is a Pfaffian cubic hypersurface (cf. \cite[Lemma 3.9 and Proposition 3.15]{IR}).  In this case the $(35\times 21)$-matrix $M_2$ has rank $18$ at a general point of $V_{10}(F)$.
}
\end{remark}

\begin{lemma} \label{lesingVdixdet} Let  $f\in S^3V$ be a cubic form 
whose apolar ideal $I_f$ has a minimal free resolution with Betti numbers
 \[ \begin{array}{ccccccc}
 1 & - & - & - & - & -& - \\
 - & 15 & 35 & 21& - & -& - \\
 - & - & - & 21& 35 & 15& - \\
 - & - & - & - & - & -& 1
 \end{array}.
 \]
 If the $(35\times 21)$-matrix $M_2$ of linear second order
syzygies has rank $21$ at a general point and rank $20$ at some point,
  then $V_{10}(F)$ is singular along the set of points $[l]\in \PP(V)$ for
  which $f-l^3$ has rank $9$ and the matrix $M_2$ has rank at most $19$, in particular at the points $[l]$
for which $f-l^3$ is apolar to a pencil of elliptic
normal sextic curves on a Veronese surface.
\end{lemma}
\begin{proof}
Indeed, by Lemma \ref{leautiliser}, the $(35\times 21)$-matrix $M_2$ has rank at most $20$ along the determinantal hypersurface $V_{10}(F)=Y_F$.  Since it has rank $20$ at some point, it must have rank $20$ at a general point of $V_{10}(F)$, while $V_{10}(F)$  is singular where the rank is at most $19$.  The lemma therefore follows from Corollary \ref{second order}.
\end{proof}
\begin{remark} \label{rank19alongsurface} {\rm We have computed with {\it Macaulay2} \cite{MAC2}  for certain cubic forms $f$, that $V_{10}(F)$ is a
hypersurface of degree $9$ whose singular locus is a surface of
degree $140$ that coincides with the locus where $M_2$ has rank at
most $19$. Therefore we conjecture that this holds for a general
$f$.}\end{remark}

\begin{lemma}\label{twonines} Let $f$ be a cubic form of rank $9$ and assume that there is a $9$-tuple of points apolar to $f$ that is a divisor $D$ on an elliptic sextic curve and that $2D$ is not a cubic hypersurface divisor on the curve.  Then there are exactly two subschemes of length $9$ that are apolar to $f$.
\end{lemma}
\begin{proof} 
Let $D=\{p_1,...,p_9\}$ be a set of points on an elliptic sextic curve $E$ apolar to $f$.  Then the Gale transform of the
points $D$ are $9$ points $D'\subset\PP^2$.

The Gale transform  (cf. \cite{EP}) reembeds $E$ as a cubic
curve $E'$ through the points $D'$ in $\PP^2$, such that the lines in the plane intersect $E'$ in divisors $H_3$ equivalent to $D-H_6$, where $H_6$ is a hyperplane divisor on $E$ in $\PP^5$.
Since $2D$ is not equivalent to $3H_6$, the cubic divisor in the plane $3H_3=3D-3H_6=D+(2D-3H_6)$ is not equivalent to $D$.
Therefore $D'$ lies on a unique cubic curve in the plane, and likewise $E$ is the unique elliptic sextic curve in $\PP^5$ through $D$.
The curve $E$ is apolar to $f$, and we claim that any $9$-tuple of points
apolar to $f$ lies on this curve.

By Terracini's Lemma, (cf. \cite{Ter}, \cite{Zak}), the tangent space to the $8$-th secant variety
of the $3$-uple embedding $W_3\subset\Pn {55}$ of $\Pn 5$ at the point $[f]$ is
the span of the tangent spaces of any $9$  points in $W_3$ whose
span contains $[f]$.  The tangent space to the $8$-th secant
variety at $[f]$ is therefore defined by the linear space of cubic
hypersurfaces that  are singular at $p_1,...,p_9$.   The curve $E$
is contained in four Veronese surfaces, corresponding to the four
square roots of the hyperplane line bundle of degree $6$. The secant
varieties of these Veronese surfaces generate a pencil of cubic
hypersurfaces singular  along the elliptic curve.  Their
intersection is precisely the union of secant lines to $E$, so there
are no other cubics singular along $E$, and $E$ is the common
singular locus of this pencil. We will show that these hypersurfaces
are precisely the cubic hypersurfaces singular at $p_1,...,p_9$.
Since the divisor $2D$ is not linearly equivalent
to a cubic hypersurface divisor, a cubic hypersurface singular in
$D$ must contain the curve $E$. Furthermore, on any
smooth intersection $S$ of three quadrics containing the curve, the curve $E$ has
trivial normal bundle.  Therefore, the residual of a cubic
hypersurface section  of $S$ that contains $E$,  meets the curve in a
divisor equivalent to a cubic section.  Hence, a cubic that is
singular along $D$, must contain the doubling of the
curve in the three quadrics.  Varying the complete intersection
surface $S$, we may conclude that the cubic must be singular along the
curve.

Summing up we see that tangent space of the $8$-th secant variety of
$W_3$ at the point $[f]$ has codimension $2$ in $\Pn {55}$ and that any $9$-tuple
of points on $W_3$ whose span contains $[f]$ is contained in the reembedding $E"$ of $E$ in $W_3$.

The curve $E"$ in $W_3$ is an elliptic normal curve of degree $18$.
By \cite[Proposition 5.2]{CC}, and its proof, when $2D$ is not equivalent to $3H_6$ there is unique divisor $D''$ of degree $9$ on $E"$,  distinct from $D$, whose span in the $3$-uple embedding contains  $[f]$.  In fact, $D''$ is a divisor equivalent to $3H_6-D$.  Therefore  $f$ is apolar to exactly two subschemes of length $9$ supported on $E$.
\end{proof}

\begin{lemma}{\label{ellip}} Let $F$ be a fourfold defined by a cubic form $f$, that has no partials of rank $\leq 3$ and
whose apolar ideal $I_f$ has a minimal free resolution with Betti numbers
 \[ \begin{array}{ccccccc}
 1 & - & - & - & - & -& - \\
 - & 15 & 35 & 21& - & -& - \\
 - & - & - & 21& 35 & 15& - \\
 - & - & - & - & - & -& 1
 \end{array}.
 \]
 Assume that the $(35\times 21)$-matrix $M_2$ of linear second order
syzygies has rank $21$ at a general point, and that there is a $10$-tuple of points $Z\subset \PP(V)$ apolar to $f$ and a point $[l]\in Z$ at which $M_2$ has rank $20$, such that the $9$ points $Z_0=Z\setminus [l]$ form a divisor $D$ on an elliptic sextic curve $E_{Z_0}$.  Assume furthermore that the divisor $2D$ is not equivalent to a cubic hypersurface divisor on $E_{Z_0}$,  and that $[l]$ is not contained in the secant variety of any Veronese surface containing $E_{Z_0}$.

Then the projection $I_{VSP}\to V_{10}(F)$ is generically $2:1$.

\end{lemma}
\begin{proof}
Note that the cubic form $f$ has rank $10$ and that $f-cl^3$ has rank $9$ for some $c\in \CC$.  In fact, since $M_2$ has rank $20$ at $[l]$, the ideal $I_f$ has a unique second order linear syzygy vanishing at $[l]$.  By Lemma \ref{second order lin}, this syzygy determines uniquely the curve $E_{Z_0}$. In particular, there is a unique $c$ such that $f-cl^3$ has rank $9$.  By Lemma \ref{twonines}, there are exactly two points in the fiber of the projection
\[
I_{VSP}\to V_{10}(F)\qquad  ([l],[Z])\mapsto [l]
\]
over $[l]$.  Since the conditions on $Z$ are open, the lemma follows.
\end{proof}

\section{The divisor of cubic fourfolds apolar to a Veronese \label{sec3}}

In the first part of this section we show (Corollary \ref{lesingver}) that for a general cubic fourfold $F$
apolar to a Veronese surface $\Sigma$, i.e. in the set $D_{V-ap}$,
the variety $VSP(F,10)$ is singular along a $K3$ surface, and then (Corollary \ref{lesingvdix}) that the hypersurface $V_{10}(F)$ introduced and studied in the previous
section is singular along $\Sigma$.
Subsequently, we show (Corollary \ref{lefiniteveronese}) that
the general $F$ in $D_{V-ap}$ is apolar to finitely many Veronese
surfaces, by exhibiting an $F$ in $D_{V-ap}$ such that the singular
locus of $V_{10}(F)$ cannot contain a one-dimensional family of
Veronese surfaces.
Next, we extend results in Section \ref{3.2} to
show (Corollary \ref{codim1} and Propositions \ref{propapdivapolar} and \ref{newprop19aout}) that $D_{V-ap}$ is a divisor different from $D_{rk3}$ and
$D_{copl}$, and that the fourfold $VSP(F,10)$ does not meet the singular locus of the Hilbert scheme for a general $F$ in $D_{V-ap}$ (Corollary \ref{corleb}).
In the final part of the section we show (Proposition
\ref{propnonNL}) that $D_{V-ap}$ is not a Noether Lefschetz divisor
in the moduli space of smooth cubic fourfolds.

The Propositions  \ref{propapdivapolar} and \ref{propnonNL} and Corollaries \ref{lefiniteveronese} and \ref{corleb} are applied in Section \ref{secpropsing} to show that for a general $[f]\in D_{V-ap}$, the fourfold $VSP(F,10)$ is smooth outside a surface along which it has quadratic singularities.

By a direct calculation in an example we now prove:
\begin{proposition} \label{exampleveronese} Let $F$ be a general
cubic fourfold apolar to a Veronese surface $\Sigma$.

 (i)  $F$ has cactus rank
$10$. Hence  no length $9$ subscheme of $\PP(V)$ is apolar to $F$.

(ii) $F$ is nonsingular and the form $f$ defining $F$ has no partial derivatives of rank
$\leq 3$.

(iii) The minimal free resolution of the apolar ideal $I_f$ has Betti numbers
\[ \begin{array}{ccccccc}
 1 & - & - & - & - & -& - \\
 - & 15 & 35 & 21& - & -& - \\
 - & - & - & 21& 35 & 15& - \\
 - & - & - & - & - & -& 1
 \end{array},
\]
and the matrix $M_2$ of linear second order syzygies of $I_f$ has rank $20$ at a general point of $\Sigma$.

(iv) $Y_F=V_{10}(F)$ is an irreducible fourfold singular in codimension at least $2$.
\end{proposition}
\begin{proof}

We find with {\it Macaulay2} \cite {MAC2} a cubic form apolar to a Veronese surface $\Sigma$, and compute the
 resolution of its annihilator (apolar ideal).
Let $\Sigma$ be the Veronese surface defined by the $(2\times 2)$-minors of
\[
\begin{pmatrix}x_0&x_1&x_2\\x_1&x_3&x_4\\x_2&x_4&x_5
\end{pmatrix}
\]
So the ideal of $\Sigma$ is generated by
\[
\langle x_0x_3-x_1^2, x_0x_5-x_2^2, x_3x_5-x_4^2,x_0x_4-x_1x_2,x_1x_4-x_2x_3,x_1x_5-x_2x_4.\rangle
\]
By differentiation one may check that each of these quadratic forms annihilates the following cubic form:

\begin{align*}
f=&y_0^2y_1+y_1y_2^2-2y_1y_2y_3-y_2y_3^2-y_1^2y_4+2y_0y_2y_4-2y_0y_3y_4-2y_1y_3y_4\\
&+2y_0y_1y_5+y_4^2y_5+y_3y_5
       ^2).
\end{align*}

So $f$ is apolar to the Veronese surface $\Sigma$.
The apolar ideal of the cubic form $f$ has Betti numbers
\[ \begin{array}{ccccccc}
 1 & - & - & - & - & -& - \\
 - & 15 & 35 & 21& - & -& - \\
 - & - & - & 21& 35 & 15& - \\
 - & - & - & - & - & -& 1
 \end{array}.
\]
Its $35\times 21$-matrix $M_2$ of second order linear syzygies restricted to the plane 

\[
x_0= x_3=x_4-x_5=0,
       \]
has rank $20$ along a curve of degree $9$.  Reduced modulo $5$ the defining form for this curve is

           \begin{align*}
&x_1^9-2x_1^8x_2+2x_1^7x_2^2-x_1^6x_2^3+x_1^5x_2^4+x_1^4x_2^5+2x_1^3x_2^6-2x_1^2x_2^7+2x_2^9-2x_1^8x_4\\
&+x_1^7x_2x_4-2x_1^6x_2^2x_4+2x_1^5x_2^3x_4-x_1^4x_2^4x_4-x_1^3x_2^5x_4-x_1^2x_2^6x_4+x_1x_2^7x_4\\
      & -2x_2^8x_4+2x_1^7x_4^2+2x_1^6x_2x_4^2-2x_1^5x_2^2x_4^2-x_1^4x_2^3x_4^2-2x_1^2x_2^5x_4^2-x_1x_2^6x_4^2-x_1^6x_4^3\\
      & -2x_1^4x_2^2x_4^3-2x_1^3x_2^3x_4^3+2x_1x_2^5x_4^3-x_2^6x_4^3+x_1^5x_4^4-2x_1^4x_2x_4^4-2x_1^3x_2^2x_4^4\\
      & -x_1x_2^4x_4^4+2x_1^4x_4^5-x_1^3x_2x_4^5-2x_1x_2^3x_4^5+x_2^4x_4^5-2x_1^3x_4^6+2x_1^2x_2x_4^6-2x_2^3x_4^6\\
      & +2x_1^2x_4^7+2x_1x_2x_4^7+x_1x_4^8-2x_2x_4^8-x_4^9
            \end{align*}

It is nonsingular, which proves (iv). In particular the generic rank of the matrix
$M_2$ is $21$ for $f$. Therefore, by Lemma \ref{rank9}, the cactus rank of $f$ is $10$, which proves (i).

A direct computation shows that $F=V(f)$ is nonsingular, that $f$  has no partials of rank $3$,  and that the matrix $M_2$ for the apolar ideal of
$f$ has rank $20$ at the point $V(x_0,x_1,x_2,x_3,x_4)\in \Sigma$, hence at a general point on $ \Sigma$, which proves (ii) and (iii), respectively.
\end{proof}

Let $W$ be a vector space of rank $3$, and $V=S^2W$, and recall the linear map (cf. (\ref{letrivial}))
\begin{eqnarray}\label{lineartrivial}
s:S^6W\rightarrow S^3V\qquad \text{s.t.}\;\;s(a^6)=(a^2)^3
\end{eqnarray}

For $g\in
S^6W$, we consider the cubic form $f=s(g)\in S^3V$.  Let $C=V(g)\subset \PP(W^*)$ and $F=V(f)\subset \PP(V^*)$.  Formula
(\ref{lineartrivial}) shows that there is a natural embedding
\[
\phi: VSP(C,10)\to VSP(F,10).
\]

Indeed, if
$g=\sum_ia_i^6$, then $f=s(g)=\sum_i(a_i^2)^3$. For distinct $[a_i]\in
\PP(W)$, the morphism $\phi$ sends the length $10$ subscheme
$\{[a_i]|i=1,...,10\}$ to the length $10$ subscheme $\{[a_i^2]|i=1,...,10\}$ of $\PP(V)$. More
generally, $\phi$ associates to a length $10 $ apolar subscheme $Z$
of $g$ in $\PP(W)$ the length $10 $ apolar subscheme to $f$ in
$\PP(V)$ which is the image of $Z$ under the Veronese embedding.
\begin{remark}\label{Sg}{\rm
When $g$ is
general sextic ternary form and $C=V(g)$, then $g$ has rank $10$.  Mukai showed in \cite{Muk} that  $VSP(C,10)$ is a smooth $K3$ surface.
We shall often use the notation $S_g:=\phi(VSP(C,10))$.}
\end{remark}

\begin{lemma}\label{dim4} If $g$ is a general sextic ternary form and $f=s(g)$, then $VSP(F,10)$ is a fourfold and the projection $I_{VSP}\to V_{10}(F)$ is generically $2:1$, where $F=V(f)$.
\end{lemma}
\begin{proof} We may assume that $f=s(g)$ is a general cubic form apolar to a Veronese surface.   By Remark \ref{Sg}, the sextic form $g$ has rank $10$, and  by  Proposition \ref{exampleveronese}, $f=s(g)$ has cactus rank $10$, hence also rank $10$, and $V_{10}(F)$ is a fourfold.  We first claim that the projection $I_{VSP}\to V_{10}(F)$ is generically finite, and hence that $VSP(F,10)$ is also a fourfold.  For this let $\Sigma$ be the Veronese surface that is the image of the quadratic embedding of $\PP(W)$ in $\PP(V)=\PP(S^2W)$, and let $Z\subset \Sigma$ be a general $10$-tuple of points on $\Sigma$ that is apolar to $F$.  Note that $[Z]\in S_g=\phi(VSP(C,10))$.  We may assume that a subset  $Z_0\subset Z$  of $9$ points lie on a unique elliptic sextic curve $E_{Z_0}$ in $\Sigma$.  Then $p=Z\setminus Z_0$ lies on $V_{10}(F)$, and $I_\Sigma$ has a unique linear second order syzygy that vanishes at $p$. By Proposition \ref{exampleveronese} (iii), this syzygy is the unique second order syzygy for $I_f$ that vanishes at $p$, and, by Lemma \ref{second order lin}, every apolar subscheme $Z'$ to $F$ that contains $p$ must be contained in $\Sigma$.  Thus $[Z']\in\phi(VSP(C,10))$.  But, by Remark \ref{Sg},
 the variety $VSP(C,10)$ is a surface, so, since $p$ is a general point,  there are finitely many apolar schemes to $C$ of length $10$  that contain the point $p$.  Therefore the projection $I_{VSP}\to V_{10}(F)$ also has a finite fiber over $p\in V_{10}(F)$, and the projection is generically finite.

 By Proposition \ref{exampleveronese} (iv), the variety $V_{10}(F)$ is an integral hypersurface of degree $9$.
 Consider a one parameter family of cubic fourfolds $\{F_t\}_{t\in \CC}$ that contains $F$ and the total family
 $$I=\{(p,Z,t)|p\in Z, [Z]\in VSP(F_t,10)\}\subset \PP(V)\times {\rm Hilb}_{10}\PP(V)\times \CC.$$
 After possibly shrinking the parameterspace, we may assume that $I$ is irreducible and flat over an open subset $\Delta\subset\CC$.  For $t\in \Delta$ the fiber in $I$ over $t$ is $I_{VSP_t}$, and the variety  $V_{10}(F_t)\subset \PP(V)$ is the image of the projection of this fiber into the first factor. For general $t$ the variety $V_{10}(F_t)$ is an integral hypersurface of degree $9$ by Lemma \ref{leautiliser}, while the  projection $I_{VSP_t}\to V_{10}(F_t)$ is generically $2:1$ by Lemma \ref{ellip}.  Since $V_{10}(F)$ is  a hypersurface of degree $9$, the generically finite map  $I_{VSP}\to V_{10}(F)$ is also $2:1$.

\end{proof}

To show that $VSP(F,10)$ is singular along $S_g=\phi(VSP(C,10))$, we use the following general criterion for singularities of the variety of power sums of a hypersurface:

\begin{lemma}\label{singvsp} Assume that $k$ is the rank of a general hypersurface $F'$ of degree $d$ in $\PP(V^*)$.
Let $F\subset \PP(V^*)$ be a hypersurface of degree $d$ and rank $k$ and assume that $\dim VSP(F,k)=\dim VSP(F',k)$. Let $[Z]\in\text{Hilb}_k({\PP(V)})$ be an apolar subscheme to $F$ such that
$Z=\{l_1,\ldots,l_k\}$ consists of $k$ distinct points.
Then
$VSP(F,k)$ is singular at  $[Z]$,
 if
there is a hypersurface of degree $d$ in ${\PP(V)}$ which is
singular along $Z$.
\end{lemma}
\begin{proof}
Consider the universal family
 $$\mathcal{V}SP=\{([Z],[f])|[Z]\in VSP(F,k)\}\subset \text{Hilb}_k({\PP(V)})\times {\bf
 P}(S^dV).$$
  The
 fiber of the second
 projection over  a point  $[f]\in {\bf P}(S^dV)$ is $VSP(F,k)$, where $F=V(f)$. The
 fiber of the first projection over a point
 $[Z]\in \text{Hilb}_k(\PP(V))$  is a linear space, the linear
 span $\langle \rho_d(Z)\rangle$ of the image
 $\rho_d(Z)$ in ${\bf P}(S^dV)$ under the $d-$uple Veronese embedding
 $\rho_d$. Now, consider a point $([Z],[f])\in \mathcal{V}SP$ where $Z$ is a smooth subscheme apolar to $f$ and $f$ has rank $k$.  Then $Z$ belongs to
 the set of subschemes that impose independent
conditions to polynomials of degree $d$, which is open in the Hilbert scheme, and $\langle \rho_d(Z)\rangle$ is  a $\Pn {k-1}$.
 Since $\text{Hilb}_k(\PP(V))$ is smooth of  dimension $kn$ near $Z$, we conclude that
  $\mathcal{V}SP$ is smooth of
 dimension $kn+k-1$ at $([Z],[f])$.  Since $F$ has rank $k$,
the second projection  $\mathcal{V}SP\to {\bf P}(S^dV)$ is
dominant.  Furthermore, since the dimension of the fiber $VSP(F,k)$ of the second projection $\mathcal{V}SP\to {\bf P}(S^dV)$ is equal to the dimension of a general fiber,  the variety $VSP(F,k)$ is singular at a point $[Z]$ if
the rank of the second projection
at the point $([Z],[f])$ is less than ${\rm dim}( {\bf
P}({S}^dV))$.   If $Z=\{[l_1], ...,[l_k]\}$, then this rank is the
dimension of the span
 $T_Z=\langle [l_i^{d-1}y_j]|1\leq i\leq k, 0\leq
 j\leq n\rangle$ where $\langle y_0,\ldots,y_n\rangle=V$. In fact, from the expansion of
 $(l_i+y_j)^d$, we see that $l_i^{d-1}y_j$ defines a tangent direction at the point
 $[l_i^d]$, so $T_Z$ is the span of the tangent spaces to the $d$-uple
 embedding $\rho_d({\bf P}(V))$ at the points $[l_i^d]$ (this is a special case of Terracini's Lemma,
 cf. \cite{Ter}, \cite{Zak}). Hence $VSP(F,k)$ is singular at $[Z]$
 if these tangent spaces do not span ${\bf
P}({S}^dV)$.

But hyperplanes in ${\bf P}(S^dV)$ correspond to hypersurfaces of degree $d$ in $\PP(V)$, and a hyperplane contains the tangent space at $[l_i^d]$ if  and only if the corresponding hypersurface is singular at $[l_i]$.
Therefore $VSP(F,k)$ is singular at $[Z]$
  if  there is a hypersurface in $\PP(V)$ of degree $d$ singular in the points
 $[l_1],\ldots, [l_{k}]$.
\end{proof}
\begin{corollary}\label{lesingver} If $g$ is a general sextic ternary form and $f=s(g)$ (cf. \ref{lineartrivial}), then $VSP(F,10)$ is
singular along $S_g=\phi(VSP(C,10)$, where $F=V(f)$ and $C=V(g)$.
 \end{corollary}
 \begin{proof}  First, we may assume that $f=s(g)$ is a general cubic form apolar to a Veronese surface $\Sigma$.   By Remark \ref{Sg}, the sextic form $g$ has rank $10$, and  by  Proposition \ref{exampleveronese}, $f=s(g)$ has cactus rank $10$, hence also rank $10$, and  by Lemma \ref{dim4}, 
 $VSP(F,10)$ is a fourfold.

 Let $Z\subset \Sigma$ be a general $10$-tuple of points on $\Sigma$ that is apolar to $F$.  Note that $[Z]\in S_g=\phi(VSP(C,10))$.
 Now, since $g$ has rank $10$, the points in $Z$ impose independent
 conditions to $S^3V^*$.  According to Lemma
 \ref{singvsp}, the variety
 $VSP(F,10)$ is singular at $[Z]$ if  there exists a cubic fourfold
 singular along $Z$.
 This condition is satisfied in our situation since $Z$ is contained
 in the Veronese surface $\Sigma\subset \PP(V)$.  In fact, the Veronese surface is the singular locus of  the discriminant cubic hypersurface
 parameterizing singular conics in $\PP(W^*)$.
\end{proof}
The next lemma is used to prove that if $F$ is a general cubic fourfold apolar to a Veronese surface $\Sigma$, then the hypersurface $V_{10}(F)$ is singular along $\Sigma$
(Corollary \ref{lesingvdix}).
 \begin{lemma}{\label{ellipres}} Let $C$ be a plane curve defined by a general sextic form $g$, and let
 \[
 I_{VSP}=\{([l],[Z])|[l]\in Z\}\subset \PP^2\times VSP(C,10)\
 \]
  be the natural incidence variety.
 Then the projection onto the first factor is $2:1$.
\end{lemma}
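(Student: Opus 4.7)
The plan is to mirror the proof of Lemma \ref{ellip}, now with the $6$-uple Veronese $W_6\subset \PP(S^6W)\cong \PP^{27}$ in place of the $3$-uple Veronese $W_3\subset \PP^{55}$. I would first show that, for a general plane sextic $g'$ of rank $9$, every apolar $9$-tuple $\{[l_1],\ldots,[l_9]\}$ lies on the unique plane cubic curve $E$ through the $9$ points (smooth, hence elliptic, for a generic configuration). By Terracini's lemma, the tangent space to $\sigma_9(W_6)$ at $[g']$ is the span of the tangent spaces to $W_6$ at the points $[l_i^6]$; dually, it is the orthogonal of the space of plane sextics singular at $l_1,\ldots,l_9$. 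Now $9$ general points in $\PP^2$ impose $27$ independent conditions on the $28$-dimensional space $S^6W$, and they lie on a unique plane cubic $E$ (since $\dim S^3W=10$), so this space of singular sextics is $1$-dimensional and generated by $2E$. Hence $\sigma_9(W_6)$ is a hypersurface with tangent hyperplane $\langle 2E\rangle^\perp$ at $[g']$, and any other apolar $9$-tuple of $g'$ produces the same tangent hyperplane, forcing its points to lie in the singular locus of $2E$, i.e.\ on $E$.

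Second, the $6$-uple embedding of $E$ spans a $\PP^{17}\subset \PP^{27}$ and is an elliptic normal curve of degree $18$, since $h^0(E,\mathcal{O}_E(6))=18$ by Riemann-Roch. By \cite[Proposition 5.2]{CC}, a general point on the $8$-th secant variety of such a curve lies on the span of exactly two sets of $9$ points on the curve, so $g'$ is apolar to exactly $2$ subschemes of length $9$. Third, for a general $g$ of rank $10$ and a general $[l]\in\PP^2$, I would rescale the representative of $l$ so that $g=l^6+\sum_{i=1}^9 a_i l_i^6$ in some power sum presentation; then $g-l^6$ is a general rank-$9$ plane sextic, so by the previous step has exactly $2$ apolar $9$-tuples. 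Each yields an element of $VSP(C,10)$ containing $[l]$, giving $2$ preimages of $[l]$ under $I_{VSP}\to \PP^2$. The uniqueness of the coefficient of $l^6$ (ruling out further preimages coming from a different $\lambda$) is handled as in the cubic case: the intersection of the line $\{g-\lambda l^6\}$ with $\sigma_9(W_6)$ is concentrated at the Veronese point $[l^6]$, which has high multiplicity on $\sigma_9(W_6)$, leaving only one finite intersection point.

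The main technical obstacle is the first step — verifying that all apolar $9$-tuples of $g'$ lie on the same cubic $E$ — but this is actually cleaner in the plane sextic setting than in the cubic fourfold case of Lemma \ref{ellip}, since the uniqueness of $E$ follows immediately from the dimension count $\dim S^3W=10$, rather than from the more delicate analysis of elliptic normal sextic curves and divisor classes that was needed for cubic fourfolds.
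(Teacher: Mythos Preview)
Your proposal is correct and follows essentially the same route as the paper's proof, which simply says ``the argument of the proof of the previous lemma applies'' after observing that $Z\setminus\{p\}$ lies on a plane cubic. You have spelled out explicitly what that argument becomes in the plane-sextic setting---in particular, that the space of sextics singular at nine general points is one-dimensional, spanned by $2E$, so the Terracini/singular-locus step is immediate here rather than requiring the divisor-class analysis on elliptic normal sextics used in Lemma~\ref{ellip}. The extra care you take with the uniqueness of the coefficient $\lambda$ in $g-\lambda l^6$ goes beyond what the paper makes explicit (it is silent on this point in both lemmas), though your treatment of it is still heuristic.
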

\begin{proof} We may assume that $g$ has rank $10$.  Let $p=[l]\in Z\subset \PP^2$ be a point
in a general apolar subscheme of length $10$.  Then $Z_0=Z-p$ has length nine and is contained in a unique smooth cubic curve $E_{Z_0}$. In fact, in the pencil $g-\lambda l^6$, there is a unique form $g_1$ of rank $9$:  The $(10\times 10)$-catalecticant matrix of $\mu g-\lambda l^6$ has nonvanishing determinant with a zero of multiplicity $9$ at $\mu=0$, and hence one more zero.  The corank of the catalecticant matrix is the rank of the space of cubic forms apolar to $\mu g-\lambda l^6$, so the simple zero correspond to a unique sextic form $g_1$ in the pencil $g-\lambda l^6$ that is apolar to a cubic curve, i.e. apolar to the scheme $Z_0$ and the cubic curve $E_{Z_0}$.
By genericity, we may assume that  $2Z_0$ is not equivalent to $6H_L$ as divisors on $E_{Z_0}$, where $H_L$ is a divisor defined by a line in  $\PP^2$.
We apply now Lemma \ref{twonines} to the cubic form $s(g_1)$, and conclude that $p$ is contained in exactly two subschemes of length $10$ that are apolar to $g_1$.
\end{proof}

\begin{corollary}\label{lesingvdix}  Let $F$ be a cubic fourfold of cactus rank $10$ that is apolar to a Veronese surface $\Sigma$.
If the matrix $M_2$ of linear second order syzygies of the apolar ideal $I_f$ has rank $20$ at a general point of $V_{10}(F)$, then this hypersurface is singular along $\Sigma$.
\end{corollary}
\begin{proof}
 Let $F=V(f)$ and $f=s(g)$, then, by Corollary \ref{lesingver},
the variety $VSP(F,10)$ is  singular along the $K3$ surface
$S_g=\phi(VSP(C,10))$, where $C=V(g)$, and $V_{10}(F)\subset \PP(V)$ is a hypersurface of degree $9$, by Lemma \ref{leautiliser}.
Now, assume that $\{g_t\}_{t\in \CC}$ is a general one parameter family of ternary sextic forms such that $g=g_0$.  Let $g_t$ be a general member of the family.
Then, by Proposition \ref{exampleveronese} and Remark \ref{Sg}, the sextic form $g_t$ and the cubic form $f_t=s(g_t)$ both have rank $10$.   Any length $9$ subscheme of a general  apolar scheme of length $10$ of $g_t$ is contained in an elliptic normal sextic curve $E$ on $\Sigma$, and as a divisor on $E$ satisfies the condition of Lemma \ref{twonines}.
The projection $I_{VSP_t}\to VSP(F_t,10)$ and its restriction over $S_{g_t}$ are both finite and of
degree $10$.

Consider the other projection $I_{VSP_t}\to V_{10}(F_t)$. By Lemma \ref{dim4}, it is generically $2:1$ and  $V_{10}(F_t)$ is an irreducible hypersurface of degree $9$.
 On the other hand, by Proposition \ref{exampleveronese} {(iii)} the Veronese surface $\Sigma$ is contained in $V_{10}(F_t)$.  Let $p\in \Sigma$ be a general point.  As in the proof of Corollary \ref{lesingver}, if $(p,[Z])\in I_{VSP_t}$, then $Z\subset \Sigma$.  Therefore we may conclude, by  Lemma 
\ref{ellipres}, that the projection $I_{VSP_t}\to V_{10}(F_t)$ is  $2:1$ over $p$, and hence generically over $\Sigma$.

 An analytic neighborhood in
$VSP(F_t,10)$ of a general point in $S_{g_t}$ is therefore isomorphic to a
suitable neighborhood in $V_{10}(F_t)$ of any of the corresponding
points in $\Sigma$. Therefore $V_{10}(F_t)$ is singular along $\Sigma$
if and only if $VSP(F_t,10)$ is singular along $S_{g_t}$.

Thus, for an open neighborhood $0\in\Delta\subset \CC$, there is a family $\{V_{10}(F_t)|t\in \Delta\}$ of hypersurfaces of degree $9$  whose general member is singular along $\Sigma$.
To see that $V_{10}(F)$ is singular along $\Sigma$, we study this family.

Let
 $$I=\overline{\{(p,Z,t)|p\in Z, [Z]\in VSP(F_t,10)\}}\subset \PP(V)\times {\rm Hilb}_{10}\PP(V)\times \Delta,$$
 be the closure of the natural incidence, and let ${\mathcal V}SP_\Delta(10)\subset \PP(V)\times \Delta$ be the image of the projection $I\to \PP(V)\times \Delta$.
 Then $I$ and ${\mathcal V}SP_\Delta(10)$ are fivefolds, and thus the fibers of ${\mathcal V}SP_\Delta(10)\to \Delta$ are all fourfolds.  The fiber at $t\in\Delta$ therefore contains the fourfold $V_{10}(F_t)$ as a component.  Since  $V_{10}(F_t)$ is singular along $\Sigma$ for a general $t$, the same holds for  $t=0$ and the lemma follows.
\end{proof}
\begin{remark}
{\rm In computations we have found forms $f$ apolar to a Veronese
surface $\Sigma$, such that $V_{10}(F)$ is singular along the union
of $\Sigma$ and a surface of degree $140$, the locus of points where
the matrix $M_2$ of second order linear syzygies has rank at most
$19$. As noted in Proposition \ref{exampleveronese} (iii), the matrix $M_2$ has rank $20$ generically on $\Sigma$.}
\end{remark}

\begin{corollary} \label{lefiniteveronese} Let $F$ be a general
cubic fourfold apolar to a Veronese surface.  Then  $F$  is  apolar to finitely many Veronese surfaces.
\end{corollary}
\begin{proof}
The union of a $1-$dimensional family of Veronese surfaces is a threefold.
 So, if $f$ is apolar to a $1-$dimensional family of Veronese surfaces, then, by Lemma \ref{leautiliser} and Corollary  \ref{lesingvdix},
 the degree $9$ determinantal hypersurface $V_{10}(f)$ would
 be singular along a threefold, contradicting Proposition \ref{exampleveronese}. 
 \end{proof}

\begin{corollary}  \label{codim1} The set $D_{V-ap}$ of cubic forms that are apolar to some Veronese surface is an irreducible hypersurface
in $\PP(S^3V)$.
\end{corollary}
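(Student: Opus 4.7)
The plan is to exhibit $D_{V-ap}$ as the image of a natural incidence correspondence and to deduce that it is a hypersurface by a dimension count combined with Proposition \ref{lefiniteveronese}(ii).

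First I would parametrize the Veronese surfaces in $\PP(V)=\PP^5$: they form a single $PGL(6)$-orbit, hence an irreducible variety $\mathcal{V}$ of dimension $\dim PGL(6)-\dim PGL(3)=35-8=27$. Given any such $\Sigma\subset \PP(V)$, the cubics apolar to $\Sigma$ are by definition the annihilator of $I_\Sigma(3)\subset S^3V^*$. Since $\Sigma\cong \PP^2$ is embedded by $|\mathcal{O}_{\PP^2}(2)|$, the restriction map identifies $H^0(\Sigma,\mathcal{O}_\Sigma(3))$ with $S^6W$, so $\dim I_\Sigma(3)=56-28=28$ and the cubics apolar to $\Sigma$ form a linear subspace of $S^3V$ of dimension $28$, i.e.\ a $\PP^{27}$ in $\PP(S^3V)$.

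Next I would form the incidence variety
\[
\mathcal{I}=\{(\Sigma,[f])\in \mathcal{V}\times \PP(S^3V)\mid f\text{ apolar to }\Sigma\},
\]
which is a $\PP^{27}$-bundle over the irreducible base $\mathcal{V}$; in particular $\mathcal{I}$ is irreducible of dimension $27+27=54$. Since $\mathcal{V}$ is projective the second projection $\mathrm{pr}_2\colon \mathcal{I}\to \PP(S^3V)$ is proper, and its image is exactly the Zariski closed set $D_{V-ap}$ by Lemma \ref{leautretrivial}.

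Finally I would invoke Proposition \ref{lefiniteveronese}(ii): a general cubic apolar to a Veronese surface is apolar to only finitely many Veronese surfaces. Equivalently, $\mathrm{pr}_2$ has generically finite (non-empty) fibers over $D_{V-ap}$. Combined with irreducibility of $\mathcal{I}$, this yields $\dim D_{V-ap}=\dim \mathcal{I}=54$, so $D_{V-ap}$ is an irreducible hypersurface in $\PP(S^3V)=\PP^{55}$. The only genuinely nontrivial input is the finiteness of fibers, which is already supplied by Proposition \ref{lefiniteveronese}(ii); given that, the corollary is just bookkeeping of dimensions.
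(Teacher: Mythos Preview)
Your argument is essentially the same as the paper's: a dimension count showing the incidence locus has dimension $54$, combined with Proposition~\ref{lefiniteveronese}(ii) to ensure the projection to $\PP(S^3V)$ is generically finite. The paper phrases this via the rational map $s_{mod}:S^6W//GL(W)\dashrightarrow S^3V//GL(V)$ between moduli (counting $19$ versus $20$ parameters), while you work directly in $\PP(S^3V)$ with an incidence correspondence, but the content is identical.

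One small slip: the orbit $\mathcal{V}=PGL(6)/PGL(3)$ is \emph{not} projective, since $PGL(3)$ (embedded via the symmetric square) is reductive but not parabolic in $PGL(6)$; Veronese surfaces do degenerate in the Hilbert scheme. So your appeal to properness of $\mathrm{pr}_2$ needs a tweak---either replace $\mathcal{V}$ by its closure in the Hilbert scheme (the fiber dimension can only go up on the boundary, so $\dim\overline{\mathcal{I}}=54$ still), or simply observe that the image is constructible of dimension $54$ and take its Zariski closure, which is how $D_{V-ap}$ is understood in the paper anyway. This does not affect the substance of your argument.
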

\begin{proof}  The map $g\mapsto f=s(g)$ induces
 a rational map
 $$s_{mod}:S^6W//Gl(W)\dashrightarrow S^3V//Gl(V).$$
The image of $s_{mod}$ is the locus of cubic fourfolds apolar
 to a Veronese surface, so $D_{V-ap}$ is irreducible.  That it is a divisor,  follows from a dimension count:  Plane sextics have $28-9=19$ parameters, while cubic fourfolds have $56-36=20$ parameters, so
  it suffices to
 show that $s_{mod}$ has generically finite fiber.  The fiber
 of $s_{mod}$ over a point parameterizing a cubic fourfold $F$ may be identified with the set of Veronese surfaces
 which are apolar to $F$. The result thus follows from
 Corollary \ref{lefiniteveronese}.
\end{proof}

\begin{proposition}\label{propapdivapolar} A general cubic fourfold $F$ which is apolar to a Veronese
surface satisfies the conclusion of Proposition \ref{allapolar},
namely, any element $[Z]\in VSP(F,10)$ corresponds to a length $10$
 subscheme $Z$  which imposes independent conditions on cubics and is apolar to $F$.
\end{proposition}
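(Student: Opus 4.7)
My strategy is to treat $D_{V-ap}$ as an irreducible divisor in $\PP(S^3V)_{reg}$ (Corollary \ref{codim1}) and to compare it with the exceptional locus $B \subset \PP(S^3V)_{reg}$ of smooth cubic fourfolds on which the conclusion of Proposition \ref{allapolar} fails; showing that $B$ does not contain $D_{V-ap}$ will give the claim for a general point of $D_{V-ap}$.

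First I would invoke the second part of Proposition \ref{allapolar}, which asserts that every codimension-one component of $B$ equals either $D_{rk3}$ or $D_{copl}$. Thus one may write $B \subseteq D_{rk3} \cup D_{copl} \cup B'$ with $\codim B' \geq 2$ in $\PP^{55}_{reg}$. I would then combine two earlier results: Lemma \ref{lenoparderrang3} yields $D_{V-ap} \not\subseteq D_{rk3}$, since a general cubic apolar to a Veronese has no partial derivative of rank $\leq 3$; and Proposition \ref{newprop19aout} gives $D_{V-ap} \neq D_{copl}$. Since $D_{V-ap}$, $D_{rk3}$ and $D_{copl}$ are irreducible divisors, these noncontainments force $D_{V-ap} \cap D_{rk3}$ and $D_{V-ap} \cap D_{copl}$ to be proper closed subsets of $D_{V-ap}$. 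Likewise $B' \cap D_{V-ap}$ is a proper closed subset of $D_{V-ap}$, for otherwise the irreducible divisor $D_{V-ap}$ would be an irreducible component of $B'$, contradicting $\codim B' \geq 2$. Taking the union, $B \cap D_{V-ap} \subsetneq D_{V-ap}$, so the generic $[F] \in D_{V-ap}$ lies outside $B$.

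The main subtlety is simply the legitimacy of the decomposition $B \subseteq D_{rk3} \cup D_{copl} \cup B'$. This is precisely the content of the second half of Proposition \ref{allapolar}, and relies on the fact that apolarity of a length $10$ scheme to $F$ is a closed condition on the open subset $U \subset {\rm Hilb}_{10}(\PP^5)$ of schemes imposing independent conditions on cubics, so that the failure of the conclusion cuts out a genuine closed algebraic subset of $\PP^{55}_{reg}$ whose codimension-one components are constrained. I do not expect to need any information about the singular structure of $VSP(F,10)$ for $F \in D_{V-ap}$: the argument is a pure dimension count built on the two noncontainment statements $D_{V-ap} \not\subset D_{rk3}$ and $D_{V-ap} \not\subset D_{copl}$.
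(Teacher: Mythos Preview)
Your proposal is correct and follows essentially the same route as the paper: invoke the second clause of Proposition~\ref{allapolar} to confine the divisorial part of the bad locus to $D_{rk3}\cup D_{copl}$, then use Lemma~\ref{lenoparderrang3} and Proposition~\ref{newprop19aout} together with the irreducibility of $D_{V-ap}$ (Corollary~\ref{codim1}) to conclude that a general point of $D_{V-ap}$ avoids this locus. You have simply made explicit the handling of the higher-codimension remainder $B'$, which the paper leaves implicit.
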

\begin{proof} By Proposition \ref{allapolar}, the divisorial part of the
set of cubic fourfolds not satisfying this conclusion is contained
in the union of  the irreducible divisors $D_{rk3}$ and $D_{copl}$
introduced in Section \ref{sec1}. As we know that the set of cubics
apolar to a Veronese surface is an irreducible divisor which is
different from $D_{rk3}$ by Proposition \ref{exampleveronese} (ii), the result
follows from the following Proposition \ref{newprop19aout}.
\end{proof}

\begin{corollary} \label{corleb} For a general cubic fourfold $F$ which is apolar to a Veronese surface,
$VSP(F,10)$ does not meet $\text{Sing}(\text{Hilb}_{10}(\PP(V))$.
\end{corollary}
\begin{proof} This follows from
Proposition \ref{exampleveronese}(ii) which guarantees that the form $f$ that defines $F$ has no
partial derivative of rank
 $\leq3$, Proposition \ref{propapdivapolar}
 and Proposition \ref{corlea}.
\end{proof}

\begin{proposition}\label{newprop19aout} The divisors
$D_{copl}$ and $D_{V-ap}$ are distinct.
\end{proposition}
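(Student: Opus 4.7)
Both $D_{V-ap}$ and $D_{copl}$ are irreducible divisors in $\PP^{55}$ by Corollary~\ref{codim1} and Lemma~\ref{lediv218aout}, so to show they differ it suffices to exhibit one cubic form belonging to $D_{V-ap}$ but not to $D_{copl}$. The strategy is to compare, for a suitably chosen $f$, the dimension of the \emph{$4$-coplanar locus} in $VSP(F, 10)$, namely the subvariety parameterizing length-$10$ apolar subschemes of $f$ whose support contains four coplanar points. If this dimension is strictly less than $2$ for $f$, then $f \notin D_{copl}$: indeed, by Lemma~\ref{lediv218aout} the $4$-coplanar locus has dimension $\geq 2$ on the dense open subset of $D_{copl}$ where there are $10$ distinct forms with $4$ coplanar, and upper semicontinuity of fiber dimension in the universal family $\{(f, [Z]) : Z \in VSP(F, 10), Z \text{ has four coplanar points}\} \to \PP^{55}$ propagates this lower bound to all of $D_{copl}$.

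The candidate is $f = f(1,-1,1,-1,1)$, the explicit cubic in $D_{V-ap}$ constructed in the proof of Proposition~\ref{lefiniteveronese}, which has cactus rank exactly $10$. By Theorem~\ref{propsingintro}, its $VSP(F, 10)$ decomposes into the Mukai K$3$ surface $VSP(C, 10)$ (parameterizing subschemes on the Veronese $\Sigma$) together with its smooth $4$-dimensional complement parameterizing the off-Veronese apolar subschemes. On the K$3$ part, coplanarity in $\PP(V)$ of four points $[a_1^2], \ldots, [a_4^2] \in \Sigma$ is equivalent to linear dependence of the rank-one conics $a_i^2 \in S^2W$, which is a codimension-$1$ condition on $4$-tuples of lines in $\PP(W)$; hence the $4$-coplanar locus on the K$3$ has dimension at most $1$. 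On the $4$-dimensional smooth complement, the $4$-coplanar subvariety of $\text{Hilb}_{10}(\PP^5)$ has codimension $3$, so the expected dimension of its intersection with the complement is at most $1$.

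Combining these bounds gives dimension $\leq 1$ for the total $4$-coplanar locus in $VSP(F, 10)$, which contradicts the lower bound of $\geq 2$ required by membership in $D_{copl}$, and thereby establishes $f(1,-1,1,-1,1) \notin D_{copl}$, and hence $D_{V-ap} \neq D_{copl}$.

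The main obstacle is justifying the ``expected dimension'' bound on the off-Veronese smooth part of $VSP(F, 10)$: a priori this $4$-fold could meet the codimension-$3$ $4$-coplanar locus in $\text{Hilb}_{10}(\PP^5)$ non-transversely, producing an intersection of dimension greater than $1$. I would resolve this rigorously by direct Macaulay2 computation on the explicit $f(1,-1,1,-1,1)$, whose apolar ideal syzygies and determinantal hypersurface $V_{10}(F)$ are already explicitly available from the proof of Proposition~\ref{lefiniteveronese}: using this data one can enumerate the length-$10$ apolar subschemes with four coplanar points directly (via the locus in $V_{10}(F)$ of configurations containing a coplanar $4$-subscheme) and verify the dimension bound.
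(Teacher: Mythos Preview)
Your proposal has a genuine circularity: you invoke Theorem~\ref{propsingintro} to decompose $VSP(F,10)$ for $f=f(1,-1,1,-1,1)$ into the K3 surface $S_g$ and a smooth $4$-dimensional complement, but the proof of that theorem (the smoothness away from $S_g$, established in Section~\ref{secpropsing} via Lemma~\ref{leencore}) relies on Proposition~\ref{propapdivapolar}, whose proof in turn invokes the very Proposition~\ref{newprop19aout} you are trying to prove. So the structural decomposition you need is not available at this point in the logical order. Even ignoring this, Theorem~\ref{propsingintro} is stated for a \emph{general} sextic $g$, and you give no reason why the specific $g$ underlying $f(1,-1,1,-1,1)$ is general in that sense. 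A separate issue is that ``expected dimension $\le 1$'' on the off-Veronese part is not an argument: for a codimension-$3$ locus meeting a $4$-fold in $\text{Hilb}_{10}(\PP^5)$, expected dimension $1$ is a \emph{lower} bound on each nonempty component, not the upper bound you require; you acknowledge this, but the promised Macaulay2 verification is not carried out and would be the entire content of the proof. (A minor point: four points $[a_i^2]$ on the Veronese are coplanar in $\PP(S^2W)$ exactly when the $[a_i]$ are collinear in $\PP(W)$, which is codimension~$2$, not~$1$; this only helps your desired inequality, though.)

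The paper's argument avoids all of this by intersecting both divisors with the Iliev--Ranestad divisor $D_{IR}$. Using the quartic-scroll characterization of $D_{IR}$ (Lemma~\ref{DIR}) together with the Gale transform, one shows that any cubic in $D_{V-ap}\cap D_{IR}$ has cactus rank at most~$9$; one then exhibits an explicit cubic in $D_{copl}\cap D_{IR}$ of cactus rank~$10$, verified via the syzygy criterion of Lemma~\ref{rank9}. This route is logically prior to, and independent of, Theorem~\ref{propsingintro}.
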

\begin{proof}
 We shall distinguish $D_{copl}$ and $D_{V-ap}$ by proving that  their intersections
$$D_{V-ap}\cap D_{IR}\quad {\rm and} \quad D_{copl}\cap D_{IR}$$ with $D_{IR}$ are
distinct.

Recall from Section \ref{sec1} that $D_{IR}$ denotes the set of cubic fourfolds $F_{IR}(S)$ associated to a $K3$
 surface section $S=\PP_S^8\cap G(2,6)\subset \PP^{14}$.   The dual space $\PP_S^5:=(\PP_S^8)^{\bot}\subset \check{\PP^{14}}$ intersects the Pfaffian cubic hypersurface, the secant variety of $\check{G}(2,6)\subset \check{\PP^{14}}$, in a Pfaffian cubic fourfold $F_{BD}(S)$.  Furthermore, by \cite[Lemma 3.9 and Proposition 3.15]{IR}, there is an identification
 $V_{10}(F_{IR}(S))=F_{BD}(S)$.

\begin{lemma} \label{BDS} Let $F_{BD}(S)\subset \PP_S^5$ be a Pfaffian cubic fourfold  with no rank $2$ points, i.e. $ \PP_S^5\cap \check G(2,6)=\emptyset$, and let  $S=\PP_S^8\cap G(2,6)$ be the corresponding linear section of $G(2,6)$.   Then $F_{IR}(S)$ has cactus rank $10$, and $S$ is birational to a component of the Hilbert scheme of rational quartic surface scrolls in $F_{BD}(S)$ that are apolar to $F_{IR}(S)$.
\end{lemma}
 \begin{proof}  If $F_{BD}(S)$ has no rank $2$ points, then $\PP_S^8=(\PP_S^5)^{\bot}$ defines the apolar ideal $I_f$ of a cubic fourfold $F_{IR}(S)=V(f)$, and this fourfold has cactus rank $10$, (cf. \cite[ 3.5 and Lemma 3.6]{IR}).  By \cite[Lemma 2.9]{IR}, each secant line to $S$ defines a pair of rational quartic surface scroll in $F_{BD}(S)$ that intersect along scheme of length $10$ apolar to  $F_{IR}$.  The two scrolls correspond to the points of intersection on the variety $S$.
  \end{proof}

If a cubic fourfold $F$ of cactus rank $10$ is apolar to a Veronese surface, then, by Corollary \ref{lesingvdix},  $V_{10}(F)$ must contain this Veronese surface.
 So the proposition follows by finding a cubic fourfold $F=F_{IR}(S)\in D_{copl}\cap D_{IR}$, such that the Pfaffian cubic $F_{BD}(S)$ contains no Veronese surface.

We first consider Pfaffian cubic fourfolds that contain a plane.
  For a smooth cubic fourfold $F$, let $A(F)=H^4(F,\ZZ)\cap H^{2,2}(F)$, the lattice of integral middle Hodge classes.
\begin{lemma}\label{A(F)} If $F$ is a general smooth Pfaffian cubic fourfold that contains a plane $P$  intersecting a rational quartic surface $S_4$ in $F$ along a conic section, then $A(F)$ does not contain the class of a Veronese surface.  Furthermore the Pfaffian cubic fourfolds that contain such a plane form a divisor in the variety  of Pfaffian cubic fourfolds.
\end{lemma}
\begin{proof}  We may assume that $A(F)$ has rank $3$, generated by the classes of $h^2, [S_4]$ and $[P]$ (cf. \cite[Example 3.1 and Theorem 3.] {BR}).   The intersection matrix is
\[
\begin{matrix}
          &          h^2 &     [S_4]           &      [P]           \\
                h^2 & 3 &     4            &    1       \\
            [S_4] &    4 & 10 &     0           \\
            [P] &    1 &     0            &  3
        \end{matrix}
        \]
       The class of a  Veronese surface $\Sigma$ in $F$ would have intersections
       \[
       h^2\cdot [\Sigma]=4, [\Sigma]^2=12
       \]
    and $ -1\leq [\Sigma]\cdot P \leq 3$, since $\Sigma$ has degree $4$ and $[\Sigma]\cdot P=-1$ when $\Sigma\cap P$ is a conic.   If $\Sigma\subset F$, then  $[\Sigma]=aH^2+b[S_4]+c[P]$ with integral $a,b,c$.
    Computing intersection numbers we get
    $$3[\Sigma]^2  - ([\Sigma]\cdot [h^2])^2=14b^2+8c^2-8bc=20.$$
    Since $4$ divides the right hand side,  $b$ in the left hand side must be even, which again means that $8$ must divide the right hand side, a contradiction.
    For the last statement, see  \cite[Section 2]{ABBVA}. \end{proof}

  We now exhibit a cubic form $f$ in $D_{copl}$  that is apolar to a rational quartic surface scroll and has both rank and cactus rank
  $10$. By Lemma \ref{DIR}, it belongs to $D_{copl}\cap D_{IR}$.
 
The cubic form

\begin{align*}
f&= -7x_0^3+9x_0^2x_1-12x_0x_1^2+x_1^3-12x_0^2x_2+6x_0x_1x_2+3x_0^2x_3-3x_1^2x_3-6x_0x_2x_3\\&-6x_1x_2x_3+3x_0x_3^2
-3x_2x_3^2+x_3^3-6x_0x_1x_4-3x_1^2x_4-6x_0x_2x_4-6x_1x_3x_4-3x_0x_4^2\\&-3x_0^2x_5-6x_0x_1x_5+6x_2^2x_5-6x_0x_3x_5
+6x_2x_4x_5+3x_1x_5^2-x_5^3
\end{align*}

is apolar to the two quartic surface scrolls $S_4$ and $S_4'$ defined by the $2$-minors of
  \[
   \begin{pmatrix}
     x_0 &x_1&x_3&x_4\\
     x_1&x_2&x_4&x_5
\end{pmatrix}\; {\rm and}\;
\begin{pmatrix}
     x_3 &x_4&x_0+x_1+x_5&x_1-x_2+x_4\\
     x_4&x_5&x_1-x_3+x_4&x_0+x_2-x_3
\end{pmatrix}
\]
respectively,  so $f$ belongs to $D_{IR}$.
The intersection $S_4\cap S_4'$ of the two scrolls is the union
of the six points $Z_6$ defined by the $2$-minors of
    \[
    \begin{pmatrix}
     x_0 &x_1&x_3&x_4&x_0+x_1+x_5&x_1-x_2+x_4\\
     x_1&x_2&x_4&x_5&x_1-x_3+x_4&x_0+x_2-x_3
\end{pmatrix}
\]
and the four points
   \[
   V(x_0x_2-x_1^2,x_0^2+x_0x_1+2x_1x_2), x_3,x_4,x_5)
   \]
   in the plane $V(x_3,x_4,x_5)$,   so $f$ belongs also  to $D_{copl}$ and has rank at most $10$.
   The resolution of the  apolar ideal $I_f$ has Betti numbers
       \[ \begin{array}{ccccccc}
 1 & - & - & - & - & -& - \\
 - & 15 & 35 & 21& - & -& - \\
 - & - & - & 21& 35 & 15& - \\
 - & - & - & - & - & -& 1
 \end{array}.
 \]
and the matrix ${35\times 21}$-matrix $M_2$ of second order linear
syzygies of $I_f$ 
 has no
syzygies.  So we conclude that $f$ has cactus rank $10$ by Lemma
\ref{rank9} and hence also rank $10$.
Let $F=V(f)$.  Then $F=F_{IR}(S)$ for some $K3$ surface $S$, and $F_{BD}(S)$ is the corresponding Pfaffian cubic that contains the two quartic scrolls $S_4$ and $S_4'$.  Since each scroll intersects the plane $P=V(x_3,x_4,x_5)$ in a conic section, the plane $P$ is contained in $F_{BD}(S)$ and $A(F_{BD}(S))$ contains the rank three lattice generated by $h^2, [S_4]$ and $[P]$ with intersection matrix as in the proof of Lemma \ref{A(F)}.   By Lemma \ref{A(F)}, the Pfaffian cubic fourfolds $F$ that contain a plane that intersect a quartic scroll in a conic form a family of codimension one in the divisor of Pfaffian cubic fourfolds.
But  $D_{IR}\cap D_{copl}$ is a divisor in $D_{IR}$,  so the corresponding set of Pfaffian cubics also has codimension one in the divisor of Pfaffian cubic fourfolds.  Therefore the Pfaffians cubic fourfold $F_{BD}(S)$ corresponding to a general cubic fourfold  $F_{IR}(S)\in D_{IR}\cap D_{copl}$ is general in the sense of Lemma \ref{A(F)}, and does not contain the class of a  Veronese surface.

This concludes the proof of Proposition \ref{newprop19aout}.
\end{proof}

We conclude this section with the following result concerning the
divisor $D_{V-ap}$.
\begin{proposition} \label{propnonNL}
 The divisor $D_{V-ap}$  is not a Noether-Lefschetz divisor.
\end{proposition}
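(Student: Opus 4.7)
The plan is to show that for a very general $[F]\in D_{V-ap}$ the primitive Hodge group $H^{2,2}(F,\QQ)\cap H^4(F,\QQ)_{prim}$ vanishes. Every Noether-Lefschetz divisor in the moduli $\mathcal{M}$ of cubic fourfolds is one of Hassett's divisors $C_d$, defined as the locus where a nonzero primitive rational $(2,2)$-class persists, so producing a single $[F_0]\in D_{V-ap}$ with trivial primitive Hodge group will suffice.

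I would argue infinitesimally via the Jacobian ring. At a general $[F=s(g)]\in D_{V-ap}$ Kodaira--Spencer identifies $T_{[F]}\mathcal{M}$ with the degree-$3$ part $R_f^3$ of the Jacobian ring $R_f$, and by Proposition \ref{lefiniteveronese}(ii) the subspace $T_{[F]}D_{V-ap}\subset R_f^3$ is precisely the image of the $GL(W)$-equivariant map $s:R_g^6\to R_f^3$, where $R_g^6$ is the analogous $19$-dimensional space for the plane sextic $g$. Under the Macaulay self-pairing $R_f^3\times R_f^3\to R_f^6\cong\CC$ this hyperplane is written as $\mu_F^\perp$ for a unique class $\mu_F\in R_f^3$ up to scale, and under the Griffiths isomorphism $H^{2,2}(F,\CC)_{prim}\cong R_f^3$ the divisor $D_{V-ap}$ is Noether-Lefschetz precisely when $\mu_F$ can be rescaled to lie in $H^{2,2}(F,\QQ)_{prim}$.

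The concrete verification should take place at the explicit cubic $F_0=V(f(1,-1,1,-1,1))$ from the proof of Proposition \ref{lefiniteveronese}: using \emph{Macaulay2} one computes the hyperplane $s(R_g^6)\subset R_{f_0}^3$, extracts $\mu_{F_0}$ via the Macaulay pairing, and checks that $\mu_{F_0}$ is not proportional to any rational class. To reduce this to finitely many tests, one must bound a priori the discriminants $d$ of Hassett divisors that could conceivably contain $[F_0]$, which can be accomplished using the intersections of $D_{V-ap}$ with $D_{IR}$ and $D_{copl}$ studied earlier in the paper in order to constrain the possible rank-$2$ lattices $\langle h^2,\mu_F\rangle$.

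The principal obstacle I foresee is exactly this a priori bound on $d$: without it, infinitely many candidate rational classes would have to be ruled out, which is not feasible by explicit computation alone. A cleaner fallback is a monodromy argument, exploiting the parameterization of $D_{V-ap}$ by plane sextics and the large K3-type monodromy acting on the sextic side (through the double covers of $\PP^2$ branched along $C=V(g)$), then transferring this monodromy to the cubic-fourfold family in order to show that no nonzero rational class in $H^4(F,\QQ)_{prim}$ can be monodromy-invariant along $D_{V-ap}$.
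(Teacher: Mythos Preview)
Your approach is genuinely different from the paper's, and it has a real gap.

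The paper does not try to verify at any specific point that the primitive rational Hodge group vanishes. Instead it uses the purely algebraic fact that every Noether-Lefschetz divisor has \emph{smooth local branches} in the universal deformation space (this follows from the surjectivity of the cup-product map $\alpha:H^1(F,T_F)\to H^3(F,\Omega_F)$ for any nonzero $\alpha\in H^{2,2}_{prim}$). The paper then works at the Fermat cubic $f_{Fermat}=\sum X_i^3$, which lies on $D_{V-ap}$ because $s(\sum a_i^6)=\sum(a_i^2)^3$, and shows that the Zariski tangent space to any local branch of $D_{V-ap}$ there is the whole tangent space $T$ to moduli. Hence no branch is smooth, so $D_{V-ap}$ cannot be Noether-Lefschetz. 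This is a finite, algebraic computation with no periods involved.

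Your plan, by contrast, ultimately asks whether a specific element $\mu_{F_0}\in R_{f_0}^3$ is a complex multiple of a rational cohomology class. Under the Griffiths residue isomorphism $R_f^3\cong H^{2,2}(F,\CC)_{prim}$, the rational structure is determined by the transcendental periods of $F_0$; there is no way to decide ``$\mu_{F_0}$ is not proportional to a rational class'' by a finite Macaulay2 computation in the Jacobian ring. Bounding the Hassett discriminant $d$ would not help: for any fixed $d$ there are still infinitely many primitive integral classes, and in any case deciding $[F_0]\in C_d$ is itself a period question. Your monodromy fallback is also problematic: the monodromy you know how to compute is on $H^2$ of the K3 double cover of $\PP^2$ along $C$, but there is no mechanism to ``transfer'' it to $H^4(F,\QQ)_{prim}$; indeed the non-existence of any such Hodge-theoretic transfer is exactly the main theorem of the paper, so invoking it here would be circular. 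If you want a monodromy proof you would have to compute the monodromy of the family $f=s(g)$ directly on $H^4(F)_{prim}$, which is a different and harder calculation. The paper's smoothness-of-branches argument avoids all of this.
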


Here by a Noether-Lefschetz divisor (or component of the Hodge
loci, see  \cite{voisinhodgeloci}), we mean a divisor $D$ along
which a locally constant nonzero primitive rational cohomology class
in $H^4(F_b,\QQ),\,b\in D$, remains a Hodge class. Equivalently, as
the Hodge conjecture is satisfied by cubic fourfolds, the cubic
fourfolds $F_b$ parameterized by such a divisor carry a codimension
$2$ cycle whose cohomology class is not proportional to the class
$h^2$, $h=c_1(\mathcal{O}_{F_b}(1))$. Hodge theory shows that in the
case of cubic fourfolds, the Hodge loci are hypersurfaces in the
moduli space, as a consequence of the equality $h^{3,1}(F)=1$ (see
\cite{voisinhodgeloci}).
\begin{proof}[Proof of Proposition \ref{propnonNL}]  First of all, we recall
 that in the moduli stack $\mathcal{M}$ of smooth cubic fourfolds (or in the local universal family of deformations),
 Noether-Lefschetz divisors have a smooth normalization. More
 precisely, each local branch $\mathcal{M}_{\alpha}$ near a cubic fourfold $[F]$ is defined by a
 class $\alpha\in H^4(F,\QQ)_{prim}$, where
 $\mathcal{M}_\alpha$ is the ``locus of points $t\in\mathcal{M}$
 where the class $\alpha_t\in H^4(F_t,\QQ)_{prim}$  deduced from $\alpha$ by parallel transport is a Hodge class'',
 and the statement is that $\mathcal{M}_\alpha$ is smooth.
 We refer to \cite{voisinhodgeloci} for  various local descriptions
 of these Hodge loci and their local study.  The smoothness follows
 from \cite[Corollary 3.3]{voisinhodgeloci}, and from the following fact:
 \begin{lemma} Let $F$ be a nonsingular cubic fourfold, and $0\not=\alpha \in
 H^2(F,\Omega_F^2)_{prim}$.
  Then the cup-product-contraction map
  $$\lrcorner\alpha : H^1(F,T_F)\rightarrow H^3(F,\Omega_F)$$
  is surjective.
 \end{lemma}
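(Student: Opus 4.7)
The plan is to translate the cup-product map into a multiplication map in the Jacobian ring of $F$ via Griffiths's description of the Hodge decomposition of a smooth hypersurface, and then deduce surjectivity from the Gorenstein (Macaulay) duality of that ring.

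Write $F\subset\PP^5=\PP(V)$ as the zero locus of $f\in S^3V^*$, and let $J(f)=(\partial f/\partial X_0,\ldots,\partial f/\partial X_5)$ be the Jacobian ideal, with Jacobian ring $R=R(f)=S^\bullet V^*/J(f)$. Since $F$ is smooth, $R$ is Artinian and Gorenstein with socle concentrated in degree $\sigma=(d-2)(n+2)=1\cdot 6=6$ (Macaulay's theorem). Griffiths's description of primitive cohomology of a smooth hypersurface supplies canonical isomorphisms
\[
H^{3,1}(F)\cong R_0,\qquad H^{2,2}(F)_{prim}\cong R_3,\qquad H^{1,3}(F)\cong R_6,
\]
together with the Kodaira--Spencer identification $H^1(F,T_F)\cong R_3$ (embedded deformations coincide with abstract ones for a smooth cubic fourfold). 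Under these identifications, the cup-product/contraction pairing
\[
H^1(F,T_F)\otimes H^{2,2}(F)_{prim}\longrightarrow H^{1,3}(F)
\]
corresponds to the ordinary multiplication $R_3\otimes R_3\to R_6$ in the Jacobian ring. This is the standard translation of the infinitesimal variation of Hodge structure at the middle level into the Jacobian ring, and I would simply cite this.

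Once this translation is in place, the Gorenstein property of $R$ gives exactly the perfect duality pairing $R_k\times R_{6-k}\to R_6\cong\CC$ for each $k$. In particular, the pairing $R_3\times R_3\to R_6$ is non-degenerate. Thus for any nonzero $\alpha\in R_3$ there exists $\kappa\in R_3$ with $\kappa\cdot\alpha\neq0$ in $R_6$. Because $R_6\cong\CC$ is one-dimensional, this non-vanishing forces the map $\kappa\mapsto\kappa\cdot\alpha$ from $R_3\to R_6$ to be surjective, which is exactly the desired statement.

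The only mild care required is checking that the whole argument takes place in the primitive part: for a smooth cubic fourfold the non-primitive summand of $H^4(F,\CC)$ lives entirely in $H^{2,2}$ (spanned by $h^2$), so $H^{3,1}(F)$ and $H^{1,3}(F)$ are automatically primitive, and the Kodaira--Spencer identification of $H^1(F,T_F)$ with $R_3$ already sits inside the primitive side. There is therefore no obstacle coming from non-primitive classes; the real content is just Macaulay's duality for the Jacobian ring, which is the single ingredient doing all the work.
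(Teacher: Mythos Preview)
Your argument is correct and is precisely the first of the two approaches the paper indicates (but does not spell out): the paper says the lemma ``can be proved directly using Griffiths' description of the infinitesimal variations of Hodge structures of hypersurfaces,'' which is exactly your translation into the Jacobian ring together with Macaulay duality. The paper also mentions an alternative route via the Beauville--Donagi isomorphism with $H^2$ of the Fano variety of lines and the local Torelli theorem for hyper-K\"ahler manifolds, but your Jacobian-ring proof is the more direct option and matches the paper's first suggestion.
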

 This lemma can be proved directly using Griffiths' description of
 the infinitesimal variations of Hodge structures of hypersurfaces,
 or by using the  Beauville-Donagi  isomorphism between the
 variation of Hodge structures on $H^4(F,\QQ)_{prim}$ and the
 variation of Hodge structures on $H^2(L(F),\QQ)_{prim}$,
 where $L(F)$ is the Fano variety of lines of $F$, together with general properties of the period map
 for hyper-K\"ahler manifolds.

 The  universal family of deformations of the cubic Fermat
 hypersurface  $F_{Fermat}=V(f_{Fermat})$ in $\PP^5$ can be obtained  as follows: in
 $S^3V$ we choose a linear subspace $T$  which is transverse to the
 tangent space  at the point
 $f_{Fermat}$ to the orbit of $f_{Fermat}$ under $Gl(V)$, and we
 restrict the universal hypersurface in $S^3V\times \PP^5$ to
 $T\times\PP^5$, where $T$ is embedded in an affine way in
$S^3V$, by $t\mapsto f_{Fermat}+t$. Since  the differential at $(Id,0)$ of the
map $$Gl(V)\times T\rightarrow S^3V,$$
$$(\gamma,t)\mapsto \gamma(f_{Fermat})+t,$$
is an isomorphism,  it is a local isomorphism in  the analytic
topology, hence there is a neighborhood $U'$ of $f_{Fermat}$ in
$S^3V$ and a holomorphic retraction $\pi:U'\rightarrow U\subset T$
with the property that $\pi(g)$ is the unique point of intersection
of $U'\cap O_g$ with $T$ (where $O_g$ is the orbit of $g\in U$ under
$Gl(V)$).

 It is well-known (see \cite[Remark 6.16]{voisinbook}) that
the tangent space to the orbit of $f_{Fermat}$ at
$f_{Fermat}$ is the degree $3$ part of the Jacobian ideal of $f_{Fermat}$,
generated by the partial derivatives of $f_{Fermat}$. If we write
$f_{Fermat}=\sum_{i=0}^{i=5}X_i^3$, the Jacobian ideal $J_{f_{Fermat}}$ is generated by the $X_i^2$, so
 there is a natural such complementary subspace $T$;  the vector subspace
of $S^3V$ generated by the $X_iX_jX_k$ for $i,\,j,\,k$ all distinct.

As the map $s_{mod}: S^6W//Gl(W)\dashrightarrow S^3V//Gl(V)$ is induced by the  linear map $s:S^6W\rightarrow S^3V$, the divisor
$D_{V-ap}\subset S^3V//Gl(V)$ comes from a divisor $D_U$ in $U\subset T\subset S^3V$ ($U$ is
a analytic open set which will be the basis of a universal family of
deformations of $F_{Fermat}$), where $D_U$ is obtained as the image of the composition of the linear map
$s:S^6W\rightarrow S^3V$ with $\pi:U'\rightarrow U\subset T$, where it is defined.

The following proposition implies that $D_{V-ap}$ is not a
Noether-Lefschetz divisor, thus concluding the proof of the
proposition.
\end{proof}
 \begin{proposition}\label{proDNL} The local branches of the
divisor $D_{U}$ at the origin are singular.
\end{proposition}
\begin{remark}{\rm We cannot identify here $D_U$ with an open set of  $D_{V-ap}$. Indeed, $D_U$ is a divisor in the
universal family of deformations of $F_{Fermat}$, and its image $D_{V-ap}$ in $S^3V//Gl(V)$
is obtained by taking the quotient of $D_U$ by the group of automorphisms of $F_{Fermat}$, which is nontrivial.
If $D_{V-ap}$ is a Noether-Lefschetz divisor, then the divisor $D_U$ in the universal family of deformations must have smooth local branches.
The criterion, that a Noether-Lefschetz divisor has smooth local branches can be applied only in the universal family of deformations, which is itself smooth.
}
\end{remark}
\begin{proof}[Proof of Proposition \ref{proDNL}] We wish to exploit the following observation:
\begin{lemma} For a generic  sextic polynomial  $g\in S^6W$ which is the
sum of six $6$-th powers of elements of $W$, $f=s(g)$ is (conjugate
to) the Fermat polynomial  $g_F=\sum_{i=0}^{i=5}X_i^3$.

\end{lemma}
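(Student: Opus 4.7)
The plan is to exploit the key identity $s(a^6)=(a^2)^3$ from equation (\ref{letrivial}), which reduces the statement to a linear-independence question on the Veronese surface. If we write a generic element of the image of the ``six powers'' map as $g=\sum_{i=1}^{6} a_i^6$ with $a_i\in W$, then by linearity of $s$ we have
\[
s(g)=\sum_{i=1}^{6}(a_i^2)^3\in S^3V.
\]
Setting $v_i:=a_i^2\in S^2W=V$, the claim reduces to showing that $v_1,\dots,v_6$ form a basis of $V$ for generic $(a_1,\dots,a_6)\in W^6$. Once this is established, one picks any $\gamma\in GL(V)$ sending the basis $(v_1,\dots,v_6)$ to the standard basis $(X_0,\dots,X_5)$; then $\gamma\cdot s(g)=\sum_{i=0}^{5}X_i^3=g_F$, as desired.

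First I would verify the linear independence. The six vectors $v_i=a_i^2$ are, up to scalar, the images of the points $[a_i]\in\PP(W)$ under the second Veronese embedding $\nu_2\colon\PP(W)\to\PP(S^2W)=\PP(V)$. The Veronese surface $\Sigma=\nu_2(\PP(W))$ is nondegenerate in $\PP^5$: any linear form on $V$ restricting to zero on $\Sigma$ corresponds to a ternary quadric vanishing on all of $\PP(W)$, hence is zero. A standard argument (pick points one at a time, each time avoiding the proper linear subspace spanned by the previous ones, which intersects $\Sigma$ only in a proper closed subset since $\Sigma$ is nondegenerate and irreducible) shows that six generic points of $\Sigma$ span $\PP^5$. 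Equivalently, the map
\[
W^6\longrightarrow \bigwedge^{6}V,\qquad (a_1,\dots,a_6)\longmapsto a_1^2\wedge\cdots\wedge a_6^2,
\]
is not identically zero, so its nonvanishing is a Zariski-dense open condition.

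Second, I would note that the locus of $g\in S^6W$ of the form $\sum a_i^6$ is irreducible (being dominated by the irreducible variety $W^6$), and the condition ``the six quadrics $a_i^2$ are a basis of $V$'' is open and nonempty on the parameter space $W^6$ by the previous paragraph, so it is a generic condition on $g$ as well. For any such $g$, the construction of $\gamma$ above produces an element of $GL(V)$ carrying $s(g)$ to $g_F$, proving the lemma.

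I do not anticipate a real obstacle: the argument is essentially the observation that the Veronese surface is nondegenerate, combined with the trivial identity (\ref{letrivial}). The only mildly subtle point is that one must be careful with the parameter space: we are given a generic element of the \emph{image} of the six-powers map rather than a generic tuple in $W^6$, but since that image is irreducible and the linear independence condition on $(a_1,\dots,a_6)$ is open and nonempty, it descends to a generic condition on $g$.
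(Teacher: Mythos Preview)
Your proof is correct and follows exactly the paper's approach: apply the identity $s(a^6)=(a^2)^3$ to reduce to showing that for generic $a_0,\dots,a_5\in W$ the squares $a_i^2$ form a basis of $V=S^2W$, then conjugate by the element of $GL(V)$ sending this basis to the standard one. Your treatment is simply more detailed than the paper's one-line argument, in particular your remark on descending the open condition from $W^6$ to the image is a welcome clarification.
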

\begin{proof} This follows immediately from formula (\ref{letrivial}), which
says that if $g=\sum_{1=0}^{i=5}a_i^6$ then $f=\sum_{i=0}^{i=5}
(a_i^2)^3$. On the other hand, for a generic choice of the $a_i$'s,
the $a_i^2$ provide a basis $X_i, i=0,...,5$ of $V=S^2W$.
 \end{proof}
We fix $a_0,\ldots ,a_5$ providing a basis $X_i=a_i^2, i=0,...,5$ of $V$. For
any ${b}_\bullet=(b_0,\ldots,b_5)\in W^6$ and $b\in W$, we consider
the curve in $S^6W$ parameterized by the coordinate $t$,  of the
form
$$t\mapsto g_{b_\bullet,b,t}:=\sum_{i=0}^{i=5}b_i^6+tb^6\in S^6W.$$
At $t=0$, the corresponding  curve $t\mapsto s(g_{b_\bullet,b,t})\in
S^3V$ passes through $s(\sum_{i=0}^{i=5}b_i^6)$, which is equal to
$\sum_{i=0}^{i=5}(b_i^2)^3\in S^3V$. The later polynomial is not
equal for generic $b_\bullet$ to the Fermat polynomial
$f_{Fermat}=\sum_iX_i^3$ but it is canonically conjugate to it,
namely, let $\gamma_{b_\bullet}\in Gl(V)$ be determined by
$$\gamma_{b_\bullet}(b_i^2)=X_i,\,i=0,\ldots, 5.$$
Then we have
$$\gamma_{b_\bullet}(s(\sum_{i=0}^{i=5}b_i^6))=f_{Fermat},$$
and may conclude that the curve
$$t\mapsto f_{b_\bullet,b,t}:=\gamma_{b_\bullet}(s(g_{b_\bullet,b,t}))\in S^3V,\,t\in \CC$$
passes through $f_{Fermat}$ at $t=0$. By definition, its image in
$S^3V//Gl(V)$ is contained in ${\rm Im}\,{s}_{mod}$. Furthermore,
for small $t$, $f_{b_\bullet,b,t}$ belongs to the small open set
where the holomorphic retraction $\pi:U\rightarrow T$ is  defined,
so that
 $\pi(f_{b_\bullet,b,t})\in D_{U}$ for any such
$(b_\bullet,t)$.  Thus there must be one branch $D'_U$ of $D_U$ such
that $\pi(f_{b_\bullet,b,t}))\in D'_U$ for any $(b_\bullet,t)$,
since the parameter space for the family $f_{b_\bullet,b,t}$ is
smooth hence in particular normal.
Let us now prove that $D'_U$ is not smooth at the point $f_{Fermat}$.
The
derivative at $0$ with respect to  $t$ of the  holomorphic map
$$t\mapsto \pi(f_{b_\bullet,t})
\in T$$ is obtained by applying the projection
$$p:S^3V\rightarrow T\cong S^3V/J_{f_{Fermat}}$$
 to
$\gamma_{b_\bullet}(s(b^6))=\gamma_{b_\bullet}((b^2)^3)$. The
above reasoning shows that all these elements lie in the Zariski
tangent space $T_{D'_U,0}$ at the point $0$ (parameterizing the Fermat
equation). The proof that $D'_U$ is not smooth is thus concluded with
the following lemma:
\begin{lemma} The set $S$ of  elements $p(\gamma_{b_\bullet}((b^2)^3))\in T$
generates $T$ as a vector space.
\end{lemma}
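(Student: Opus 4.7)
The plan is to factor the map $(b_\bullet, b) \mapsto p(\gamma_{b_\bullet}((b^2)^3))$ through $V$, exploiting that $\gamma_{b_\bullet}\in GL(V)$ is linear so that
\[\gamma_{b_\bullet}((b^2)^3) = (\gamma_{b_\bullet}(b^2))^3.\]
Setting $v := \gamma_{b_\bullet}(b^2) \in V$, the argument will split in two: (i) I will show that the polynomial map $V \to T$, $v \mapsto p(v^3)$, has image spanning $T$; and (ii) I will show that the evaluation map $\Psi: W^6\times W \dashrightarrow V$, $(b_\bullet, b)\mapsto \gamma_{b_\bullet}(b^2)$, has Zariski dense image. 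Combining (i) and (ii) is then a routine density argument: any linear form on $T$ vanishing on $\{p(v^3): v\in\operatorname{Im}(\Psi)\}$ vanishes on all of $V$ by continuity and density, hence is zero by (i).

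For (i), recall that $T$ is the subspace of $S^3V$ spanned by the twenty monomials $X_iX_jX_k$ with $i<j<k$, while $J_{f_{Fermat}} = (X_0^2,\ldots,X_5^2)$. For $v = \sum_i \nu_i X_i \in V$ the multinomial expansion of $v^3$ reduces modulo $J_{f_{Fermat}}$ to
\[p(v^3) = 6\sum_{i<j<k}\nu_i\nu_j\nu_k\, X_iX_jX_k,\]
since every other monomial carries a factor $X_i^2$. Taking $\nu = e_i+e_j+e_k$ (the sum of three standard basis vectors of $\CC^6$) yields $p(v^3)=6\, X_iX_jX_k$, so all $20$ basis vectors of $T$ are realized.

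For (ii), the coordinates $\nu_i$ of $\gamma_{b_\bullet}(b^2)$ in the basis $\{X_i\}$ are characterized by the identity $b^2 = \sum_i \nu_i b_i^2$ in $V$. Differentiating this identity at a generic $(b_\bullet, b)$ (where $(b_i^2)$ is a basis of $V$) in the direction $(\delta b_\bullet, \delta b)$ yields
\[\sum_i \delta\nu_i\, b_i^2 \;=\; 2b\,\delta b \;-\; 2\sum_i \nu_i\, b_i\,\delta b_i \quad\text{in } V.\]
The subspace $b_iW \subset V$ contains $b_i^2$, so $\sum_i b_iW$ already contains all six vectors $b_i^2$ and hence equals $V$ for generic $b_\bullet$. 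Choosing $(b_\bullet, b)$ so that additionally all $\nu_i\neq 0$ (a nonempty Zariski open condition), the right hand side sweeps out all of $V$ as $(\delta b_\bullet, \delta b)$ vary in $W^6\times W$. Since $\{b_i^2\}$ is a basis, the left hand side then takes all values in $V$, so $\delta\nu$ ranges over $\CC^6$. Thus $d\Psi$ is surjective at a generic point, and $\Psi$ is dominant.

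The main obstacle is the surjectivity of $d\Psi$ in step (ii); however it reduces cleanly to the simple fact that $b_i^2\in b_iW$ combined with the genericity of $b_\bullet$. The rest of the argument is formal.
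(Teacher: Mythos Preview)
Your argument is correct, and it is a genuinely different route from the paper's. You factor the map through $V$ via $v=\gamma_{b_\bullet}(b^2)$ and then combine (i) the elementary spanning computation $p(v^3)=6\sum_{i<j<k}\nu_i\nu_j\nu_k X_iX_jX_k$ with (ii) a dominance argument for $\Psi:(b_\bullet,b)\mapsto \gamma_{b_\bullet}(b^2)$. Both steps are sound: in (ii), your key observation that $b_i^2\in b_iW$ forces $\sum_i b_iW=V$ whenever the $b_i^2$ form a basis, and the condition that all $\nu_i\neq 0$ is indeed generically satisfied since the Veronese cone lies in no hyperplane of $V=S^2W$.

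The paper, by contrast, avoids any density or dominance argument and directly exhibits, for each unordered triple $\{i,j,k\}$, a specific pair $(b_\bullet,b)$ with $p(\gamma_{b_\bullet}((b^2)^3))$ a nonzero multiple of $X_iX_jX_k$. The trick is to take three of the $b_\ell$'s, say $b_0,b_1,b_2$, inside a $2$-plane $\langle Y_0,Y_1\rangle\subset W$ (concretely $b_0=Y_0$, $b_1=Y_1$, $b_2=Y_0+Y_1$) and choose $b$ in the same plane; then $b^2$ expands in the $b_\ell^2$-basis with only the first three coordinates nonzero, so $\gamma_{b_\bullet}(b^2)\in\langle X_0,X_1,X_2\rangle$ and its cube projects to a multiple of $X_0X_1X_2$. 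This is slightly more hands-on but entirely explicit and requires no genericity reasoning. Your approach, on the other hand, is more structural and would adapt immediately to analogous situations where writing down explicit witnesses is less convenient.
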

\begin{proof} Choose two independent elements $Y_0,Y_1$ of $W$. Then the
three elements $Y_0^2,\; Y_1^2,$ $ (Y_0+Y_1)^2$ are independent in $S^2W$.
For a generic choice of $a_3,\,a_4,\,a_5\in W$, the set
$$Y_0^2,Y_1^2,(Y_0+Y_1)^2, a_3^2,\,a_4^2,\,a_5^2$$ forms a basis of
$V$. We choose
$$b_\bullet=(Y_0,Y_1,Y_0+Y_1,a_3,a_4,a_5).$$
Then
\begin{eqnarray}
\label{eqgamma}
\gamma_{b_\bullet}(Y_0^2)=X_0,\,\gamma_{b_\bullet}(Y_1^2)
=X_1,
\\
\nonumber
\gamma_{b_\bullet}((Y_0+Y_1)^2)=X_2,\,\gamma_{b_\bullet}(a_i^2)=X_i,\,i=3,4,5.
\end{eqnarray}

 Choose now for $b$ a
generic linear combination of $Y_0$ and $Y_1$. Then we can write
$b^2=\alpha Y_0^2+\beta Y_1^2+\gamma (Y_0+Y_1)^2$, with the
coefficients $\alpha,\,\beta,\,\gamma$ all nonzero. It follows that
$$(b^2)^3=6 \alpha\beta\gamma (Y_0)^2(Y_1)^2(Y_0+Y_1)^2
+P((Y_0)^2,\,(Y_1)^2,\,(Y_0+Y_1)^2)$$ where the cubic polynomial $P$
contains all monomials in $(Y_0)^2,\,(Y_1)^2,\,(Y_2)^2$ containing
at least one quadratic power of one of the variables. Applying the
transformation $\gamma_{b_\bullet}$ of (\ref{eqgamma}) and the projection $p$, we get
$$p(\gamma_{b_\bullet}((b^2)^3))=6 \alpha\beta\gamma X_0X_1X_2$$
since all the monomials in the $X_j$ containing at least a quadratic
power of the variables are in $J_{f_{Fermat}}^3$. We thus proved that the
set $S$ contains $X_0X_1X_2$, and the same proof would show  that
$S$ contains $X_iX_jX_k$ for arbitrary distinct  indices
$i,\,j,\,k$. Thus $S$ generates $T$ as a vector space.

\end{proof}

The proof of Proposition \ref{proDNL} is finished.
 \end{proof}

\section{Local structure of $VSP$ for  a cubic fourfold apolar to a Veronese surface
\label{secpropsing}}

Let $W$ be a $3$-dimensional vector space, and let $g\in S^6W$, $f=s(g)\in S^3(S^2W)$ be as in the previous section, i.e. $C=V(g)$ is a plane sextic curve, and $F=V(f)$ is a cubic fourfold. Our goal in this
section is to prove the following theorem (from which Theorem
\ref{propsingintro} of the introduction immediately follows):
\begin{theorem}\label{propsing} Assume that $g$ is a general ternary sextic form, and let $f=s(g)$.

(i)     The variety $VSP(F,10)$ is smooth of dimension $4$ away from the $K3$ surface
$S_g=VSP(C,10)$. In particular, there is only one Veronese surface apolar to $f$, so we may denote by $S_f$ the
surface $S_g$.

(ii) The  singularities of $VSP(F,10)$  are quadratic nondegenerate
in the normal direction to $S_f$ at any point of $S_f$.
\end{theorem}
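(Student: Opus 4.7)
The plan is to use the description of $VSP(F,10)$ from Proposition \ref{propapdivapolar}: it is the zero locus $\sigma_f^{-1}(0)$, inside the open set $U \subset \mathrm{Hilb}_{10}(\PP^5)$ that by Corollary \ref{corleb} avoids the singular locus of the Hilbert scheme, of a section $\sigma_f$ of the rank $46$ vector bundle $\mathcal{E}$ on $U$ whose fiber at $[Z]$ is $I_Z(3)^*$. Both parts of the theorem then translate to infinitesimal information about $\sigma_f$ at each $[Z] \in VSP(F,10)$: for (i), surjectivity of the differential $d\sigma_f$ off $S_g$; for (ii), the structure of its Hessian along $S_g$.

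My first step is to identify, at any $[Z] \in VSP(F,10)$, the cokernel of $d\sigma_f \colon T_{[Z]} U \to \mathcal{E}_{[Z]} = I_Z(3)^*$ with the space of cubic forms on $\PP(V) = \check{\PP^5}$ that are singular at every point of $Z$; this is the same Terracini-type identification that already underlies Lemma \ref{singvsp}. For $[Z] \in S_g$ general, the points of $Z$ sit on the Veronese surface $\Sigma \subset \PP(S^2W) = \PP(V)$, and the discriminant cubic, which is singular precisely along $\Sigma$, produces one such cubic; a direct parameter count combined with genericity of $g$ rules out any further one, so the cokernel is one-dimensional. For $[Z] \in VSP(F,10) \setminus S_g$ I would show the cokernel vanishes: generically ten points impose $10\cdot 6 = 60 > 56 = \dim S^3V^*$ conditions on cubics, so carrying a cubic singular along $Z$ is a proper closed condition in $\mathrm{Hilb}_{10}(\PP^5)$, and a further genericity argument on $g \in D_{V-ap}$ reduces its intersection with $VSP(F,10)$ to $S_g$.

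Granting both claims, part (i) is immediate: on $VSP(F,10) \setminus S_g$ the differential $d\sigma_f$ is surjective, hence $VSP(F,10)$ is smooth of dimension $50 - 46 = 4$ there. Uniqueness of the Veronese surface apolar to $f$ follows a posteriori, because by Corollary \ref{lesingver} every Veronese surface $\Sigma' \subset \PP(V)$ apolar to $f$ would contribute a $K3$ surface of singularities to $VSP(F,10)$, yet the full singular locus is now known to lie in the irreducible surface $S_g$.

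For (ii) I would analyze the Hessian of $\sigma_f$ at $[Z] \in S_g$, which descends to a well-defined quadratic form
\[
H_f \colon \mathrm{Sym}^2\bigl(T_{[Z]} VSP(F,10)_{\mathrm{Zar}} / T_{[Z]} S_g\bigr) \longrightarrow \mathrm{coker}(d\sigma_f) \cong \CC
\]
on the three-dimensional transverse Zariski tangent direction. Using the normal bundle sequence
\[
0 \to N_{Z/\Sigma} \to N_{Z/\PP^5} \to N_{\Sigma/\PP^5}|_Z \to 0,
\]
I would split deformations of $Z$ into tangential ones (in $\Sigma$, containing $T_{[Z]} S_g$) and normal ones (off $\Sigma$); the three transverse Zariski tangent directions are then cut out inside $H^0(Z, N_{\Sigma/\PP^5}|_Z)$ by first-order apolarity to $f$. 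One computes $H_f$ by expanding $\sigma_f$ to second order along a normal arc and pairing the resulting element of $I_Z(3)^*$ with the discriminant cubic that spans the cokernel. The main obstacle is proving this quadratic form is nondegenerate at every point of $S_g$; I would either identify it with a classical nondegenerate Hessian-type pairing coming from the geometry of $\Sigma$, or verify one explicit instance (for example the cubic $f(1,-1,1,-1,1)$ of Proposition \ref{lefiniteveronese}) by Macaulay2 and then invoke Zariski openness of ``ordinary quadratic singularities along the universal $K3$ surface $\mathcal{S}_g$'' inside the family $\mathcal{V}SP(F,10) \to D_{V-ap}$ to conclude at every $[Z] \in S_g$ for generic $g$.
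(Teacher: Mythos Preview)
Your framework is sound: the Terracini identification of $\mathrm{coker}(d\sigma_f)_{[Z]}$ with the space of cubics singular along $Z$ is correct and is exactly what underlies Lemma~\ref{singvsp}, and analyzing the Hessian on the normal bundle is the right approach to (ii). But both parts contain a genuine gap where you invoke ``genericity'' or ``openness'' as a black box precisely at the step that carries the actual content.

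For (i), the assertion that for generic $g$ no $[Z]\in VSP(F,10)\setminus S_g$ carries a cubic singular along $Z$ is the heart of the matter, and your dimension count ($60>56$) only shows this locus is proper in $\mathrm{Hilb}_{10}(\PP^5)$; it says nothing about how it meets the $4$-dimensional $VSP(F,10)$ as $f$ ranges over the divisor $D_{V-ap}$. The paper does not attempt to control this intersection directly. Instead it passes to the universal family $\mathcal{V}SP_{V-ap}\to D_{V-ap}$ and proves (Lemma~\ref{leencore}) that the total space is smooth away from $\mathcal{S}$. The key point there is that $T_{D_{V-ap},f}=h^{\perp}$ where $h$ is the discriminant cubic, and one then argues: if $h\notin I_Z(3)$ the relevant restriction map is surjective; if $h\in I_Z(3)$, one uses that $u\mapsto u(h)\in H^0(\mathcal{O}_Z(3))$ is $H^0(\mathcal{O}_Z)$-linear, so if its image lay in $H_f/I_Z(3)$ it would cut out a proper apolar subscheme of $Z$, contradicting cactus rank $10$ (Proposition~\ref{lefiniteveronese}). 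Smoothness of the generic fiber then follows by Sard. Your ``further genericity argument'' would have to reproduce this, and it is not a routine parameter count. The same remark applies to your claim that the cokernel at $[Z]\in S_g$ is exactly one-dimensional: the paper gets this from smoothness of the universal family together with smoothness of $\mathcal{S}$ (Lemma~\ref{leembdim}), not from a direct count.

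For (ii), your second strategy (verify one instance, then use Zariski openness over the universal $K3$ family) only yields nondegeneracy on a dense open subset of $S_f$ for generic $f$: the degeneracy locus in $\mathcal{S}$ could be a divisor dominating $D_{V-ap}$, in which case every $S_f$ would meet it. The paper closes this gap with an ingredient you are missing: Lemma~\ref{ledettrivial} shows $\det N_{S_f}$ is trivial (using that $K_{VSP(F,10)}$ is trivial in the family and that $\mathrm{coker}(d\sigma)$ is the trivial line bundle $\mathrm{Hom}(\CC h,S^3V^*/H_f)$ along $S_f$). Hence the discriminant of the Hessian is a section of a trivial line bundle on $S_f$, i.e.\ a constant, so nondegeneracy at one point forces it at all points. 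Your first strategy (identify the Hessian with a classical pairing) is closer to what the paper actually does: via Lemma~\ref{leorthog} the Hessian on $N_{S_f,[Z]}$ is the restriction of $\sum_i p_i\, d^2h_{x_i}$, and $N_{S_f,[Z]}$ is the orthogonal complement of $\mathrm{Im}\bigl(H^0(\Sigma,N_\Sigma)\to\bigoplus_i N_{\Sigma,x_i}\bigr)$; nondegeneracy on the $3$-dimensional piece is then equivalent to nondegeneracy on the $27$-dimensional complement, which is easy to arrange for general $x_i,p_i$. But even this explicit route only gives the generic point of $S_f$ and still needs the determinant-triviality to propagate to every point.
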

\begin{proof}[Proof of Theorem \ref{propsing}, (i)]
We know, by Corollary \ref{codim1},  that the set of cubics
apolar to a Veronese surface is a divisor $D_{V-ap}$  in the  space parameterizing
 all cubics.  Let
\begin{eqnarray}\label{eqvspapolar}\mathcal{V}SP_{V-ap}:=\{([Z],[f])\in
 \text{Hilb}_{10}(\PP^5)\times
 D_{V-ap},\,\,I_Z(3)\subset
 H_f\}
 \end{eqnarray}
 be the universal family of $VSP$'s of cubics apolar to a Veronese
 surface, with projection
 $$pr_2: \mathcal{V}SP_{V-ap}\to \PP(S^3V).$$  We consider the dense open $D^0_{V-ap}\subset D_{V-ap}$ defined as the set of points $[f]$
in the smooth locus of $D_{V-ap}$ such that $f=s(g)$ for a ternary sextic form $g$ of rank $10$ for which $S_g$ is integral, and Corollary \ref{corleb}
and Propositions \ref{propapdivapolar} and \ref{exampleveronese},
(i) are satisfied.

 We prove the following:

 \begin{lemma}\label{leencore}  Let $[f]\in D^0_{V-ap}$.  Then there is only one
 Veronese surface that is apolar to
 $f$, thus determining a unique curve $C$ defined by a ternary sextic form $g$ such that  $S_g=VSP(C,10)\subset VSP(F,10)$. In this case we denote by $S_f$ this surface $S_g$.

 Furthermore, denoting by $\mathcal{V}SP_{V-ap,0}$ the restriction to
 $D^0_{V-ap}$ of the family $\mathcal{V}SP_{V-ap}$,
  $\mathcal{V}SP_{V-ap,0}$ is
 nonsingular away from the family
 $\mathcal{S}\subset \mathcal{V}SP_{V-ap,0}$ of
 surfaces $S_f$.
 \end{lemma}

\begin{proof}

We
identify $f$, as before, with a hyperplane $H_f$ in $S^3V^*$.   Let
$$K\subset \text{Hom}(H_f,S^3V^*/H_f)=T_{\PP(S^3V),f})$$
be the tangent space of $D_{V-ap}$ at $[f]$, with $f=s(g)$ for some $g\in
H^0(\PP^2,\mathcal{O}_{\PP^2}(6))$ and some Veronese embedding
$\Sigma\subset\PP(V)$ of $\PP^2$. We denote again by $h\in S^3V^*$
the discriminant cubic form,  such that $V(h)$ is singular along $\Sigma$. Notice that $h\in H_f$, since $f$ is apolar to $\Sigma$.

First of all, we claim  that $K^{\perp}$ is generated by $h$. As $K$
is a hyperplane, it suffices to show that $h$ belongs to $K^{\perp}$, i.e. that  $\gamma(h)=0$ for every $\gamma\in K$.

Let $[Z]$ be a general point in $S_g\subset VSP(F,10)$. Then $Z$ is contained in a unique Veronese surface $\Sigma$ apolar to $f=s(g)$, and $\mathcal{V}SP_{V-ap}$ contains a
family
$$\mathcal{S}_U=\{[Z'],[f'])\in U|[Z']\in S_{g'}\subset VSP(F',10); f'=s(g'), F'=V(f')\}\subset \mathcal{V}SP_{V-ap}$$
 of surfaces  in a neighborhood $U$ of the point $([Z],[f])$.  Since $S_g$ is integral, we may assume that $\mathcal{S}_U$ is smooth at $([Z],[f])$.  On the other hand, the image
 $pr_2(\mathcal{S}_U)\subset \PP(S^3V)$ is dense in $D_{V-ap}$.
Now, let 
$T_{\mathcal{V}SP_{V-ap},([Z],[f])}$ be the Zariski tangent space to ${\mathcal{V}SP_{V-ap}}$ at $([Z],[f])$.   It contains  the tangent space $T_{\mathcal{S}_U,([Z],[f])}$, so since $pr_2(\mathcal{S}_U)$
is dense in $D_{V-ap}$,
the tangent space $K$ to $D_{V-ap}$ at $[f]$ is the image of the linear map
$$pr_{2*}:
T_{\mathcal{V}SP_{V-ap},([Z],[f])}\rightarrow T_{\PP(S^3V),f}.$$
So to prove the claim  it suffices to prove that the discriminant cubic form $h$ belongs to the orthogonal of
$$\text{Im}\,(pr_{2*}:
T_{\mathcal{V}SP_{V-ap},([Z],[f])}\rightarrow T_{\PP(S^3V),f}).$$ Since $g$ has rank $10$, we may assume that the scheme $Z$ consists of ten distinct points that 
impose independent conditions on cubics, so
we can identify
 $T_{\text{Hilb}_{10}(\mathbb{P}(V)),[Z]}$ with $H^0(T_{\PP^5\mid
 Z})$, and furthermore $H^0(T_{\PP^5\mid
 Z})$ with $\text{Hom}_{\mathcal{O}_{\PP(V)}}\,(\mathcal{I}_Z,\mathcal{O}_Z)$.
We have then the following description of the tangent
space of
 $\mathcal{V}SP$ at $([Z], [f]):$

 \begin{eqnarray}\label{eqtanspace2} T_{([Z],[f])}:=\{(u,\gamma)\in \text{Hom}_{\mathcal{O}_{\PP(V)}}\,(\mathcal{I}_Z,\mathcal{O}_Z)
 \times \text{Hom}(H_f,S^3V^*/H_f),
 \\
 \nonumber
 \gamma_{\mid
 I_Z(3)}=p\circ d_u: I_Z(3)\rightarrow S^3V^*/H_f\},
 \end{eqnarray}
 where $d_u: I_Z(3)\rightarrow H^0(\mathcal{O}_Z(3))$ is the map induced by $u\in \text{Hom}_{\mathcal{O}_{\PP(V)}}\,(\mathcal{I}_Z,\mathcal{O}_Z)$ on global sections, and   $p: H^0(\mathcal{O}_Z(3))\rightarrow S^3V^*/H_f$ is deduced
 from the quotient map $S^3V^*\rightarrow S^3V^*/H$,
 using the fact that the restriction map $S^3V^*\rightarrow
 H^0(\mathcal{O}_Z(3))$ is surjective and that its kernel $I_Z(3)$
 is contained in $H_f$.
We just have to prove that for $\gamma$ satisfying the equation
(\ref{eqtanspace2}), we have
\begin{eqnarray}\label{eqalpha=0} \gamma(h)=0.
\end{eqnarray}
But as $h\in I_Z(3)$, we get $\gamma(h)=d_u(h)$ modulo $H_f$, and
since $h$ is singular along $Z$, $d_u(h)=0$, which proves
(\ref{eqalpha=0}).
 The claim is thus proved.

Note that the claim   proves in particular that for $[f]\in
D_{V-ap}^0$, there is a unique Veronese surface apolar to
$f$ since it says that the cubic $h$ is determined by
$K=T_{D^0_{V-ap},[f]}$ and on the other hand it determines $\Sigma$,
because $\Sigma$ is the singular locus of $V(h)$.

 The proof of
the smoothness of $\mathcal{V}SP_{V-ap,0}$ away from $\mathcal{S}$
will now use the fact that the discriminant cubic with equation $h$
is smooth away from $\Sigma$. The argument goes as follows: Let
$[f]\in D^0_{V-ap}$, $[Z]\in VSP(F,10)$, $[Z]\not\in S_f$ and
$K=T_{D^0_{V-ap},[f]}$. Recall that the conclusion of Corollary
\ref{corleb} holds, so that $[Z]$ is a smooth point of
$\text{Hilb}_{10}(\PP(V))$. Furthermore, Proposition
\ref{propapdivapolar} also holds, so $Z$ is apolar to $f$ and
imposes independent conditions on cubics. Hence
 $I_Z(3)\subset H_f$, and this property gives us the local
equations for $VSP(F,10)$ inside $\text{Hilb}_{10}(\PP(V))_{reg}$.
Differentiating these equations, the Zariski tangent space to
$\mathcal{V}SP_{V-ap}$ at $([Z],[f])$ is thus given as before by
\begin{eqnarray}\label{eqtanspaceKK}
 T_{\mathcal{V}SP_{V-ap},([Z],[f])}:=\{(u,\alpha)\in \text{Hom}_{\mathcal{O}_{\PP(V)}}\,(\mathcal{I}_Z,\mathcal{O}_Z)
\times K,
\\
 \nonumber
 \alpha_{\mid
 I_Z(3)}=p\circ d_u: I_Z(3)\rightarrow S^3V^*/H_f\},
 \end{eqnarray}
where $K$ is the  hyperplane in $\text{Hom}(H_f,S^3V^*/H_f)$ of
linear forms vanishing on $h$.
The variety $\mathcal{V}SP_{V-ap}$ is smooth at $([Z],[f])$ if the
restriction map
$$\rho_K:K\rightarrow\text{Hom}(I_Z(3),S^3V^*/H_f)$$
 is surjective, since this implies
 that the linear equations  in (\ref{eqtanspaceKK}) defining
the Zariski tangent space to $\mathcal{V}SP_{V-ap}$ at $([Z],[f])$,
which are nothing but the differentials of the equations defining
$\mathcal{V}SP_{V-ap}$, are linearly independent.

1)  If $h$ does not  vanish identically on $Z$, then the hyperplane $K\subset \text{Hom}(H_f,S^3V^*/H_f)$ does not contain the kernel of the surjective map
$$\text{Hom}(H_f,S^3V^*/H_f)\rightarrow\text{Hom}(I_Z(3),S^3V^*/H_f)$$
so the restriction map $\rho_K$ is surjective.

2) If $h$ vanishes on $Z$, the image of the map
$K\rightarrow\text{Hom}(I_Z(3),S^3V^*/H_f)$ is the set of
linear forms on $I_Z(3)$ vanishing on $h\in I_Z(3)$.
Therefore, the linear equations in
(\ref{eqtanspaceKK}), parameterized by $\text{Hom}_{\mathcal{O}_{\PP(V)}}\,(\mathcal{I}_Z,\mathcal{O}_Z)$, are linearly dependent only if
the map
$$\text{Hom}_{\mathcal{O}_{\PP(V)}}\,(\mathcal{I}_Z,\mathcal{O}_Z)\rightarrow
\CC,$$
$$u\mapsto d_u(h)\,\,\text{mod}\,\,H_f$$
is the zero map.
 But the map \begin{eqnarray} \label{eqdu} u\mapsto
d_u(h)\in H^0(\mathcal{O}_Z(3)) \end{eqnarray} is
$H^0(\mathcal{O}_Z)$-linear. So if its image is contained in
$H_f/I_Z(3)$, it provides a sub-$H^0(\mathcal{O}_Z)$-module  of
$H^0(\mathcal{O}_Z(3))$ which is the ideal of a subscheme of $Z$
apolar to $f$. By Proposition \ref{exampleveronese}, (i), this
implies that this ideal is equal to $0$, that is, the map
(\ref{eqdu}) is $0$.

In conclusion, if $\mathcal{V}SP_{V-ap}$ is singular at $([Z],[f])$, then $Z$ is contained in the singular locus of
$h$, hence in $\Sigma$. In other words, $[Z]$ belongs to $S_f$.

\end{proof}
 Lemma  \ref{leencore}  implies (i) by a Sard type argument and this concludes the proof of
 Theorem \ref{propsing}, (i).

\end{proof}
\begin{proof}[Proof of Theorem \ref{propsing}, (ii)]
We first prove the following result:
\begin{lemma} \label{leembdim} For  general $g$ and $f=s(g)$, the
embedding dimension of $VSP(F,10)$  is $5$ at any point of $S_f$.
\end{lemma}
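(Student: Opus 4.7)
The plan is to compute the Zariski tangent space of $VSP(F,10)$ at $[Z]\in S_f$ and prove it has dimension exactly $5$. By Propositions \ref{allapolar} and \ref{propapdivapolar}, $VSP(F,10)$ is locally at $[Z]$ the scheme-theoretic zero locus inside $\text{Hilb}_{10}(\PP(V))_{reg}$ of a section $\sigma$ of the rank $46$ bundle $\mathcal{E}$ with fiber $I_Z(3)^*$, defined by $\sigma([Z])=f^{*}|_{I_Z(3)}$. Its differential at $[Z]$ is
\[
d\sigma:\text{Hom}(\mathcal{I}_Z,\mathcal{O}_Z)\longrightarrow I_Z(3)^{*},\qquad u\longmapsto\bigl(\phi\mapsto\bar f(u(\phi))\bigr),
\]
where $\bar f\in H^0(\mathcal{O}_Z(3))^{*}$ is the linear form induced by $f$ via apolarity, and $T_{[Z]}VSP(F,10)=\ker d\sigma$.

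I would first establish the lower bound $\dim T_{[Z]}VSP(F,10)\geq 5$. The discriminant cubic $h\in I_\Sigma(3)\subset I_Z(3)$ is singular at every point of $\Sigma\supset Z$, so $dh|_{p_i}=0$ for each $p_i\in Z$. This forces $u(h)=0\in H^0(\mathcal{O}_Z(3))$ for every $u$, hence $d\sigma(u)(h)=0$. The image of $d\sigma$ therefore lies in the hyperplane $\langle h\rangle^{\perp}\subset I_Z(3)^{*}$ of dimension $45$, giving $\dim\ker d\sigma\geq 50-45=5$.

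For the reverse inequality, I would show the image of $d\sigma$ fills $\langle h\rangle^{\perp}$, equivalently, passing to the orthogonal and using that the coefficients of $\bar f$ in the basis of $H^0(\mathcal{O}_Z)$ are all nonzero for generic $(g,Z)$, that the subspace
\[
I_{2Z}(3):=\{\phi\in I_Z(3):d\phi|_{p_i}=0\ \forall i\}
\]
of cubics on $\PP^5$ singular at each $p_i$ is one-dimensional, spanned by $h$. I would prove this in two steps. \textbf{Step 1:} if $F\in I_{2Z}(3)$ then $g_F=m(F)\in S^6W^{*}$ is a plane sextic singular at the ten points of $Z\subset \Sigma=\PP^2$; since ten points in general position in $\PP^2$ impose $30>28=\dim S^6W^{*}$ independent jet conditions on sextics, one forces $g_F=0$, hence $F\in I_\Sigma(3)$. \textbf{Step 2:} for $F\in I_\Sigma(3)$, the condition ``$F$ singular at $p$'' translates via the sequence $0\to\mathcal{I}_\Sigma^{2}\to\mathcal{I}_\Sigma\to \mathcal{N}^{*}_\Sigma\to 0$ into the vanishing at $p$ of the associated section $s_F\in H^0(\Sigma,N^{*}_\Sigma(3))$. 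The kernel of $I_\Sigma(3)\to H^0(N^{*}_\Sigma(3))$ is $\langle h\rangle$, since $h$ is, up to scalar, the unique cubic vanishing to order two along all of $\Sigma$; by Riemann--Roch $h^0(N^{*}_\Sigma(3))=27$, so vanishing at ten points in general position on $\Sigma$ imposes $30$ independent conditions and cuts out the zero subspace. Consequently the preimage in $I_\Sigma(3)$ of $H^0(N^{*}_\Sigma(3)\otimes\mathcal{I}_Z)=0$ is exactly $\langle h\rangle$, yielding $I_{2Z}(3)=\langle h\rangle$ and therefore $\dim T_{[Z]}VSP(F,10)=5$.

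The main obstacle is justifying that the two jet-surjectivity statements above hold uniformly for \emph{every} $[Z]\in S_f$, not merely for a general one. For generic $g$ the $2$-dimensional K3 surface $S_f\subset \text{Hilb}_{10}(\PP^2)$ should avoid the codimension $\geq 2$ loci in the Hilbert scheme where either the sextic conditions or the $N^{*}_\Sigma(3)$-conditions fail to be independent. I would handle this by choosing $g$ general and invoking upper semicontinuity of $h^0$, or alternatively by exhibiting an explicit $g$ for which both conditions hold uniformly along $S_f$ and specializing. Combined with the lower bound already established, this yields the embedding dimension exactly $5$ at every point of $S_f$ for general $g$.
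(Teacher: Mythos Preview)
Your approach differs substantially from the paper's. The paper argues via the smooth universal family $\mathcal{VSP}\subset\text{Hilb}_{10}(\PP(V))\times\PP(S^3V)$: the second projection $pr_2$ has corank exactly $1$ at every $([Z],f)$ with $[Z]\in S_f$, so the fiber tangent space has dimension $59-54=5$. Corank $\geq 1$ is your lower bound (the image lies in $K=\langle h\rangle^\perp$, proved in Lemma~\ref{leencore}). For corank $\leq 1$ the paper simply notes that the family $\mathcal{S}\to D_{V-ap}^0$ of K3 surfaces $S_f$ has smooth fibers over a smooth base, hence is a smooth morphism, so $pr_{2*}$ restricted to $T_{\mathcal{S}}$ already surjects onto $K$. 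This works uniformly at every point of $S_f$ as soon as $S_f$ is smooth, with no case analysis.

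Your direct route---showing the space of cubics singular along $Z$ is exactly $\langle h\rangle$---is equivalent to the corank statement via Terracini's lemma. Your Step~1 in fact follows for every reduced $[Z]\in S_f$ from the smoothness of $VSP(C,10)$ at $[Z]$ (the converse of Lemma~\ref{singvsp}): a plane sextic singular along $Z$ would force $S_g$ to be singular there. But Step~2 is a genuine extra burden: you need $H^0(N^*_\Sigma(3)\otimes\mathcal{I}_Z)=0$ for \emph{every} $[Z]\in S_f$, and your semicontinuity sketch does not close this---the bad locus in $\text{Hilb}_{10}(\PP^2)$ has some unspecified codimension, and $S_g$ is not a generic surface in that Hilbert scheme but one determined by $g$. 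Your argument also tacitly assumes $Z$ reduced (the identification of the annihilator with $I_{2Z}(3)$ uses this). The paper's family argument sidesteps both issues entirely; what your approach would buy, if completed, is a more explicit description of the corank-one direction, but the paper already extracts this from Lemma~\ref{leencore}.
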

\begin{proof} We know that the universal family $\mathcal{VSP}$ is
smooth and that the hypersurface $\mathcal{V}SP_{V-ap}$ contains
the family of surfaces $\mathcal{S}$ which has generically smooth
fibers. If $S_f$ is smooth, the corank of the map
$pr_{2*}:T_{\mathcal{VSP},([Z],[f])}\rightarrow T_{\PP(S^3V),[f]}$ is
$1$ everywhere along $S_f$.  This implies that the
embedding dimension of $VSP(F,10)$  is $5$ at any point of $S_f$.
\end{proof}
This lemma shows that for general $g$ and $f=s(g)$, the variety
$VSP(F,10)$ has locally hypersurface singularities along $S_f$, and
our goal now is to show  that the Hessian of the local defining
equation, which is a homogeneous quadratic polynomial on the normal
bundle $N_{S_f}$, is everywhere nondegenerate. Here the bundle
$N_{S_f}$ is defined as the quotient of $T_{VSP(F,10)\mid S_f}$ by
its subbundle $T_{S_f}$. The bundle $N_{S_f}$   is thus  locally
free of rank $3$ by Lemma \ref{leembdim}.

We first have the following:
\begin{lemma}\label{ledettrivial} The determinant of $N_{S_f}$ is
trivial.
\end{lemma}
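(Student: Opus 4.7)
The plan is to realize $VSP(F,10)$ as the zero scheme $V(\sigma_f)\subset U$ of the section $\sigma_f$ of the rank-$46$ bundle $\mathcal{E}$ on $U\subset\mathrm{Hilb}_{10}(\PP^5)$ (described after Proposition \ref{allapolar}), compute $\det N_{S_f}$ from the resulting normal bundle exact sequence, and trivialize the answer using the discriminant cubic $h$.

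Since $S_f$ is a $K3$ surface and $T_{VSP(F,10)}|_{S_f}$ is locally free of rank $5$ by Lemma \ref{leembdim}, the sequence $0\to T_{S_f}\to T_{VSP(F,10)}|_{S_f}\to N_{S_f}\to 0$ gives $\det N_{S_f}\cong\det T_{VSP(F,10)}|_{S_f}$. Along $S_f$ the map $d\sigma_f\colon T_U|_{S_f}\to\mathcal{E}|_{S_f}$ has kernel of rank $5$, one more than generic, so its cokernel is a line bundle $Q$, yielding the four-term exact sequence
\[
0\to T_{VSP(F,10)}|_{S_f}\to T_U|_{S_f}\xrightarrow{d\sigma_f}\mathcal{E}|_{S_f}\to Q\to 0.
\]
Taking determinants and using the lci adjunction $\omega_{VSP(F,10)}\cong(\omega_U\otimes\det\mathcal{E})|_{VSP(F,10)}$, I obtain $\det T_{VSP(F,10)}|_{S_f}\cong\omega_{VSP(F,10)}^{-1}|_{S_f}\otimes Q$. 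Since for generic smooth $F'$ the fourfold $VSP(F',10)$ is hyperk\"ahler with $\omega\cong\mathcal O$, and since hyperk\"ahler fourfolds have $h^{0,1}=0$, a standard Picard-in-families argument applied to the universal $\mathcal V SP\to\PP(S^3V)$ shows that $(\omega_U\otimes\det\mathcal E)|_{\mathcal V SP}$ is pulled back from the base over the open set of smooth fibres, hence fibrewise trivial, and so $\omega_{VSP(F,10)}\cong\mathcal{O}$ for our special $F$ as well. Thus $\det N_{S_f}\cong Q$.

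It remains to trivialize $Q$. The dual line subbundle $Q^{*}\subset\mathcal{E}^{*}|_{S_f}$ has fibre $Q^{*}_{[Z]}\subset I_Z(3)$ consisting of those $q\in I_Z(3)$ satisfying $\langle du(q),f\rangle =0$ for every $u\in T_U|_{[Z]}$. Take $q$ to be the discriminant cubic $h\in S^3V^{*}$ of conics in $\PP(W^{*})$, which vanishes to order $2$ along $\Sigma\subset\PP(V)$. For $[Z]\in S_f$ we have $Z\subset\Sigma\subset V(h)$ so $h\in I_Z(3)$; moreover $h\in\mathcal{I}_\Sigma^{2}\subset\mathcal{I}_Z^{2}$, so $u(h)=0$ in $\mathcal{O}_Z(3)$ for every $u\in\mathrm{Hom}(\mathcal{I}_Z,\mathcal{O}_Z)$, exactly as in the computation used in the proof of Lemma \ref{leencore}; hence $h\in Q^{*}_{[Z]}$. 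Since $h\neq 0$ in $S^3V^{*}$, it is a nowhere-vanishing global section of $Q^{*}$, so $Q\cong\mathcal{O}_{S_f}$ and $\det N_{S_f}\cong\mathcal{O}_{S_f}$.

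The main obstacle is the universal-family argument for triviality of $\omega_{VSP(F,10)}$ on the special singular fibre, rather than only on generic hyperk\"ahler ones; once this is granted, the rest is a direct K-theoretic computation together with recognizing the discriminant cubic as the distinguished global section trivializing $Q^{*}$.
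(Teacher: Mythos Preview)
Your proof is correct and follows essentially the same route as the paper: realize $VSP(F,10)$ as the zero locus of $\sigma_f$, use the four-term exact sequence $0\to T_{VSP}|_{S_f}\to T_U|_{S_f}\to\mathcal{E}|_{S_f}\to Q\to 0$ together with the hyperk\"ahler triviality of $\omega_{VSP(F',10)}$ for generic $F'$ (extended to the special fibre via a family argument), and then trivialize the rank-one cokernel $Q$ by observing that the discriminant cubic $h$, being singular along $\Sigma\supset Z$, gives a nowhere-vanishing section of $Q^*$. The paper invokes the flatness of $\mathcal{V}SP\to\PP(S^3V)$ for the specialization step where you invoke $h^{0,1}=0$ and a Picard-in-families argument; both amount to the same thing since the relevant line bundle lives on the total space.
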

\begin{proof} We recall that by Proposition \ref{propapdivapolar}, $VSP(F,10)$ is defined as the following
set:
\begin{eqnarray} \label{eqvspdef}VSP(F,10)=\{[Z]\in
\text{Hilb}_{10}(\PP(V)),\,I_Z(3)\subset H_f\}.
\end{eqnarray}
The variety $VSP(F,10)$ is contained in the smooth part
of $\text{Hilb}_{10}(\PP(V))$ and defined according to
(\ref{eqvspdef}) as the $0$-locus of a section $\sigma$ of the bundle
$\mathcal{F}$ with fiber $I_z(3)^*$ over the point $[Z]\in
\text{Hilb}_{10}(\PP(V))$. More precisely, since we assumed that
$[f]\in D_{V-ap}^0$, the conclusion of Proposition
\ref{propapdivapolar} holds and thus $VSP(F,10)$ is contained in the
open set of $\text{Hilb}_{10}(\PP(V))$ where $\mathcal{F}$ is
locally free.  In particular, $\mathcal{V}SP\rightarrow
\PP(S^3V)$ is flat over a neighborhood of $[f]$.
 For a general $f\in S^3V$, we know by \cite{IR} that
$VSP(F,10)$ is a smooth Hyper-K\"{a}hler manifold, hence in particular
has  trivial canonical bundle.
This means that the line bundle
$$\text{det}\,(T_{\text{Hilb}_{10}(\PP^5)\mid
VSP(F,10)})\otimes (\text{det}\,\mathcal{F})^{-1}$$ has trivial
restriction to $VSP(F,10)$, which implies that it has trivial
restriction to $VSP(F,10)$ when $f$ is a general cubic apolar to a
Veronese surface, since $\mathcal{V}SP\rightarrow
\PP(S^3V)$ is flat at $[f]$.

On the other hand, the proof of Lemma \ref{leembdim} shows that the
cokernel of the differential $d\sigma$ along $S_f$ is the trivial
line bundle with fiber $\text{Hom}\,(\CC h,S^3V^*/H_f)$ at any point
$[Z]$ of $S_f$.

The exact sequence
$$0\rightarrow T_{VSP(F,10)\mid S_f}\rightarrow
T_{\text{Hilb}_{10}(\PP^5)\mid S_f} \rightarrow
\mathcal{F}_{\mid S_f}\rightarrow
\text{Coker}\,d\sigma\rightarrow0$$ thus
 implies the triviality of $\text{det}\,T_{VSP(F,10)\mid S_f} $, hence
 the triviality of $\text{det}\,N_{S_f}$
 since
$\text{det}\,T_{S_f}$ is trivial.
\end{proof}
Using the fact that the cokernel of the map $d\sigma$ is the trivial
line bundle on $S_f$, we conclude that the Hessian of $\sigma$ is a
section of $S^2N_{S_f}^*$. Here we use the following notion of Hessian for a section
$\sigma$ of a vector bundle $E$ of rank $r$ on a smooth variety $Y$, at a point $y$ where
$d\sigma$ is not of maximal rank. The Hessian is then intrinsically an
element of $({\rm Coker}\,d\sigma_y)\otimes S^2\Omega_{Y,y,\sigma}$, where
$\Omega_{Y,y,\sigma}=({\rm Ker}\,d\sigma_y)^*$. (Note that $d\sigma_y:T_{Y,y}\rightarrow E_y$ is not intrisically defined but ${\rm Ker}\,d\sigma_y$ and ${\rm Coker}\,d\sigma_y$ are.)
This Hessian is related to the usual Hessian as follows:
In an adequate local trivialization of $E$ near $y$, $\sigma$ is given by a $r$-tuple
$(\sigma_1,\ldots,\sigma_r)$
of functions on $Y$, and we can assume that if $k$ is the rank of $d\sigma$ at $y$,
then $d\sigma_1,\ldots, d\sigma_k$ are independent at the point $y$,
while $d\sigma_{k+1},\ldots, d\sigma_r$ vanish at $y$. Let $Y'$ be the smooth codimension $k$
submanifold of $Y$ defined by $\sigma_i,\,i\leq k$. Then
$\Omega_{Y,y,\sigma}=\Omega_{Y',y}$ and the restriction $\sigma_{\mid Y'}$ has zero differential
at $y$. Then the Hessian of $\sigma$ at $y$ is
the $(r-k)$-tuple of quadratic forms
$({\rm Hess}(\sigma_{k+1\mid Y'}),\ldots,{\rm Hess}(\sigma_{r\mid Y'}))$.
If furthermore we know that the vanishing locus of $\sigma$ has ordinary quadratic singularities
along a submanifold $Z\subset Y$, then near $y$, we have $Z\subset Y'$ and the Hessians
 ${\rm Hess}(\sigma_{i\mid Y'})\in S^2\Omega_{Y',y}$ appearing above  in fact belong to
$S^2N_{Z/Y'}^*$. In our case, $Z$ is $S_f$ and what we denoted by $N_{S_f}$ is naturally isomorphic
to $N_{Z/Y'}$.

As the determinant of $N_{S_f}$ is
trivial, the Hessian of  $\sigma$ as a
section of $S^2N_{S_f}^*$ is a nondegenerate quadric everywhere along $S_f$ if and
only if it is nondegenerate generically along $S_f$. The last
property can be shown as follows: Recall that $f$ is a generic cubic
apolar to a Veronese surface and $[Z]\in S_f$. The pair $([Z],[f])$ can be
constructed starting from a general subscheme of length $10$ of the
Veronese surface $\Sigma$, and taking for $H_f$ a general hyperplane
of $S^3V^*$ containing  $I_Z(3)$. Take for $Z$ a reduced subscheme
consisting of ten distinct points $x_1,\ldots,x_{10}$ in general
position on $\Sigma$. Then the hyperplane $H_f$ is determined by a linear form
$p:S^3V^*\rightarrow  S^3V^*/H_f$.   This form is the composite of the
projection map $S^3V^*\rightarrow  S^3V^*/I_Z(3)$ and a linear
form
$$p':H^0(\mathcal{O}_Z(3))\rightarrow \CC.$$
After trivialization of $\mathcal{O}_Z(3)$ we may write
$$p'=\sum_ip_i \text{ev}_{x_i}$$
for some scalars $p_i$ which can be chosen arbitrarily. Recalling
that the cokernel of $d\sigma$ is generated by
$\text{Hom}\,(\CC h,S^3V^*/H_f)$, it is clear that the Hessian
$\text{Hess}(\sigma)$ at the point $[Z]$ is obtained by restricting the sum $\sum_ip_id^2h_{x_i}$ to
$$N_{S_f,[Z]}\subset H^0(N_{\Sigma/\PP^5\mid Z})=\oplus_i
N_{\Sigma/\PP^5,x_i}.$$
 Here
we use the same trivialization  of $\mathcal{O}_Z(3)$ as above to see the
Hessian $d^2h_{x_i}$ of $h$ at $x_i$ as an element of
 $S^2N_{\Sigma/\PP^5,x_i}^*$.
 Since $h$ has nondegenerate quadratic singularities along $\Sigma$,
 each of the quadrics
$d^2h_{x_i}$ is nondegenerate. We now have:
\begin{lemma}\label{leorthog} The $3$-dimensional vector space $N_{S_f,[Z]}$ is
the orthogonal complement of
the subspace $\text{Im}\,(H^0(\Sigma,N_{\Sigma/\PP^5})\rightarrow
\oplus_iN_{\Sigma/\PP^5,x_i})$ with respect to the quadratic form $\sum_ip_id^2h_{x_i}$.
\end{lemma}
\begin{proof} Indeed, the space
$N_{S_f,[Z]}$ is equal to the kernel of the composite map
$$H^0(N_{\Sigma/\PP^5\mid Z})\rightarrow
\text{Hom}\,(I_\Sigma(3),H^0(\mathcal{O}_Z(3))\stackrel{p'}{\rightarrow}
\text{Hom}\,(I_\Sigma(3),S^3V^*/H_f),$$ where $ H^0(N_{\Sigma/\PP^5\mid
Z})\cong \oplus_iN_{\Sigma/\PP^5,x_i}$ and $p'=\sum_ip_i\text{ev}_{x_i}$.

 Let now $u\in H^0(\Sigma, N_{\Sigma/\PP^5}), u_{\mid Z}=(u_i)$ and $v=(v_i)\in H^0(
N_{\Sigma/\PP^5\mid Z})$. Then
$$(\sum_ip_id^2h_{x_i})(u_{\mid Z},v_{\mid Z})=\sum_ip_i
d^2h_{x_i}(u_{i},v_{i}).$$ The section $u\in H^0(\Sigma, N_{\Sigma/\PP^5})$
lifts to a section $U\in H^0(\PP^5,T_{\PP^5})$.  Let $d_U:S^3V^*\to S^3V^*$ be the induced map on cubic forms. Then
the degree $3$ polynomial $d_U(h)$ belongs to $I_\Sigma(3)$.
Furthermore we have
$$d^2h_{x_i}(u_{i},v_{i})=d(d_U(h))(v_i)$$ for any $i$. It follows
that
$$\sum_ip_i
d^2h_{x_i}(u_{i},v_{i})=\sum_ip_id(d_U(h))(v_i).$$ If now
 $(v_i)$
belongs to $N_{S_f,[Z]}$, we find that $\sum_ip_id(d_U(h))(v_i)=0$ and
thus $$\sum_ip_i d^2h_{x_i}(u_{i},v_{i})=0.$$ Hence we proved that
$\text{Im}\,(H^0(\Sigma,N_{\Sigma/\PP^5})\rightarrow
\oplus_iN_{\Sigma/\PP^5,x_i})$ is perpendicular with respect to
$\sum_ip_id^2h_{x_i}$ to the space $N_{S_f,[Z]}$. As the space
$H^0(\Sigma,N_{\Sigma/\PP^5})$ is of dimension $27$, the map
$H^0(\Sigma,N_{\Sigma/\PP^5})\rightarrow \oplus_iN_{{\Sigma/\PP^5},x_i}$ is
injective of maximal rank $27$ for a general choice of the $x_i$'s.
As  the space $N_{S_f,[Z]}$ is of dimension $3$,  we conclude that
$$\text{Im}\,(H^0(\Sigma,N_{\Sigma/\PP^5})\rightarrow
\oplus_iN_{{\Sigma/\PP^5},x_i})$$
 is the orthogonal complement with respect
to $\sum_ip_id^2h_{x_i}$ of  the space $N_{S_f,[Z]}$, since the quadratic
form $\sum_ip_id^2h_{x_i}$ on the $30$-dimensional vector space
$\oplus_iN_{{\Sigma/\PP^5},x_i}$ is nondegenerate.
\end{proof}

It follows that the quadratic form $\text{Hess}(\sigma)$, that is the
restriction of $\sum_ip_id^2h_{x_i}$ to $N_{S_f,[Z]}$, is nondegenerate
if and only if the quadratic form  $\sum_ip_id^2h_{x_i}$ has a
nondegenerate restriction to
$\text{Im}\,(H^0(\Sigma,N_{\Sigma/\PP^5})\rightarrow
\oplus_iN_{{\Sigma/\PP^5},x_i})$. The last property may be achieved   because  the
points $x_i$ being general, the map
$$H^0(\Sigma,N_{\Sigma/\PP^5})\rightarrow
\oplus_{1\leq i\leq 9}N_{{\Sigma/\PP^5},x_i}$$ is injective (hence an
isomorphism). Hence any combination $\sum_{1\leq i\leq
9}p_id^2h_{x_i}$ with $p_i\not=0$ for any $i\leq9$ has a
nondegenerate restriction to
$\text{Im}\,(H^0(\Sigma,N_{\Sigma/\PP^5})\rightarrow \oplus_iN_{{\Sigma/\PP^5},x_i})$
and thus  a general  combination $\sum_{1\leq i\leq 10}p_id^2h_{x_i}$
 has a nondegenerate restriction to
$\text{Im}\,(H^0(\Sigma,N_{\Sigma/\PP^5})\rightarrow
\oplus_iN_{{\Sigma/\PP^5},x_i})$.

In conclusion, we proved that, for general $g$ and $f=s(g)$, at a general point
$[Z]\in S_f=S_g\subset VSP(F,10)$, the Hessian of the local defining
equation of $VSP(F,10)$ has rank $3$, and as explained above, this
implies that it is everywhere nondegenerate in the  normal direction
to $S_f$.
\end{proof}
\section{Proof of Theorem \ref{main}}
We first recall the statement of the result:
\begin{theorem} \label{theofin}Let $F$ be a very general cubic fourfold. Then there is
no nonzero morphism of Hodge structures between
$H^4(F,\QQ)_{prim}$ and $H^2(VSP(F,10),\QQ)_{prim}$.
\end{theorem}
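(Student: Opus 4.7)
My approach is by contradiction. Suppose that for a very general $F$ there exists a nonzero morphism of Hodge structures $\alpha\colon H^4(F,\QQ)_{prim} \to H^2(VSP(F,10),\QQ)_{prim}$. The first step is to spread $\alpha$ out over a dense Zariski open $\mathcal{M}^\circ\subset\mathcal{M}$: since Hodge classes in the variation of Hodge structure whose fiber over $[F]$ is $\Hom(H^4(F,\QQ)_{prim},H^2(VSP(F,10),\QQ)_{prim})(1)$ are parameterized by a countable union of closed algebraic subsets of $\mathcal{M}$ (algebraicity of Hodge loci, \cite{voisinhodgeloci}), the very general existence of $\alpha$ propagates to a morphism of variations of Hodge structure on all of $\mathcal{M}^\circ$.

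The second step exploits the simplicity of the generic cubic period. For very general $F$, the Mumford-Tate group of $H^4(F,\QQ)_{prim}$ is the full special orthogonal group of the natural polarization, so this Hodge structure is simple and $\alpha$ must be injective. Since $H^4(F,\QQ)_{prim}$ and $H^2(VSP(F,10),\QQ)_{prim}$ have the same rank $22$ and the same Hodge type $(1,20,1)$ (after a Tate twist), $\alpha$ is in fact an isomorphism of polarized Hodge structures for very general $F$, hence (by constancy of the generic rank of $\alpha$) for every $F\in\mathcal{M}^\circ$.

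I then specialize to $D_{V-ap}\cap\mathcal{M}^\circ$. On the cubic side, Proposition \ref{propnonNL} says that $D_{V-ap}$ is not a Noether-Lefschetz divisor for the moduli of cubics, so for very general $F\in D_{V-ap}$ the space $H^4(F,\QQ)_{prim}$ carries no nonzero Hodge class. On the $VSP$-side, Theorem \ref{propsingintro} supplies an extra Hodge class: the nondegenerate quadratic singularities of $VSP(F,10)$ along the $K3$ surface $S_f$ can be resolved by blowing up the family of $S_f$'s in a neighborhood of a general point of $D_{V-ap}$, producing a resolution $\widetilde{\mathcal{V}SP}\to\mathcal{V}SP$ whose exceptional divisor over $F\in D_{V-ap}$ is a $\PP^1$-bundle $E_f$ over $S_f$. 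The algebraic class $[E_f]\in H^2(\widetilde{VSP(F,10)},\QQ)$ is a Hodge class whose primitive component one checks to be nonzero, forcing the Picard rank of the extension of $H^2$ along $D_{V-ap}$ to jump to at least $2$. Transporting this new Hodge class back to the cubic side via the isomorphism $\alpha$ (lifted to the resolved family) produces a nonzero Hodge class in $H^4(F,\QQ)_{prim}$ for very general $F\in D_{V-ap}$, contradicting Proposition \ref{propnonNL}.

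The main obstacle I anticipate lies in making the third step rigorous: one must justify that $\alpha$ extends to the resolved family $\widetilde{\mathcal{V}SP}\to\mathcal{M}^\circ$ in a way that identifies the algebraic class $[E_f]$ with a Hodge class on the cubic side, and one must show that $[E_f]$ is genuinely linearly independent from the polarization of $VSP(F,10)$. This is precisely the ``second step'' of the strategy described in the introduction: that $D_{V-ap}$ is a Noether-Lefschetz divisor for the VSP-family, meaning that the generic Picard rank of the extension of the degree-$2$ VHS of $VSP(F,10)$ along $D_{V-ap}$ is at least $2$. The nondegeneracy of the quadratic singularities furnished by Theorem \ref{propsingintro} is exactly the ingredient needed to make a single blow-up both resolve the singularities and produce a well-controlled new divisor class.
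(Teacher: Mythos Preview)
Your strategy is essentially the paper's, run as a proof by contradiction rather than directly. Both arguments rest on the same two pillars: Proposition~\ref{propnonNL} (the cubic Hodge structure acquires no extra Hodge class along $D_{V-ap}$) and Theorem~\ref{propsingintro} (the VSP side does, via the exceptional divisor of a resolution). The paper, however, avoids spreading out $\alpha$ and specializing it: instead it builds a smooth proper family $\widetilde{\mathcal{V}SP}\to\widetilde{B}$ whose fibers over $D_{V-ap}$ are the blow-ups of the singular $VSP(F,10)$ along $S_f$, and then checks that at a \emph{single} point $b$ over $D_{V-ap}^0$ no nonzero morphism can exist. Algebraicity of Hodge loci then gives the very general statement for free. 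This direct route sidesteps your ``main obstacle'' of lifting $\alpha$ to the resolved family.

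The two technical points you flag as unresolved are exactly what the paper supplies. First, the smooth family: one takes the double cover of $B^0$ ramified along $D_{V-ap}^0$ so that the total space has ordinary quadratic singularities along $\mathcal{S}$, blows up $\mathcal{S}$, then passes to an \'etale double cover trivializing the two rulings of the exceptional $\PP^1\times\PP^1$-bundle and performs a small contraction. Second, the independence of $[E_b]$ from the polarization: rather than arguing with $[E_b]$ directly, the paper shows that the restriction $H^2(\widetilde{\mathcal{V}SP}_b,\QQ)_{tr}\to H^2(E_b,\QQ)_{tr}\cong H^2(S_b,\QQ)_{tr}$ is injective (the holomorphic symplectic form cannot vanish on the divisor $E_b$, and the transcendental Hodge structure is simple), whence $h^{1,1}(\widetilde{\mathcal{V}SP}_b)_{tr}\le h^{1,1}(S_b)_{tr}\le 19$. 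Since $H^4(\mathcal{X}_b,\QQ)_{prim}$ is simple with $h^{2,2}=20$ and contains no Hodge class, any nonzero morphism to $H^2(\widetilde{\mathcal{V}SP}_b,\QQ)_{prim}$ would have to land injectively in the transcendental part, which is impossible by the Hodge number count. This replaces your contradiction ``transport a Hodge class back'' with a direct dimension obstruction at the chosen point.
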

\begin{proof} Let $B$ be the Zariski open set of
$\PP(H^0(\PP^5,\mathcal{O}_{\PP^5}(3)))$ parameterizing smooth
cubics. We have the universal family $\pi: \mathcal{X}\rightarrow B$
of cubic hypersurfaces, where the morphism $\pi$ is smooth and
projective. We also have the family $\pi':\mathcal{V}SP\rightarrow
B$ which is projective over $B$ but is not smooth. By Proposition \ref{exampleveronese} the general cubic fourfold apolar to a Veronese surface is smooth, so the base $B$
contains the divisor $D_{V-ap}$ parameterizing smooth cubic fourfolds
apolar to a Veronese surface. We proved in Theorem \ref{propsing}
that for $[f]$ in an open subset $ D_{V-ap}^0$, the fiber
$VSP(F,10)={\pi'}^{-1}([f])$ has only ordinary
quadratic singularities along the surface $S_f$ which is a smooth
$K3$ surface. Let $[f]$ be a point of $D_{V-ap}^0$ and let $B^0$ be a
Zariski open set of $B$ containing $[f]$ and such that $D_{V-ap}\cap
B^0\subset D^0_{V-ap}$. Let $B'\rightarrow B^0$ be the double cover
ramified along $D^0_{V-ap}$. Since $D^0_{V-ap}$ is contained in the smooth locus of $D_{V-ap}$ (cf. Lemma \ref{leencore}), the double cover, $B'$, is smooth, and the
pulled-back family $\tilde{\pi}':\mathcal{V}SP'\rightarrow B'$ is
smooth except along the family of surfaces $\mathcal{S}\rightarrow
D^0_{V-ap}$, which has codimension $3$ in $\mathcal{V}SP'$, and
along which $\mathcal{V}SP'$ has  quadratic nondegenerate
singularities. The family $\mathcal{V}SP'\rightarrow B'$ can be
modified after passing to a degree $2$ \'{e}tale cover of $B'$ to a
family of smooth complex projective manifolds  by a small
resolution: For this we first blow-up $\mathcal{V}SP'$ along
$\mathcal{S}$ to get $\mathcal{V}SP''\rightarrow B$. The exceptional
divisor $E$ of the blow-up is a bundle over $\mathcal{S}$ with
fibers smooth two-dimensional quadrics. There is an \'{e}tale double
cover $\widetilde{\mathcal{S}}\rightarrow \mathcal{S}$
parameterizing the rulings in the fibers of $E\rightarrow
\mathcal{S}$. As a $K3$ surface is simply connected, this double
cover comes from a double cover $\widetilde{D_{V-ap}^0}\rightarrow
D_{V-ap}^0$. We may assume this \'etale double cover is induced by
an \'etale double cover $\widetilde{B}^0\rightarrow B^0$. Performing
this base change, the pulled-back family
$\widetilde{\mathcal{V}SP''}\rightarrow \widetilde{B}^0$ has the
property that the inverse image $\widetilde{E}$ of $E$ admits two
morphisms to a $\PP^1$-bundle over $\widetilde{\mathcal{S}}$. We
choose one of them, and as is well-known, we can contract
$\widetilde{E}$ to $\widetilde{\mathcal{S}}$ along this morphism.
The resulting family $\phi:\widetilde{\mathcal{V}SP}\rightarrow
\widetilde{B}^0$ is smooth proper over $\widetilde{B}^0$.

We now have two families $$\phi:\widetilde{\mathcal{V}SP}\rightarrow
\widetilde{B},\,\,\psi:\widetilde{\mathcal{X}}\rightarrow \widetilde{B}$$ of
smooth proper complex  manifolds, where
$\widetilde{\mathcal{X}}:=\mathcal{X}\times_{B}\widetilde{B}^0$. The fibers of
both families are projective, and in particular K\"{a}hler, although it
is not clear if both morphisms are projective.
We thus get two associated variations of Hodge structures on
$\widetilde{B}$, one of weight $2$ on the primitive cohomology of
degree $2$ of the fibers of the first family with associated local
system $H^2$, the other of weight $4$ on the primitive cohomology of
degree $4$ of the fibers of the second family with associated local
system $H^4$. The locus of points $b\in\widetilde{B}$ where there is
a nonzero morphism of Hodge structures
$H^4(\widetilde{\mathcal{X}}_b,\QQ)_{prim}\rightarrow
H^2(\widetilde{\mathcal{V}SP}_b,\QQ)_{prim}$ is the Hodge
locus for the induced variation of Hodge structure on the local
system $\text{Hom}\,(H^4,H^2)$. The Hodge locus is a countable union
of closed algebraic subsets of the base $\widetilde{B}$ (cf.
\cite{voisinhodgeloci}). In order to prove Theorem \ref{theofin}, it
thus suffices to prove that there is a point of $\widetilde{B}$
where there is no nonzero morphism of Hodge structures between
$H^4(\widetilde{\mathcal{X}}_b,\QQ)_{prim}$ and
$H^2(\widetilde{\mathcal{V}SP}_b,\QQ)_{prim}$.

By Proposition \ref{propnonNL},  the divisor $D_{V-ap}$ is not a
Noether-Lefschetz locus for the family $\mathcal{X}\rightarrow B$.
This means that there exists a point $b\in D_{V-ap}$, that we may assume to
be in $D^0_{V-ap}$, such that there is no nonzero  Hodge class in
$H^4(\mathcal{X}_b,\QQ)_{prim}$. This fact implies that the
Hodge structure on $H^4(\mathcal{X}_b,\QQ)_{prim}$ is simple.
Indeed, since $h^{3,1}(\mathcal{X}_b)=1$, any proper sub-Hodge
structure has $h^{3,1}$-number $0$ or its orthogonal complement for
the intersection pairing satisfies this property. In both cases, the
existence of a proper sub-Hodge structure implies the existence of a
nonzero Hodge class. Note also that it has $h^{2,2}$-number  equal
to $20$.

On the other hand, we claim that the transcendental part of
$H^2(\widetilde{\mathcal{V}SP}_b,\QQ)_{prim}$ has
$h^{1,1}$-number $\leq 19$. Here the transcendental part is defined
as the minimal sub-Hodge structure containing the
$H^{2,0}$-component.

The claim follows from the fact that $\widetilde{\mathcal{V}SP}_b$
is hyper-K\"{a}hler, being a fiber of a family of K\"{a}hler manifolds whose
general member is hyper-K\"{a}hler, and on the other hand it  is the
blow-up of $\mathcal{V}SP_b$ along the $K3$ surface $S_b$. It thus
contains the exceptional divisor $E_b$ over $S_b$ and the morphism
of Hodge structures
$H^2(\widetilde{\mathcal{V}SP}_b,\QQ)\rightarrow
H^2(E_b,\QQ)$ does not vanish on
$H^{2,0}(\widetilde{\mathcal{V}SP}_b)$ because a symplectic form on
a fourfold cannot vanish on a divisor. On the other hand, this
morphism sends $H^2(\widetilde{\mathcal{V}SP}_b,\QQ)_{tr}$ to
$H^2(E_b,\QQ)_{tr}$ which is equal to
$H^2(S_b,\QQ)_{tr}$. The induced morphism
$$H^2(\widetilde{\mathcal{V}SP}_b,\QQ)_{tr}
\rightarrow H^2(S_b,\QQ)_{tr}$$ must be injective by the same
simplicity argument  as above, and thus
$$h^{1,1}(\widetilde{\mathcal{V}SP}_b,\QQ)_{prim}\leq
h^{1,1}(S_b)_{prim}\leq19.$$
As the Hodge structure on $H^4(\mathcal{X}_b,\QQ)_{prim}$ is simple with
$h^{2,2}$-number equal to $20$, any morphism of Hodge structures
between $H^4(\mathcal{X}_b,\QQ)_{prim}$ and a weight $2$ Hodge structure with
$h^{1,1}$-number $\leq 19$ is identically $0$, which
concludes the proof of Theorem \ref{main}.

\end{proof}

  \end{document}